\pdfoutput=1
\documentclass[11pt,reqno, a4paper]{amsart}
\usepackage{textcomp, mathrsfs}
\usepackage[expansion=false]{microtype}
\usepackage[english]{babel}
\usepackage{amsthm,amsmath}
\usepackage{amssymb}
\numberwithin{equation}{section}
\usepackage[dvipsnames]{xcolor}

\usepackage[T1]{fontenc}
\usepackage[utf8]{inputenc}
\usepackage{mathtools,apptools}
\usepackage[top=.8in, bottom=.8in, left=0.8in, right=0.8in]{geometry}
\usepackage{fancyhdr}
\usepackage{comment}
\usepackage[backref=page]{hyperref}
\hypersetup{breaklinks=true,hypertexnames=false,colorlinks=true,urlcolor=ForestGreen,citecolor=ForestGreen,linkcolor=blue!70!green,bookmarksnumbered,bookmarksopen}

\usepackage{graphicx}
\usepackage{subcaption}
\usepackage{booktabs}

\newtheorem{theorem}{Theorem}[section]
\newtheorem{corollary}[theorem]{Corollary}
\newtheorem{proposition}[theorem]{Proposition}
\newtheorem{definition}[theorem]{Definition}
\newtheorem{lemma}[theorem]{Lemma}
\newtheorem{remark}[theorem]{Remark}

\title[Nonlocal Tikhonov Regularization]{Nonlocal Tikhonov Regularization: Hilbert Scales, Explicit Rates, and the Classical Limit}
\author{Debangana Mukherjee}
\address{Department of Mathematics, School of Interwoven Arts and Science (SIAS), Krea University, Sricity, India}
\email{debangana.mukherjee@krea.edu.in}
\author{Akash Ashirbad Panda}
\address{Department of Mathematics, IIT Bhubaneswar, Bhubaneswar, India}
\email{akashpanda@iitbbs.ac.in}

\begin{document}

\begin{abstract}
We study fractional-Sobolev Tikhonov regularization for linear inverse
problems on a bounded Lipschitz domain. The regularization penalty is
generated by the restricted Dirichlet fractional Laplacian, and the
associated variational problem is shown to admit a unique minimizer that
depends Lipschitz continuously on the data. Identifying the positive
self-adjoint operator
$$A_s=I+(-\Delta)^s,\, D(A_s^{1/2})=H_0^s(\Omega),$$
we transform the problem isometrically into a classical Hilbert-space
Tikhonov problem with observation operator
$B=KA_s^{-1/2}$. This yields explicit mean-square error bounds and an
order-optimal \emph{a priori} and \emph{a posteriori} parameter rules
under H\"older-type source
conditions.

The framework is illustrated by partial observations and by the backward
fractional heat equation. In the latter case,
$$ B^*B=A_s^{-1}e^{-2tA_s}, $$
which permits a mode-wise description of the source condition, the
singular-value decay, and the effective reconstruction bandwidth. We also
study the local limit $s\to1^-$: after Bourgain--Brezis--Mironescu
normalization, the fractional functionals $\Gamma$-converge in
$L^2(\Omega)$ to the classical $H_0^1$-Tikhonov functional, and the
corresponding minimizers converge strongly in $L^2(\Omega)$. Numerical
experiments for the backward fractional heat problem illustrate the
reconstruction procedure and the influence of the penalty order, and
confirm the predicted mean-square convergence rate to within a few
percent via Monte Carlo simulation, with Morozov's discrepancy
principle attaining the same order-optimal rate a posteriori.
\end{abstract}

\maketitle

\noindent
\textbf{Keywords:}
Fractional Tikhonov regularization;
inverse problems;
Hilbert-scale regularization;
fractional Laplacian;
Morozov discrepancy principle;
source conditions;
backward fractional heat equation.

\smallskip

\noindent
\textbf{Mathematics Subject Classification (2020):} Primary:
47A52,      
65J20,      
35R11.      
Secondary:
35S15,      
47A53.      


\section{Introduction}

\subsection*{Background and motivation}

Inverse problems seek to recover an unknown state from indirect and noisy
measurements, typically modeled by
\[
Ku=y^\delta,
\]
where $K$ is a bounded linear forward operator. When $K$ is compact or
smoothing, inversion is unstable and regularization is required; see
\cite{TikhonovArsenin77,EHN96,Vogel02,Kirsch11,KaipioSomersalo05,Scherzer09}.
Among the available approaches, Tikhonov regularization remains the most
widely used, combining a variational formulation with a quantitative
convergence theory that provides explicit reconstruction error estimates.
Classically, regularization is imposed through the
$H_0^1(\Omega)$-norm, thereby identifying the Laplacian as the operator
governing smoothness. While appropriate for locally diffusive phenomena,
this choice is less natural in applications such as anomalous transport,
image processing, peridynamics, finance, and nonlocal continuum mechanics,
where the underlying dynamics are intrinsically nonlocal. In these settings,
the natural energy spaces are fractional Sobolev spaces, whose Dirichlet
forms involve nonlocal interactions over $\Omega\times\Omega$ rather than
pointwise gradients; see
\cite{DiNezza12,CaffarelliSilvestre07,Kwa06,Lischke20}. This motivates
replacing the classical $H_0^1(\Omega)$-penalty by the fractional energy on
$H_0^s(\Omega)$, $0<s<1$, thereby aligning the regularization with the
nonlocal structure of the model.

Two terminological remarks are in order. First, the phrase
\emph{fractional Tikhonov regularization} already appears in numerical linear
algebra, where it refers to fractional-power modifications of the filter
function in discrete Tikhonov regularization
\cite{HochstenbachReichel2011,GerthKlannRamlauReichel2015,HochstenbachNoscheseReichel2015}.
Here, ``fractional'' instead refers to the order of the Sobolev space
$H_0^s(\Omega)$ defining the regularization penalty. Second, the
\emph{integral} (restricted) fractional Laplacian considered throughout this
paper is, in general, different from the \emph{spectral} fractional power of
the classical Dirichlet Laplacian; see
\cite{Kwa06,Lischke20}. Since these operators possess different analytical
and numerical properties, this distinction is essential. In particular, the
numerical scheme developed in
Section~\ref{sec:numerical-experiments} discretizes the integral fractional
Laplacian directly.

\subsection*{A motivating inverse problem}

A principal application throughout the paper is the backward fractional heat
equation. Let
\[
A_s=I+(-\Delta)^s
\]
denote the restricted Dirichlet fractional operator and suppose that
\[
y^\delta=e^{-tA_s}u^\dagger+\eta^\delta,
\qquad t>0.
\]
Since the semigroup $e^{-tA_s}$ exponentially attenuates high-frequency
components, recovering $u^\dagger$ from $y^\delta$ is a severely ill-posed
inverse problem. A distinctive feature of this model is that the same
operator $A_s$ governs both the forward evolution and the regularization
penalty. This structural compatibility allows the abstract Hilbert-scale
theory developed in Section~\ref{sec:statistical-error} to be made
completely explicit in Section~\ref{sec:application}, distinguishing this
example from the more general partial-observation problem considered
alongside it.

\subsection*{Analytical framework and relation to the literature}

The abstract theory of Hilbert-scale Tikhonov regularization was developed
by Natterer \cite{Natterer1984}, Neubauer
\cite{Neubauer1988}, and Nair, Pereverzev, and Tautenhahn
\cite{NairPereverzevTautenhahn2005}; see also
\cite{Hegland95,EnglNeubauer1999,Tautenhahn96}. This framework assumes a
positive self-adjoint operator generating the Hilbert scale and derives
source conditions and convergence rates relative to it. For a specific
nonlocal penalty, however, one must still identify the underlying operator
and characterize its domain. In our setting, this amounts to identifying
\[
A_s=I+(-\Delta)^s,
\qquad
D(A_s^{1/2})=H_0^s(\Omega),
\]
which provides the analytical foundation for the Hilbert-scale
reformulation developed in
Section~\ref{sec:statistical-error}.

Fractional Sobolev penalties have previously been considered in inverse
problems. A\ss{}mann and R\"osch \cite{AssmannRosch2015} established convergence
results under Sobolev source conditions, while Antil, Di, and Khatri
\cite{AntilDiKhatri2020} incorporated fractional Laplacian regularization
into a bilevel deep-learning framework for tomography. Related nonlocal
regularization techniques also appear in image processing
\cite{GilboaOsher2008}. Relative to these works, our contribution is an
explicit Hilbert-scale realization for the restricted fractional Laplacian,
together with order-optimal \emph{a priori} and Morozov
\emph{a posteriori} parameter-choice rules under a H\"older-type source
condition in the spirit of Hofmann and Yamamoto
\cite{HofmannYamamoto2010}. The backward fractional heat equation serves as
a representative example in which the forward operator and the
regularization penalty share the same spectral structure, allowing the
resulting convergence rates and effective reconstruction bandwidth to be
described explicitly.

The backward heat problem belongs to the class of severely ill-posed inverse
problems, where the singular values of the forward operator decay
exponentially; see \cite{Mair1994,Hohage2000}. In such problems,
unmatched regularization operators generally yield only logarithmic
convergence rates. By contrast, the matched structure of the present
framework recovers polynomial convergence rates despite the exponential
ill-posedness, and the mechanism behind this improvement becomes explicit at
the level of individual Fourier--$A_s$ modes.

Since the fractional order $s$ is itself a modeling parameter, we also study
the local limit $s\to1^-$ using the Bourgain--Brezis--Mironescu
characterization of $H^1$ as the limit of fractional Sobolev seminorms
\cite{BourgainBrezisMironescu2001,Ponce2004,Davila02}. After the appropriate
normalization, the fractional Tikhonov functionals
$\Gamma$-converge in $L^2(\Omega)$ to the classical
$H_0^1$-Tikhonov functional, and the corresponding minimizers converge
strongly. Thus, the proposed framework provides a continuous bridge between
nonlocal and classical regularization.

Rather than developing a new abstract Hilbert-scale theory, the present work
provides a concrete realization of the existing framework for the
restricted fractional Laplacian. The isometric transformation
\[
z=A_s^{1/2}u,
\qquad
B=KA_s^{-1/2},
\]
reduces the fractional Tikhonov problem to a standard Hilbert-space
formulation, making the classical spectral machinery directly applicable to
a genuinely nonlocal regularization problem.

\subsection*{Contributions}

The main contributions of this work are as follows.
\begin{itemize}
\item[(i)] We establish existence, uniqueness, weak Euler--Lagrange
characterization, and Lipschitz stability of the fractional Tikhonov
minimizer.
\item[(ii)] We identify the positive self-adjoint operator generated by the
fractional Dirichlet form and derive the Hilbert-scale reformulation through
$A_s^{1/2}$.
\item[(iii)] Under the H\"older-type source condition
$A_s^{1/2}u^\dagger=(B^*B)^\beta w$, we obtain explicit mean-square error
bounds and an order-optimal \emph{a priori} parameter choice; we further
show that Morozov's discrepancy principle attains the \emph{same}
order-optimal rate a posteriori, without requiring the smoothness
index $\beta$ to be known in advance.
\item[(iv)] We treat partial observations and the backward fractional heat
equation. For the latter, the identity
\begin{gather*}
B^*B=A_s^{-1}e^{-2tA_s}
\end{gather*}
yields a mode-wise source condition, singular-value asymptotics, and an
explicit effective bandwidth, showing how a matched regularization operator
upgrades the generic logarithmic rates of severely ill-posed problems to a
polynomial one.
\item[(v)] Using the Bourgain--Brezis--Mironescu limit, we prove pointwise
convergence of the energies, $\Gamma$-convergence of the extended
functionals in strong $L^2(\Omega)$, equicoercivity, and convergence of
minimizers as $s\to1^-$.
\item[(vi)] Numerical experiments for the backward fractional heat problem,
based on a direct discretization of the integral fractional Laplacian,
illustrate the reconstruction method, Morozov parameter selection ---
justified theoretically in item (iii) above --- and the role
of the fractional penalty order, and confirm the predicted mean-square
convergence rate to within a few percent via Monte Carlo simulation.
\end{itemize}

\subsection*{Organization of the paper}
Section~\ref{sec:var-form} develops the variational theory.
Section~\ref{sec:statistical-error} gives the Hilbert-scale transformation
and error estimates. Section~\ref{sec:application} treats the two
applications and the spectral structure of backward heat inversion.
Section~\ref{sec:Tikhonov-model} studies the classical limit and
\(\Gamma\)-convergence. Section~\ref{sec:numerical-experiments} presents the
numerical experiments, and Section~\ref{sec:conclusion} concludes the paper.

\section{Variational Analysis of the Fractional Tikhonov Functional}\label{sec:var-form}

Let $\Omega\subset\mathbb R^N$ be a bounded Lipschitz domain and
$s\in(0,1)$. We consider
\begin{equation*}
    y^\delta=Ku^\dagger+\eta^\delta,\, K:H_0^s(\Omega)\to L^2(\Omega),
\end{equation*}
where $K$ is bounded and linear and
$$\mathbb E\|\eta^\delta\|_{L^2(\Omega)}^2\le\delta^2.$$
Recall that \(J_\alpha\) denotes the fixed-\(s\) fractional Tikhonov
functional introduced in the notation subsection \
This section establishes the basic variational properties of the associated
fractional Tikhonov functional using the direct method and standard convex
analysis; see \cite{EkelandTemam99}.

\subsection{Problem formulation}
Since the inverse problem is generally ill posed, stable reconstruction
requires regularization. We therefore consider the fractional Tikhonov
functional, which is the natural analogue of the classical Tikhonov
functional obtained by replacing the standard
$H_0^1(\Omega)$-regularization with the fractional Sobolev norm,
\begin{equation*}
J_\alpha:H_0^s(\Omega)\longrightarrow\mathbb{R},
\,J_\alpha(u)
=\frac12\|Ku-y^\delta\|_{L^2(\Omega)}^2+\frac{\alpha}{2}\|u\|_{H_0^s(\Omega)}^2,
\end{equation*}
where $\alpha>0$ denotes the regularization parameter. The corresponding
variational problem is
\begin{equation}\label{eq:min_problem}
\min_{u\in H_0^s(\Omega)}
J_\alpha(u).
\end{equation}
The norm on $H_0^s(\Omega)$ is given by
\begin{equation*}
\|u\|_{H_0^s(\Omega)}^2=\|u\|_{L^2(\Omega)}^2+[u]_{H^s(\mathbb{R}^N)}^2,
\end{equation*}
where
$$[u]_{H^s(\mathbb{R}^N)}^2=\frac{C_{N,s}}{2}\iint_{\mathbb{R}^{2N}}\frac{|u(x)-u(y)|^2}{|x-y|^{N+2s}}
\,dx\,dy.$$
Throughout the paper, $(-\Delta)^s$ denotes the integral fractional
Laplacian with homogeneous exterior Dirichlet condition, associated
with the above quadratic form.
Using the definition of the fractional Sobolev norm,
$J_\alpha$ admits the equivalent representation
\begin{gather*}
J_\alpha(u)=\frac12\|Ku-y^\delta\|_{L^2(\Omega)}^2+\frac{\alpha}{2}
\|u\|_{L^2(\Omega)}^2+\frac{\alpha C_{N,s}}{4}\iint_{\mathbb{R}^{2N}}\frac{|u(x)-u(y)|^2}
{|x-y|^{N+2s}}
\,dx\,dy.
\end{gather*}
The following proposition establishes the basic analytical properties of
the fractional Tikhonov functional that are required for the subsequent
well-posedness analysis.
\begin{proposition}\label{prop:functional}
The functional $J_\alpha:H_0^s(\Omega)\to\mathbb R$ satisfies
\begin{enumerate}
\item $J_\alpha$ is continuous;
\item $J_\alpha$ is strictly convex;
\item $J_\alpha$ is coercive;
\item $J_\alpha$ is weakly lower semicontinuous.
\end{enumerate}
\end{proposition}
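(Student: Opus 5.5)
The plan is to treat $J_\alpha$ as the sum of two pieces on the Hilbert space $H_0^s(\Omega)$:
\[
F(u)=\tfrac12\|Ku-y^\delta\|_{L^2(\Omega)}^2,\qquad
G(u)=\tfrac{\alpha}{2}\|u\|_{H_0^s(\Omega)}^2 .
\]
Each property is then checked separately, using only the boundedness and linearity of $K:H_0^s(\Omega)\to L^2(\Omega)$ and the fact that $\|\cdot\|_{H_0^s(\Omega)}$ is a Hilbert norm. The latter holds because it comes from the inner product $(u,v)_{L^2}+\frac{C_{N,s}}{2}\iint_{\mathbb R^{2N}}\frac{(u(x)-u(y))(v(x)-v(y))}{|x-y|^{N+2s}}\,dx\,dy$.

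\textbf{Continuity.} I would expand $J_\alpha(u)-J_\alpha(v)$ using $\|a\|^2-\|b\|^2=(a-b,a+b)$. With Cauchy--Schwarz and $\|K(u-v)\|\le\|K\|\,\|u-v\|_{H_0^s}$ this gives
\[
|J_\alpha(u)-J_\alpha(v)|\le \tfrac12\big(\|K\|^2+\alpha\big)\,\|u-v\|_{H_0^s}\big(\|u\|_{H_0^s}+\|v\|_{H_0^s}\big)+\|K\|\,\|y^\delta\|\,\|u-v\|_{H_0^s}.
\]
So $J_\alpha$ is locally Lipschitz, and in particular continuous.

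\textbf{Strict convexity.} $F$ is convex because it is the composition of the affine map $u\mapsto Ku-y^\delta$ with the convex function $\frac12\|\cdot\|^2$. $G$ is strictly convex by the parallelogram identity in the Hilbert space $H_0^s(\Omega)$:
\[
\tfrac12\|\lambda u+(1-\lambda)v\|^2=\tfrac{\lambda}{2}\|u\|^2+\tfrac{1-\lambda}{2}\|v\|^2-\tfrac{\lambda(1-\lambda)}{2}\|u-v\|^2 .
\]
This is strict for $u\neq v$ and $\lambda\in(0,1)$ since $\alpha>0$. A convex function plus a strictly convex one is strictly convex.

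\textbf{Coercivity.} Since $F\ge0$, we have $J_\alpha(u)\ge\frac{\alpha}{2}\|u\|_{H_0^s}^2\to\infty$ as $\|u\|_{H_0^s}\to\infty$.

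\textbf{Weak lower semicontinuity.} I would argue this in two ways.
\begin{itemize}
\item Abstractly (Mazur): a convex, strongly continuous functional on a Banach space has closed convex sublevel sets. These are therefore weakly closed, so the functional is weakly lower semicontinuous.
\item Directly: if $u_n\rightharpoonup u$ in $H_0^s(\Omega)$, then $Ku_n-y^\delta\rightharpoonup Ku-y^\delta$ in $L^2(\Omega)$, because bounded linear operators are weak--weak continuous. Weak lower semicontinuity of the norms in $L^2(\Omega)$ and in $H_0^s(\Omega)$ then gives $\liminf J_\alpha(u_n)\ge J_\alpha(u)$.
\end{itemize}

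I do not expect a genuine obstacle. The only point needing care is that the integral on $\mathbb R^{2N}$, for functions vanishing outside $\Omega$, really defines a complete inner-product norm. This is standard; see \cite{DiNezza12}. With that in hand, every step is routine Hilbert-space convex analysis in the sense of \cite{EkelandTemam99}.
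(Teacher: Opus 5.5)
Your proposal is correct and follows the paper's proof essentially step by step: the same split into a convex fidelity term and a strictly convex squared Hilbert norm, coercivity from $J_\alpha(u)\ge\frac{\alpha}{2}\|u\|_{H_0^s(\Omega)}^2$, and weak lower semicontinuity from convexity plus continuity. Your explicit local Lipschitz bound, the parallelogram identity, and the extra direct weak-lower-semicontinuity argument only add detail to steps the paper states qualitatively.
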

\begin{proof}
(1) Since $K:H_0^s(\Omega)\to L^2(\Omega)$ is bounded and linear, the
mapping
$u\mapsto \|Ku-y^\delta\|_{L^2(\Omega)}^2$
is continuous. Since $u\mapsto \|u\|_{H_0^s(\Omega)}^2$ is also
continuous, $J_\alpha$ is continuous.

(2) The mapping
$u\mapsto \|Ku-y^\delta\|_{L^2(\Omega)}^2$
is convex, while
$u\mapsto \|u\|_{H_0^s(\Omega)}^2$
is strictly convex because it is the square of a Hilbert-space norm.
Hence $J_\alpha$, being the sum of a convex functional and a strictly
convex functional with positive weight $\alpha/2$, is strictly convex.

(3) Since
$\|Ku-y^\delta\|_{L^2(\Omega)}^2\ge0,$
we have
\begin{equation*}
    J_\alpha(u)\ge \frac{\alpha}{2}\|u\|_{H_0^s(\Omega)}^2\longrightarrow\infty
\text{ as }
\|u\|_{H_0^s(\Omega)}\to\infty,
\end{equation*}
which proves coercivity.

(4) The functional $J_\alpha$ is continuous and convex on the Hilbert
space $H_0^s(\Omega)$. Therefore, by a standard result in convex
analysis (see, e.g., \cite[Proposition 3.5]{EkelandTemam99}),
every continuous convex functional on a Banach space is weakly lower
semicontinuous. Hence $J_\alpha$ is weakly lower semicontinuous.
\end{proof}
\subsection{Existence and uniqueness of minimizers}
The properties established above allow us to apply the direct method
of the calculus of variations. Indeed, coercivity guarantees the
existence of bounded minimizing sequences, while weak lower
semicontinuity ensures that weak limits preserve minimality.
Moreover, the strict convexity of the quadratic functional implies
uniqueness of the minimizer.

Our first result establishes the well-posedness of the variational
problem.

\begin{theorem}[Existence and uniqueness]\label{thm:existence}
For every $\alpha>0$ and $y^\delta\in L^2(\Omega)$, the functional
$J_\alpha:H_0^s(\Omega)\to\mathbb{R}$ admits a unique minimizer
\[
u_\alpha^\delta\in H_0^s(\Omega).
\]
\end{theorem}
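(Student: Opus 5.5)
The plan is the direct method of the calculus of variations on the Hilbert space $H_0^s(\Omega)$, relying entirely on Proposition~\ref{prop:functional}. First, since $J_\alpha\ge 0$, the infimum
\[
m:=\inf_{u\in H_0^s(\Omega)}J_\alpha(u)
\]
is finite. It satisfies $0\le m\le J_\alpha(0)=\tfrac12\|y^\delta\|_{L^2(\Omega)}^2<\infty$. I pick a minimizing sequence $(u_n)\subset H_0^s(\Omega)$ with $J_\alpha(u_n)\to m$. By coercivity (item (3)), more precisely by the explicit bound $\frac{\alpha}{2}\|u_n\|_{H_0^s(\Omega)}^2\le J_\alpha(u_n)$, the sequence is bounded in $H_0^s(\Omega)$.

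The one structural point to check is that $H_0^s(\Omega)$, equipped with the norm $\|u\|_{L^2}^2+[u]_{H^s(\mathbb R^N)}^2$, is a Hilbert space. It is a closed subspace of $H^s(\mathbb R^N)$ (functions vanishing outside $\Omega$), and the norm comes from the obvious inner product. Hence it is reflexive, and by the Banach--Alaoglu/Eberlein--\v{S}mulian theorem a subsequence converges weakly, $u_{n_k}\rightharpoonup u_\alpha^\delta$ in $H_0^s(\Omega)$. Weak lower semicontinuity (item (4)) then gives
\[
J_\alpha(u_\alpha^\delta)\le\liminf_{k\to\infty}J_\alpha(u_{n_k})=m,
\]
so $u_\alpha^\delta$ is a minimizer. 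No compactness of $K$ or of an embedding is needed.

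For uniqueness, I argue by contradiction using strict convexity (item (2)). Suppose $u_1\neq u_2$ are both minimizers. Then
\[
J_\alpha\bigl(\tfrac12(u_1+u_2)\bigr)<\tfrac12J_\alpha(u_1)+\tfrac12J_\alpha(u_2)=m,
\]
which contradicts the definition of $m$. As a cross-check, the strict inequality can be made quantitative through the parallelogram identity: the regularization term contributes exactly $\frac{\alpha}{8}\|u_1-u_2\|_{H_0^s(\Omega)}^2>0$, and the data term is convex.

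I do not expect a genuine obstacle, since every analytic ingredient is already supplied by Proposition~\ref{prop:functional}. The only step needing a word of justification is the Hilbert-space (hence reflexive) structure of $H_0^s(\Omega)$. That is what guarantees weakly convergent subsequences of bounded minimizing sequences, and I would state it briefly with a reference to the standard theory of fractional Sobolev spaces \cite{DiNezza12}.
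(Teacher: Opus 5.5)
Your proposal is correct and follows the paper's own argument essentially step by step: coercivity bounds a minimizing sequence, and reflexivity of the Hilbert space \(H_0^s(\Omega)\) yields a weakly convergent subsequence. Weak lower semicontinuity from Proposition~\ref{prop:functional} makes the limit a minimizer, and strict convexity gives uniqueness, with your extra details (finiteness of the infimum via \(J_\alpha(0)\), the closed-subspace justification of the Hilbert structure, and the midpoint gap \(\frac{\alpha}{8}\|u_1-u_2\|_{H_0^s(\Omega)}^2\)) only making explicit what the paper leaves implicit.
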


\begin{proof}
By Proposition~\ref{prop:functional}, the functional $J_\alpha$ is coercive and weakly
lower semicontinuous. Hence every minimizing sequence is bounded in
$H_0^s(\Omega)$, and therefore admits a weakly convergent
subsequence. Weak lower semicontinuity implies the existence of a
minimizer, while strict convexity guarantees uniqueness.
\end{proof}

\subsection{Euler--Lagrange equation}

Theorem~\ref{thm:existence} gives a unique minimizer by variational methods.
For the Hilbert-scale analysis it is useful to characterize this minimizer by
its first-order optimality condition. Since the minimizer is known only to
belong to $H_0^s(\Omega)$, the fractional operator is interpreted weakly,
with test functions in the same energy space.

A function $u\in H_0^s(\Omega)$ is called a \emph{weak solution} of
\eqref{eq:min_problem} if it satisfies
\begin{equation}\label{eq:weak}
(Ku-y^\delta,Kv)_{L^2(\Omega)}
+
\alpha (u,v)_{H_0^s(\Omega)}
=
0,
\qquad
\forall\,v\in H_0^s(\Omega),
\end{equation}
where
\begin{gather*}
(u,v)_{H_0^s(\Omega)}
=
\int_\Omega uv\,dx
+
\frac{C_{N,s}}{2}
\iint_{\mathbb{R}^{2N}}
\frac{(u(x)-u(y))(v(x)-v(y))}
{|x-y|^{N+2s}}
\,dx\,dy.
\end{gather*}

Our next result shows that the unique minimizer furnished by
Theorem~\ref{thm:existence} is precisely characterized by this weak
formulation, and identifies the equivalent strong (operator) form of
the Euler--Lagrange equation.

\begin{theorem}[Euler--Lagrange equation]
\label{thm:euler}
Let
$u_\alpha^\delta$
be the unique minimizer of
$J_\alpha$.
Then
$u_\alpha^\delta$
satisfies \eqref{eq:weak}.
Equivalently,
$u_\alpha^\delta$
is the unique weak solution of
\begin{equation}\label{eq:euler}
K^*(Ku_\alpha^\delta-y^\delta)
+
\alpha
\left(I+(-\Delta)^s\right)
u_\alpha^\delta
=
0 \text{ in }H^{-s}(\Omega),
\end{equation}
or, equivalently,
\[
\left(K^*K+\alpha\left(I+(-\Delta)^s\right)\right)u_\alpha^\delta
=
K^*y^\delta.
\]
\end{theorem}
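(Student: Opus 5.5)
The plan is to compute the first variation of $J_\alpha$ and then reinterpret the resulting identity through the Riesz/Lax--Milgram framework on $H_0^s(\Omega)$.

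\emph{Step 1: the weak equation.} Fix $v\in H_0^s(\Omega)$ and consider $\varphi(\varepsilon)=J_\alpha(u_\alpha^\delta+\varepsilon v)$ for $\varepsilon\in\mathbb R$. Since $J_\alpha$ is quadratic, expanding the two squared norms gives
\[
\varphi(\varepsilon)=J_\alpha(u_\alpha^\delta)+\varepsilon\Big[(Ku_\alpha^\delta-y^\delta,Kv)_{L^2(\Omega)}+\alpha(u_\alpha^\delta,v)_{H_0^s(\Omega)}\Big]+\frac{\varepsilon^2}{2}\Big[\|Kv\|_{L^2(\Omega)}^2+\alpha\|v\|_{H_0^s(\Omega)}^2\Big].
\]
This is a polynomial in $\varepsilon$. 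It attains its minimum at $\varepsilon=0$ by Theorem~\ref{thm:existence}, so $\varphi'(0)=0$, which is exactly \eqref{eq:weak}.

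\emph{Step 2: uniqueness of the weak solution.} Conversely, the same expansion shows that any $u$ satisfying \eqref{eq:weak} obeys $J_\alpha(u+v)=J_\alpha(u)+\frac12\|Kv\|^2+\frac{\alpha}{2}\|v\|_{H_0^s}^2\ge J_\alpha(u)$ for every $v$. Hence every weak solution is a minimizer, and it coincides with $u_\alpha^\delta$ by uniqueness in Theorem~\ref{thm:existence}.

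Alternatively, the bilinear form $a(u,v)=(Ku,Kv)+\alpha(u,v)_{H_0^s}$ is bounded and satisfies $a(v,v)\ge\alpha\|v\|_{H_0^s}^2$, so Lax--Milgram gives uniqueness directly.

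\emph{Step 3: the operator form.} This step is mostly bookkeeping. Let $K^*:L^2(\Omega)\to H^{-s}(\Omega)=(H_0^s(\Omega))'$ be the Banach adjoint, defined by $\langle K^*w,v\rangle=(w,Kv)_{L^2}$. Define $I+(-\Delta)^s:H_0^s(\Omega)\to H^{-s}(\Omega)$ as the Riesz-type map $\langle (I+(-\Delta)^s)u,v\rangle=(u,v)_{H_0^s(\Omega)}$. This is consistent with the paper's convention that $(-\Delta)^s$ is the operator associated with the quadratic form $[\cdot]_{H^s}^2$, with $I$ acting through the $L^2$ pairing, which embeds $L^2$ into $H^{-s}$. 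Then \eqref{eq:weak} states that the functional $K^*(Ku_\alpha^\delta-y^\delta)+\alpha(I+(-\Delta)^s)u_\alpha^\delta$ vanishes on every $v\in H_0^s(\Omega)$, which is \eqref{eq:euler} in $H^{-s}(\Omega)$. Rearranging by linearity gives $(K^*K+\alpha(I+(-\Delta)^s))u_\alpha^\delta=K^*y^\delta$.

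The only point requiring care is making this identification of the operators precise. In particular, one should note that the form defining $(-\Delta)^s$ on $H_0^s$ is taken with respect to the Gelfand triple $H_0^s\subset L^2\subset H^{-s}$; the strong operator is not needed.
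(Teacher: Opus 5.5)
Your proposal is correct and follows essentially the same route as the paper. Setting $\varepsilon$-variations to zero gives \eqref{eq:weak}; the converse follows from the exact expansion $J_\alpha(u+v)-J_\alpha(u)=\frac12\|Kv\|_{L^2(\Omega)}^2+\frac{\alpha}{2}\|v\|_{H_0^s(\Omega)}^2\ge0$ together with uniqueness of the minimizer; and the operator form comes from reading \eqref{eq:weak} in $H^{-s}(\Omega)$ through the adjoint and the identity $(u,v)_{H_0^s(\Omega)}=\langle (I+(-\Delta)^s)u,v\rangle_{H^{-s},H_0^s}$. Your explicit treatment of $K^*$ as the Banach adjoint $L^2(\Omega)\to H^{-s}(\Omega)$ only makes precise what the paper leaves implicit.
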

This variational identity represents the weak form of the
regularized inverse problem and will be used extensively in the
subsequent statistical analysis.
\begin{proof}
Let $u_\alpha^\delta\in H_0^s(\Omega)$ be the unique minimizer of
$J_\alpha$. For any $v\in H_0^s(\Omega)$, define
$$g(t)=J_\alpha(u_\alpha^\delta+tv), \qquad t\in\mathbb{R}.$$
Since $u_\alpha^\delta$ is a minimizer, \(g'(0)=0\). A direct
differentiation gives
\begin{gather*}
g'(0)
=
(Ku_\alpha^\delta-y^\delta,Kv)_{L^2(\Omega)}
+
\alpha (u_\alpha^\delta,v)_{H_0^s(\Omega)}.
\end{gather*}
Hence
\begin{equation*}
    (Ku_\alpha^\delta-y^\delta,Kv)_{L^2(\Omega)}
+\alpha (u_\alpha^\delta,v)_{H_0^s(\Omega)}=0 \,
\forall v\in H_0^s(\Omega),
\end{equation*}
which proves the weak Euler--Lagrange equation.

Conversely, suppose $u\in H_0^s(\Omega)$ satisfies the above identity.
For any $w\in H_0^s(\Omega)$, taking \(v=w-u\) yields
$$(Ku-y^\delta,K(w-u))_{L^2(\Omega)}+\alpha (u,w-u)_{H_0^s(\Omega)}=0.$$
Using the quadratic expansion of $J_\alpha(w)-J_\alpha(u)$, we obtain
\begin{equation*}
    J_\alpha(w)-J_\alpha(u)=\frac12\|K(w-u)\|_{L^2(\Omega)}^2+\frac{\alpha}{2}\|w-u\|_{H_0^s(\Omega)}^2\ge 0.
\end{equation*}
Thus $u$ is a minimizer of $J_\alpha$, and by uniqueness it coincides
with $u_\alpha^\delta$.
Finally, using the definition of the adjoint operator $K^*$ and the
identity
$$(u,v)_{H_0^s(\Omega)}=\langle (I+(-\Delta)^s)u,v\rangle_{H^{-s},H_0^s},$$
the weak formulation is equivalent to
\begin{gather*}
K^*(Ku_\alpha^\delta-y^\delta)+\alpha (I+(-\Delta)^s)u_\alpha^\delta=0 \text{in }H^{-s}(\Omega),
\end{gather*}
or equivalently,
$$(K^*K+\alpha(I+(-\Delta)^s))u_\alpha^\delta=K^*y^\delta.$$
This completes the proof.
\end{proof}

\subsection{Stability with Respect to Data Perturbations}
The well-posedness results established above guarantee that, for each
fixed datum $y\in L^2(\Omega)$, the fractional Tikhonov functional
admits a unique minimizer. A fundamental property of any regularization
method is, however, the continuous dependence of the reconstructed
solution on the observed data. More precisely, if two data sets are
close in $L^2(\Omega)$, then the corresponding regularized minimizers
should remain close in the energy space $H_0^s(\Omega)$. The following
result establishes this Lipschitz stability of the solution mapping,
which forms the deterministic foundation for the quantitative error
analysis developed in Section~\ref{sec:hilbert-scale-transform}. Throughout the paper, \(\|K\|\) denotes the operator norm of
\(K\in\mathcal L(H_0^s(\Omega),L^2(\Omega))\).
\begin{theorem}[Lipschitz stability]
\label{thm:lipschitz}
Let
$y_1^\delta,y_2^\delta\in L^2(\Omega)$
and let
$u_1,u_2\in H_0^s(\Omega)$
be the corresponding minimizers of
$J_\alpha$. Then there exists a constant
$C>0$,
depending only on
$\alpha$
and
$\|K\|$,
such that
\begin{gather*}
    \|u_1-u_2\|_{H_0^s(\Omega)} \le C \|y_1^\delta-y_2^\delta\|_{L^2(\Omega)}.
\end{gather*}
Hence the solution operator
\begin{equation*}
    S_\alpha:L^2(\Omega)\longrightarrow H_0^s(\Omega), \,
S_\alpha(y^\delta)=u_\alpha^\delta,
\end{equation*}
is Lipschitz continuous.
\end{theorem}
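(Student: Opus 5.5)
The plan is to subtract the two weak Euler--Lagrange identities from Theorem~\ref{thm:euler} and test with the difference of the minimizers. Let $u_1,u_2$ be the minimizers for $y_1^\delta,y_2^\delta$, and set $w=u_1-u_2\in H_0^s(\Omega)$. By Theorem~\ref{thm:euler}, each $u_i$ satisfies \eqref{eq:weak} with data $y_i^\delta$, for all $v\in H_0^s(\Omega)$. Subtracting the two identities gives
\[
(Kw,Kv)_{L^2(\Omega)}+\alpha(w,v)_{H_0^s(\Omega)}=(y_1^\delta-y_2^\delta,Kv)_{L^2(\Omega)},\qquad \forall\,v\in H_0^s(\Omega).
\]

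Choosing $v=w$ yields
\[
\|Kw\|_{L^2(\Omega)}^2+\alpha\|w\|_{H_0^s(\Omega)}^2=(y_1^\delta-y_2^\delta,Kw)_{L^2(\Omega)}.
\]
There are two ways to bound the right-hand side.

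The first route uses Cauchy--Schwarz and the boundedness of $K$. This gives the right-hand side $\le \|K\|\,\|y_1^\delta-y_2^\delta\|_{L^2(\Omega)}\|w\|_{H_0^s(\Omega)}$. Dropping the nonnegative term $\|Kw\|^2$ and dividing by $\|w\|_{H_0^s(\Omega)}$ (the case $w=0$ is trivial) gives the estimate with $C=\|K\|/\alpha$.

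The second route is sharper and does not need $\|K\|$. Bound the right-hand side by $\|y_1^\delta-y_2^\delta\|\,\|Kw\|\le \|Kw\|^2+\tfrac14\|y_1^\delta-y_2^\delta\|^2$ using Young's inequality. Absorbing $\|Kw\|^2$ on the left yields $\alpha\|w\|_{H_0^s}^2\le\tfrac14\|y_1^\delta-y_2^\delta\|^2$, i.e.\ $C=1/(2\sqrt\alpha)$.

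Either constant depends only on $\alpha$ and $\|K\|$, as the statement requires. I would state the first route as the main argument and mention the second as a remark, since it matches the $\alpha^{-1/2}$ scaling used in the later Hilbert-scale analysis. Lipschitz continuity of $S_\alpha$ is then immediate; linearity of $S_\alpha$ also follows from the linear Euler--Lagrange equation, though it is not needed.

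No step is a genuine obstacle. The only point to check is that the weak formulation from Theorem~\ref{thm:euler} holds for every test function in $H_0^s(\Omega)$, so that $v=w$ is admissible; this is guaranteed because $u_1,u_2\in H_0^s(\Omega)$.
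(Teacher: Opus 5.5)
Your first route is exactly the paper's argument: you subtract the two weak Euler--Lagrange identities, test with $v=u_1-u_2$, and apply Cauchy--Schwarz and $\|K\|$ to get $C=\|K\|/\alpha$. Your Young's-inequality variant is also correct and gives the sharper, $\|K\|$-independent constant $C=1/(2\sqrt{\alpha})$, which matches the bound $\sup_{\lambda\ge0}\lambda/(\lambda+\alpha)^2=1/(4\alpha)$ used later in the proof of Theorem~\ref{thm:estimate_explicit}.
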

\begin{proof}
Let $u_1,u_2\in H_0^s(\Omega)$ be the minimizers corresponding to the
data $y_1^\delta,y_2^\delta$, respectively. By the Euler--Lagrange
equation, for every $v\in H_0^s(\Omega)$,
\begin{equation*}
 (Ku_i-y_i^\delta,Kv)_{L^2(\Omega)}+\alpha (u_i,v)_{H_0^s(\Omega)}=0,\qquad i=1,2.
\end{equation*}
Subtracting the two identities and choosing $v=u_1-u_2$, we obtain
\begin{equation*}
 \|K(u_1-u_2)\|_{L^2(\Omega)}^2+\alpha \|u_1-u_2\|_{H_0^s(\Omega)}^2
=(y_1^\delta-y_2^\delta,K(u_1-u_2))_{L^2(\Omega)}.
\end{equation*}
Hence, by Cauchy--Schwarz and the boundedness of $K$,
\begin{align*}
\alpha \|u_1-u_2\|_{H_0^s(\Omega)}^2
\le\|y_1^\delta-y_2^\delta\|_{L^2(\Omega)}\|K(u_1-u_2)\|_{L^2(\Omega)}\le
\|K\|\,
\|y_1^\delta-y_2^\delta\|_{L^2(\Omega)}
\|u_1-u_2\|_{H_0^s(\Omega)}.
\end{align*}
If $u_1=u_2$, the estimate is trivial. Otherwise, dividing by
$\|u_1-u_2\|_{H_0^s(\Omega)}$ gives
\begin{equation*}
 \|u_1-u_2\|_{H_0^s(\Omega)} \le \frac{\|K\|}{\alpha}
\|y_1^\delta-y_2^\delta\|_{L^2(\Omega)}.
\end{equation*}
Thus the solution map $S_\alpha:y^\delta\mapsto u_\alpha^\delta$ is
Lipschitz continuous.
\end{proof}

The results established in this section show that the fractional
least-squares problem is well posed in the variational setting.
In the next section we reformulate the problem in an abstract
Hilbert-scale framework, which allows us to derive quantitative
statistical error estimates and parameter-choice rules.

\section{Hilbert-Scale Analysis and Statistical Error Estimates}
\label{sec:statistical-error}

Section~\ref{sec:var-form} establishes the variational well-posedness of the
fractional Tikhonov problem. To derive quantitative reconstruction estimates,
we now identify the positive self-adjoint operator
\[
A_s=I+(-\Delta)^s
\]
associated with the fractional Dirichlet energy, reformulate the problem in the
corresponding Hilbert scale, and obtain statistical error bounds,
convergence rates, and order-optimal \emph{a priori} and \emph{a posteriori}
parameter-choice rules under suitable source conditions.

\subsection{Fractional operator and Hilbert-scale formulation}
\label{sec:Hilbert-scale-form}

Let
\[
a_s(u,v)
=
\int_\Omega uv\,dx
+
\frac{C_{N,s}}{2}
\iint_{\mathbb R^{2N}}
\frac{(u(x)-u(y))(v(x)-v(y))}
{|x-y|^{N+2s}}
\,dx\,dy,
\qquad
u,v\in H_0^s(\Omega),
\]
be the closed, symmetric, and coercive form inducing the
\(H_0^s(\Omega)\)-inner product. In particular,
\[
a_s(u,u)
=
\|u\|_{L^2(\Omega)}^2
+
[u]_{H^s(\mathbb R^N)}^2
=
\|u\|_{H_0^s(\Omega)}^2.
\]

By the first representation theorem for closed forms, due to Kato
\cite{Kato1980}; see also \cite{Ouhabaz2005,Fukushima2011}, there exists a
unique positive self-adjoint operator
\[
A_s=I+(-\Delta)^s
\]
on \(L^2(\Omega)\) associated with \(a_s\), characterized by
\[
\langle A_su,v\rangle_{H^{-s},H_0^s}
=
a_s(u,v),
\qquad
u,v\in H_0^s(\Omega).
\]
Moreover,
\[
D(A_s^{1/2})=H_0^s(\Omega),
\]
and
\[
\|A_s^{1/2}u\|_{L^2(\Omega)}^2
=
a_s(u,u)
=
\|u\|_{H_0^s(\Omega)}^2,
\qquad
u\in H_0^s(\Omega).
\]

Since \(A_s\) is positive and self-adjoint, the spectral theorem defines
\(A_s^\theta\) for every \(\theta\in\mathbb R\). This identification connects
the variational formulation with the Hilbert-scale framework used below for
the quantitative statistical error analysis.

\subsection{From variational well-posedness to Hilbert-scale error analysis}
The variational theory yields well-posedness, but not quantitative rates. To
estimate the reconstruction error we pass to the Hilbert scale generated by
$A_s$, where spectral calculus can be applied to the transformed operator
$B=KA_s^{-1/2}$. The construction proceeds by identifying the square-root
isometry, reformulating the minimization problem in $L^2(\Omega)$, and then
estimating the bias and propagated-noise terms under a source condition.

The Hilbert-scale formulation is developed in three steps.
\begin{itemize}
\item[(i)] We identify the positive self-adjoint operator associated
with the fractional Dirichlet energy.

\item[(ii)] We reformulate the variational problem in the corresponding
Hilbert scale through an appropriate change of variables.

\item[(iii)] We establish statistical reconstruction error estimates and
convergence rates under fractional source conditions.
\end{itemize}

The next proposition records, via an explicit spectral argument, the
precise sense in which $A_s^{1/2}$ realizes the $H_0^s(\Omega)$-norm as
an $L^2(\Omega)$-norm. This isometry is used repeatedly throughout the
remainder of the paper.

\begin{proposition}[Hilbert-scale isometry]
\label{prop:isometry}
Let $A_s$ be the operator defined above. 
The operator
\begin{gather*}
A_s^{1/2}:D(A_s^{1/2})=H_0^s(\Omega)\longrightarrow L^2(\Omega)
\end{gather*}
is an isometric isomorphism of the Hilbert space $\bigl(H_0^s(\Omega),\|\cdot\|_{H_0^s(\Omega)}\bigr)$ onto $L^2(\Omega)$, with bounded inverse
\begin{gather*}
A_s^{-1/2}:L^2(\Omega)\longrightarrow H_0^s(\Omega).
\end{gather*}
Consequently, the Hilbert-scale transformation
$z=A_s^{1/2}u$ identifies the fractional Sobolev space $H_0^s(\Omega)$
isometrically with $L^2(\Omega)$. This identification provides the
functional-analytic foundation for the Hilbert-scale reformulation
developed in the next subsection.
\end{proposition}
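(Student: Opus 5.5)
We would prove the proposition by combining Kato's first representation theorem with the spectral theorem for $A_s$; both are already invoked in the text preceding the statement. The key ingredients are that $A_s$ is self-adjoint, that it is bounded below by $1$, and that its form domain is exactly $H_0^s(\Omega)$.

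\emph{Step 1 (lower bound and bounded inverse).} Since $a_s(u,u)=\|u\|_{L^2(\Omega)}^2+[u]_{H^s(\mathbb R^N)}^2\ge\|u\|_{L^2(\Omega)}^2$, the operator $A_s$ satisfies $A_s\ge I$ in the form sense. Its spectrum is therefore contained in $[1,\infty)$. Let $E$ be the spectral resolution of $A_s$. Then $A_s^{1/2}=\int_{[1,\infty)}\lambda^{1/2}\,dE(\lambda)$ is self-adjoint and satisfies $A_s^{1/2}\ge I$. Moreover, $A_s^{-1/2}=\int\lambda^{-1/2}\,dE(\lambda)$ is a bounded operator on $L^2(\Omega)$ with norm at most $1$. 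It is injective, and its range is exactly $D(A_s^{1/2})$, because
\[
\int\lambda\,d\|E(\lambda)A_s^{-1/2}f\|^2=\|f\|^2<\infty .
\]
It follows that $A_s^{1/2}:D(A_s^{1/2})\to L^2(\Omega)$ is a bijection whose inverse is $A_s^{-1/2}$.

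\emph{Step 2 (domain identification and isometry).} Kato's second representation theorem gives $D(A_s^{1/2})=D(a_s)=H_0^s(\Omega)$ and $a_s(u,v)=(A_s^{1/2}u,A_s^{1/2}v)_{L^2(\Omega)}$ for all $u,v$ in this domain. Taking $u=v$ yields $\|A_s^{1/2}u\|_{L^2(\Omega)}=\|u\|_{H_0^s(\Omega)}$. Combined with the bijectivity from Step 1, this shows that $A_s^{1/2}$ is a surjective linear isometry, i.e.\ an isometric isomorphism. Its inverse is then automatically isometric from $L^2(\Omega)$ into $(H_0^s(\Omega),\|\cdot\|_{H_0^s(\Omega)})$. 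In particular, $A_s^{-1/2}$ is bounded into $H_0^s(\Omega)$ with norm exactly $1$, which is stronger than boundedness merely as an operator on $L^2(\Omega)$. The identification $z=A_s^{1/2}u$ is then immediate.

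\emph{Main obstacle.} The delicate point is the applicability of Kato's theorem, namely that $a_s$ is a densely defined, closed form on $L^2(\Omega)$ with domain $H_0^s(\Omega)$. Closedness means that $(H_0^s(\Omega),a_s(\cdot,\cdot)^{1/2})$ is complete. This holds because $H_0^s(\Omega)$, viewed as the closure of $C_c^\infty(\Omega)$ (equivalently, the space of $H^s(\mathbb R^N)$ functions vanishing outside $\Omega$, for Lipschitz $\Omega$), is a closed subspace of $H^s(\mathbb R^N)$, and $a_s^{1/2}$ is equivalent to the $H^s(\mathbb R^N)$ norm. Density in $L^2(\Omega)$ follows from $C_c^\infty(\Omega)\subset H_0^s(\Omega)$. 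Once these two facts are recorded, Steps 1 and 2 are routine spectral calculus.
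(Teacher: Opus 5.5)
Your proposal is correct and follows essentially the same route as the paper: Kato's representation theorem gives $D(A_s^{1/2})=H_0^s(\Omega)$ and $\|A_s^{1/2}u\|_{L^2(\Omega)}=\|u\|_{H_0^s(\Omega)}$, and $A_s\ge I$ together with spectral calculus gives the bounded inverse $A_s^{-1/2}$. You go slightly further than the paper by verifying that $a_s$ is densely defined and closed, which the paper takes for granted, and by making explicit that $\operatorname{ran} A_s^{-1/2}=D(A_s^{1/2})$, which the paper uses only implicitly.
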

\begin{proof}
By Kato's representation theorem, the closed coercive form $a_s$ has
form domain $D(A_s^{1/2})=H_0^s(\Omega)$, and
\begin{equation*}
\|A_s^{1/2}u\|_{L^2(\Omega)}^2
=a_s(u,u)=\|u\|_{H_0^s(\Omega)}^2,\,
u\in H_0^s(\Omega).
\end{equation*}
Hence $A_s^{1/2}$ is an isometry. Finally, since $A_s=I+(-\Delta)^s\ge I$, the spectrum of $A_s$ is
contained in $[1,\infty)$, so $A_s^{1/2}\ge I$ as well; in particular
$A_s^{1/2}$ is boundedly invertible on $L^2(\Omega)$, with
$A_s^{-1/2}:L^2(\Omega)\to H_0^s(\Omega)$ bounded, $\|A_s^{-1/2}\|\le1$,
and $A_s^{-1/2}A_s^{1/2}u=u$, $A_s^{1/2}A_s^{-1/2}z=z$ for all
$u\in H_0^s(\Omega)$, $z\in L^2(\Omega)$. Thus $A_s^{1/2}$ is in fact an
isometric isomorphism of $H_0^s(\Omega)$ onto $L^2(\Omega)$.
\end{proof}

\subsection{Hilbert-scale transformation}
\label{sec:hilbert-scale-transform}
The operator $A_s^{1/2}$ induces an isometric isomorphism between
$H_0^s(\Omega)$ and $L^2(\Omega)$. Exploiting this isometry, we
reformulate the fractional Tikhonov functional as an equivalent
Hilbert-space minimization problem.

\begin{proposition}[Equivalence of the fractional and Hilbert-space formulations]
Using the isometric isomorphism established in Proposition~\ref{prop:isometry},
introduce the transformed variable
$z=A_s^{1/2}u,\, u=A_s^{-1/2}z.$
Defining $B:=KA_s^{-1/2},$
we introduce the transformed Tikhonov functional
$\widetilde J_\alpha:L^2(\Omega)\to\mathbb R$
by
\begin{equation}
 \widetilde J_\alpha(z)=\frac12\|Bz-y^\delta\|_{L^2(\Omega)}^2+ \frac{\alpha}{2}\|z\|_{L^2(\Omega)}^2.
\end{equation}
Then
$\min_{u\in H_0^s(\Omega)}J_\alpha(u)$
is equivalent to
$\min_{z\in L^2(\Omega)}\widetilde J_\alpha(z).$
Moreover, if $z_\alpha^\delta$ denotes the unique minimizer of
$\widetilde J_\alpha$, then the corresponding minimizer of
$J_\alpha$ is given by
$$u_\alpha^\delta=A_s^{-1/2}z_\alpha^\delta.$$
\end{proposition}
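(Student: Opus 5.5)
The plan is to use the isometric isomorphism $T:=A_s^{1/2}:H_0^s(\Omega)\to L^2(\Omega)$ of Proposition~\ref{prop:isometry} to show that $J_\alpha=\widetilde J_\alpha\circ T$ identically. Minimization problems related by such a composition with a bijection are equivalent, and their minimizers correspond under $T$.

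\textbf{Well-definedness of $B$.} First check that $B=KA_s^{-1/2}$ is a bounded operator $L^2(\Omega)\to L^2(\Omega)$. By Proposition~\ref{prop:isometry}, $A_s^{-1/2}$ maps $L^2(\Omega)$ into $H_0^s(\Omega)$ isometrically. Since $K\in\mathcal L(H_0^s(\Omega),L^2(\Omega))$, the composition $B$ is bounded with $\|B\|=\|K\|$. Hence $\widetilde J_\alpha$ is a well-defined, continuous, strictly convex, coercive quadratic functional on $L^2(\Omega)$. Its unique minimizer exists by the same direct-method argument as in Theorem~\ref{thm:existence}, or explicitly as $z_\alpha^\delta=(B^*B+\alpha I)^{-1}B^*y^\delta$.

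\textbf{The identity $J_\alpha=\widetilde J_\alpha\circ T$.} For $u\in H_0^s(\Omega)$ set $z=A_s^{1/2}u\in L^2(\Omega)$. Then $Bz=KA_s^{-1/2}A_s^{1/2}u=Ku$, using the inverse relation from Proposition~\ref{prop:isometry}. The isometry gives $\|z\|_{L^2(\Omega)}=\|u\|_{H_0^s(\Omega)}$. Substituting both into $\widetilde J_\alpha$ yields $\widetilde J_\alpha(A_s^{1/2}u)=J_\alpha(u)$ for every $u\in H_0^s(\Omega)$. Since $A_s^{1/2}$ is surjective onto $L^2(\Omega)$, every $z$ arises this way, with $u=A_s^{-1/2}z$, so $J_\alpha(A_s^{-1/2}z)=\widetilde J_\alpha(z)$ for all $z\in L^2(\Omega)$.

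\textbf{Equivalence and correspondence of minimizers.} It follows that $\inf J_\alpha=\inf\widetilde J_\alpha$. Moreover, $u$ minimizes $J_\alpha$ if and only if $A_s^{1/2}u$ minimizes $\widetilde J_\alpha$. By uniqueness on both sides (Theorem~\ref{thm:existence} and strict convexity of $\widetilde J_\alpha$), we conclude $u_\alpha^\delta=A_s^{-1/2}z_\alpha^\delta$.

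As a consistency check, the Euler--Lagrange equation $(B^*B+\alpha I)z=B^*y^\delta$ can be compared with Theorem~\ref{thm:euler}. I do not need this comparison for the proof, and I would mention it only as a remark.

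\textbf{Main obstacle.} The only delicate point is the step $A_s^{-1/2}A_s^{1/2}u=u$ together with surjectivity. This requires $D(A_s^{1/2})$ to be exactly $H_0^s(\Omega)$, so that no $z$ is missed and no $u$ lies outside the domain. That is provided by Kato's representation theorem, already invoked in Proposition~\ref{prop:isometry}. Everything else is direct substitution.
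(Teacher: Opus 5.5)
Your proposal is correct and follows essentially the same route as the paper: both use the isometric isomorphism $A_s^{1/2}:H_0^s(\Omega)\to L^2(\Omega)$ from Proposition~\ref{prop:isometry} to get $J_\alpha(A_s^{-1/2}z)=\widetilde J_\alpha(z)$ for all $z\in L^2(\Omega)$, and then transfer minimizers through the bijection. Your extra checks make explicit what the paper's proof takes for granted, namely that $B$ is bounded with $\|B\|=\|K\|$ and that $z_\alpha^\delta$ exists and is unique by strict convexity.
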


\begin{proof}
By Proposition~\ref{prop:isometry}, the map $u\mapsto z=A_s^{1/2}u$ is
an isometric isomorphism of $H_0^s(\Omega)$ onto $L^2(\Omega)$, with
inverse $u=A_s^{-1/2}z$. Substituting $u=A_s^{-1/2}z$ into $\widetilde J_\alpha$
and using $\|u\|_{H_0^s(\Omega)}=\|A_s^{1/2}u\|_{L^2(\Omega)}=\|z\|_{L^2(\Omega)}$
gives, for every $z\in L^2(\Omega)$,
\begin{gather*}
\widetilde J_\alpha(A_s^{-1/2}z)=\frac12\|KA_s^{-1/2}z-y^\delta\|_{L^2(\Omega)}^2
+\frac{\alpha}{2}\|z\|_{L^2(\Omega)}^2=\frac12\|Bz-y^\delta\|_{L^2(\Omega)}^2+\frac{\alpha}{2}\|z\|_{L^2(\Omega)}^2.
\end{gather*}
Since $u\mapsto z=A_s^{1/2}u$ is a bijection between $H_0^s(\Omega)$ and
$L^2(\Omega)$, minimizing $\widetilde J_\alpha(u)$ over $u\in H_0^s(\Omega)$ is
therefore equivalent to minimizing the right-hand side over
$z\in L^2(\Omega)$. In particular, $u_\alpha^\delta$ minimizes $\widetilde J_\alpha$
if and only if $z_\alpha^\delta:=A_s^{1/2}u_\alpha^\delta$ minimizes the
transformed functional, i.e.\ $u_\alpha^\delta=A_s^{-1/2}z_\alpha^\delta$.
\end{proof}
The Hilbert-scale reformulation reduces the fractional regularization
problem to the classical Hilbert-space setting. Quantitative
reconstruction results, however, require additional regularity of the
exact solution, which is expressed through the following fractional
source condition.
\subsection{Statistical Error Estimates and Convergence Rates}
\label{sec:statistical-estimates}

Next, we assume that the noisy observations satisfy
\begin{equation*}
 y^\delta=Ku^\dagger+\eta^\delta,
\end{equation*}
where
$u^\dagger\in H_0^s(\Omega)$
is the exact signal and
$\eta^\delta$
is a random perturbation satisfying
\begin{equation*}
 \mathbb{E}[\eta^\delta]=0, \, \mathbb{E} \|\eta^\delta\|_{L^2(\Omega)}^2 \le \delta^2.   
\end{equation*}
Our first result in this subsection establishes the classical
bias--variance decomposition, which separates the reconstruction
error into a deterministic approximation (bias) term and a
stochastic (variance) term.

\begin{proposition}[Error decomposition]
\label{prop:bias}
Let
$u_\alpha$
denote the minimizer corresponding to the exact data
$Ku^\dagger$,
and let
$u_\alpha^\delta$
be the minimizer associated with the noisy data
$y^\delta$. Then
\begin{gather*}
\mathbb{E}\|u_\alpha^\delta-u^\dagger\|_{H_0^s(\Omega)}^2\le
2\|u_\alpha-u^\dagger\|_{H_0^s(\Omega)}^2+2\mathbb{E}\|u_\alpha^\delta-u_\alpha\|_{H_0^s(\Omega)}^2.
\end{gather*}
The first term represents the approximation
(bias) error, while the second term represents the stochastic
(variance) error.
\end{proposition}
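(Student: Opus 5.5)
The plan is to split the error as $u_\alpha^\delta-u^\dagger=(u_\alpha-u^\dagger)+(u_\alpha^\delta-u_\alpha)$ and apply the elementary inequality $\|a+b\|^2\le 2\|a\|^2+2\|b\|^2$ in the Hilbert space $H_0^s(\Omega)$. This inequality follows from the parallelogram law, or from $2(a,b)\le\|a\|^2+\|b\|^2$.

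First, I need both quantities to be well defined. The element $u_\alpha$ exists and is unique by Theorem~\ref{thm:existence} applied with data $Ku^\dagger\in L^2(\Omega)$. Likewise $u_\alpha^\delta$ is the minimizer for the realization $y^\delta$. Measurability of $\omega\mapsto u_\alpha^\delta$ follows from Theorem~\ref{thm:lipschitz}: the solution map $S_\alpha$ is Lipschitz, hence continuous, so $u_\alpha^\delta=S_\alpha(y^\delta)$ is a random variable in $H_0^s(\Omega)$. Moreover $\|u_\alpha^\delta-u_\alpha\|_{H_0^s(\Omega)}\le (\|K\|/\alpha)\|\eta^\delta\|_{L^2(\Omega)}$, so its second moment is finite.

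Next, for each realization I apply the inequality pointwise:
\[
\|u_\alpha^\delta-u^\dagger\|_{H_0^s(\Omega)}^2\le 2\|u_\alpha-u^\dagger\|_{H_0^s(\Omega)}^2+2\|u_\alpha^\delta-u_\alpha\|_{H_0^s(\Omega)}^2 .
\]
Then I take expectations. The first term is deterministic, because $u_\alpha$ depends only on the exact data, so its expectation is itself. Linearity and monotonicity of expectation then give the claimed bound.

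No step is a genuine obstacle; the only point needing care is the measurability and integrability remark above. As an aside, one could get equality without the factor $2$. By Theorem~\ref{thm:euler}, $u_\alpha^\delta-u_\alpha$ is linear in $\eta^\delta$, so $\mathbb E[u_\alpha^\delta-u_\alpha]=0$ and the cross term vanishes in expectation. The factor-$2$ bound as stated does not require this.
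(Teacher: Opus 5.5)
Your proposal is correct and follows the paper's proof essentially verbatim: you split the error through $u_\alpha$, apply $\|a+b\|^2\le 2\|a\|^2+2\|b\|^2$ pointwise, and take expectations with the deterministic bias term pulled out. Your measurability and integrability remarks (via the Lipschitz bound of Theorem~\ref{thm:lipschitz}) are valid refinements the paper omits. So is your observation that, under the paper's standing assumption $\mathbb E[\eta^\delta]=0$, the cross term vanishes and the bound holds with equality and without the factor $2$.
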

\begin{proof}
Let $u_\alpha^\delta$ be the minimizer corresponding to the noisy data
$y^\delta$, and let $u_\alpha$ be the minimizer corresponding to the
exact data $Ku^\dagger$. We write
\begin{gather*}
u_\alpha^\delta-u^\dagger=(u_\alpha^\delta-u_\alpha)+(u_\alpha-u^\dagger).
\end{gather*}
Using the elementary inequality $\|a+b\|^2\le 2\|a\|^2+2\|b\|^2,$ with
\begin{gather*}
a=u_\alpha^\delta-u_\alpha, \,
b=u_\alpha-u^\dagger,
\end{gather*}
we obtain
\begin{gather*}
 \|u_\alpha^\delta-u^\dagger\|_{H_0^s(\Omega)}^2\le
2\|u_\alpha^\delta-u_\alpha\|_{H_0^s(\Omega)}^2+2\|u_\alpha-u^\dagger\|_{H_0^s(\Omega)}^2.
\end{gather*}
Taking expectation on both sides gives
\begin{equation*}
    \mathbb E\|u_\alpha^\delta-u^\dagger\|_{H_0^s(\Omega)}^2\le
2\mathbb E\|u_\alpha^\delta-u_\alpha\|_{H_0^s(\Omega)}^2+2\mathbb E
\|u_\alpha-u^\dagger\|_{H_0^s(\Omega)}^2.
\end{equation*}
Since $u_\alpha$ and $u^\dagger$ are deterministic, we have
$$\mathbb E\|u_\alpha-u^\dagger\|_{H_0^s(\Omega)}^2
=\|u_\alpha-u^\dagger\|_{H_0^s(\Omega)}^2.$$
Therefore
\[
\mathbb E
\|u_\alpha^\delta-u^\dagger\|_{H_0^s(\Omega)}^2\le
2\|u_\alpha-u^\dagger\|_{H_0^s(\Omega)}^2+2\mathbb E\|u_\alpha^\delta-u_\alpha\|_{H_0^s(\Omega)}^2.
\]
This is the desired bias--variance decomposition.
\end{proof}
The variance term appearing above is controlled by a quantitative
estimate of the reconstruction error under a H\"older-type source
condition, which we now establish with explicit constants.

 Proposition~\ref{prop:bias} establishes an abstract decomposition of the reconstruction error into its deterministic approximation component and the propagated noise component. This decomposition is independent of any smoothness assumption on the exact solution and therefore provides the structural foundation for the subsequent analysis. To derive explicit convergence rates, it remains to estimate each component under an appropriate fractional source condition. This quantitative analysis is carried out in following result.

\begin{theorem}[Mean-square reconstruction estimate]
\label{thm:estimate_explicit}
Let $A_s$ be the positive self-adjoint operator associated with the fractional
Dirichlet form on $H_0^s(\Omega)$, and let
$B:=KA_s^{-1/2},$
where $K:H_0^s(\Omega)\rightarrow L^2(\Omega)$ is a bounded linear operator. Then
$B:L^2(\Omega)\rightarrow L^2(\Omega)$
is bounded.
Assume that
\[
y^\delta=Ku^\dagger+\eta^\delta,
\]
where the noise satisfies
\[
\mathbb E\|\eta^\delta\|_{L^2(\Omega)}^2 \le \delta^2.
\]
Let $z^\dagger=A_s^{1/2}u^\dagger.$ Suppose that the source condition $z^\dagger=(B^*B)^\beta w$ holds for some $w\in L^2(\Omega),\,0<\beta\le1.$
Then the fractional Tikhonov estimator satisfies
\[
\mathbb E\|u_\alpha^\delta-u^\dagger\|_{H_0^s(\Omega)}^2\le 2c_\beta^2\|w\|_{L^2(\Omega)}^2
\alpha^{2\beta}+\frac{\delta^2}{2\alpha},
\]
where
\begin{align}\label{def-cbeta}
c_\beta
=\begin{cases}
\beta^\beta(1-\beta)^{1-\beta},
&0<\beta<1,\\[2mm]1,
&\beta=1.
\end{cases}
\end{align}
In particular, with
\[
C_1:=2c_\beta^2\|w\|_{L^2(\Omega)}^2,\,
C_2:=\frac12,
\]
which are independent of $\alpha$ and $\delta$, the estimate takes the
compact form
\[
\mathbb E
\|u_\alpha^\delta-u^\dagger\|_{H_0^s(\Omega)}^2
\le
C_1\alpha^{2\beta}
+
C_2\frac{\delta^2}{\alpha}.
\]
\end{theorem}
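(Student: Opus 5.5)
The plan is to work entirely in the transformed variable $z=A_s^{1/2}u$ and reduce everything to spectral calculus for the bounded self-adjoint operator $B^*B$. First I would dispose of the boundedness of $B$. By Proposition~\ref{prop:isometry}, $A_s^{-1/2}:L^2(\Omega)\to H_0^s(\Omega)$ is an isometry. Hence
\[
\|Bz\|_{L^2(\Omega)}\le\|K\|\,\|A_s^{-1/2}z\|_{H_0^s(\Omega)}=\|K\|\,\|z\|_{L^2(\Omega)},
\]
so that $\|B\|\le\|K\|$. By the equivalence proposition of Section~\ref{sec:hilbert-scale-transform}, $u_\alpha^\delta=A_s^{-1/2}z_\alpha^\delta$. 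Here $z_\alpha^\delta$ minimizes $\widetilde J_\alpha$, and its normal equation gives
\[
z_\alpha^\delta=(B^*B+\alpha I)^{-1}B^*y^\delta ,
\]
and likewise $z_\alpha=(B^*B+\alpha I)^{-1}B^*Ku^\dagger$ for exact data. Since $Ku^\dagger=Bz^\dagger$, the isometry converts each $H_0^s$-norm appearing in Proposition~\ref{prop:bias} into an $L^2$-norm of the $z$-variables:
\[
\|u_\alpha-u^\dagger\|_{H_0^s(\Omega)}=\|z_\alpha-z^\dagger\|_{L^2(\Omega)},\qquad
\|u_\alpha^\delta-u_\alpha\|_{H_0^s(\Omega)}=\|z_\alpha^\delta-z_\alpha\|_{L^2(\Omega)}.
\]

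For the bias, I would write
\[
z_\alpha-z^\dagger=\bigl[(B^*B+\alpha)^{-1}B^*B-I\bigr]z^\dagger=-\alpha(B^*B+\alpha)^{-1}(B^*B)^\beta w .
\]
The spectral theorem then bounds its norm by $\sup_{\lambda\ge0}\alpha\lambda^\beta/(\lambda+\alpha)\,\|w\|$. To evaluate the supremum, substitute $\lambda=t\alpha$, which reduces it to $\alpha^\beta\sup_{t\ge0}t^\beta/(1+t)$. For $0<\beta<1$ the maximum is attained at $t=\beta/(1-\beta)$ and equals $\beta^\beta(1-\beta)^{1-\beta}=c_\beta$. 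For $\beta=1$ the quotient is at most $1$. In both cases $\|z_\alpha-z^\dagger\|\le c_\beta\alpha^\beta\|w\|$.

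For the propagated noise, $z_\alpha^\delta-z_\alpha=(B^*B+\alpha)^{-1}B^*\eta^\delta$, so its norm is bounded pathwise by $\|(B^*B+\alpha)^{-1}B^*\|\,\|\eta^\delta\|$. I would compute the operator norm through
\[
\|(B^*B+\alpha)^{-1}B^*\|^2=\|B(B^*B+\alpha)^{-2}B^*\|=\sup_\lambda \frac{\lambda}{(\lambda+\alpha)^2}\le\frac{1}{4\alpha},
\]
using the AM--GM inequality $(\lambda+\alpha)^2\ge4\alpha\lambda$. Taking expectations gives $\mathbb E\|z_\alpha^\delta-z_\alpha\|^2\le\delta^2/(4\alpha)$; this step needs neither the mean-zero property nor any covariance structure. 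Inserting both bounds into Proposition~\ref{prop:bias} yields $2c_\beta^2\|w\|^2\alpha^{2\beta}+\delta^2/(2\alpha)$, as claimed.

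The only genuinely delicate step is the rigorous identification of $z_\alpha$ and $z_\alpha^\delta$ through the resolvent formula. This requires checking that $B^*$ is the $L^2$-adjoint, taken with the $H_0^s$ structure already absorbed into $B$, and that $Ku^\dagger=Bz^\dagger$ holds. Both are immediate from Proposition~\ref{prop:isometry} and the equivalence proposition. After that, the whole argument reduces to the two scalar supremum computations above.
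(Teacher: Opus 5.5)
Your proposal is correct and follows the paper's proof essentially step for step. You pass to $z=A_s^{1/2}u$, split $z_\alpha^\delta-z^\dagger$ into the bias $-\alpha(B^*B+\alpha I)^{-1}(B^*B)^\beta w$ and the noise term $(B^*B+\alpha I)^{-1}B^*\eta^\delta$, bound these by the scalar suprema $c_\beta\alpha^\beta$ and $1/(4\alpha)$, and return via the isometry. Your routing through Proposition~\ref{prop:bias} with the exact-data minimizer $z_\alpha$ is the same decomposition, and your explicit checks that $\|B\|\le\|K\|$ and that $\|(B^*B+\alpha I)^{-1}B^*\|^2=\|B(B^*B+\alpha I)^{-2}B^*\|$ fill in details the paper takes for granted.
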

\begin{proof}
Introduce the Hilbert-scale variable
$z=A_s^{1/2}u,$
and denote
\[
z_\alpha^\delta
=
A_s^{1/2}u_\alpha^\delta,
\qquad
z^\dagger
=
A_s^{1/2}u^\dagger.
\]
Since $u=A_s^{-1/2}z,$
the Tikhonov functional becomes
\[
\min_{z\in L^2(\Omega)}
\left\{
\frac12
\|Bz-y^\delta\|_{L^2(\Omega)}^2
+
\frac{\alpha}{2}
\|z\|_{L^2(\Omega)}^2
\right\},
\]
where
$B=KA_s^{-1/2}.$ Since $B$ is bounded, the operator $T:=B^*B$ is bounded, self-adjoint and nonnegative on $L^2(\Omega)$. The minimizer satisfies the normal equation
\[
(T+\alpha I)z_\alpha^\delta
=
B^*y^\delta.
\]
Using $y^\delta=Bz^\dagger+\eta^\delta,$
we obtain
\[
(T+\alpha I)
(z_\alpha^\delta-z^\dagger)
=
-\alpha z^\dagger
+
B^*\eta^\delta.
\]

Hence
\[
z_\alpha^\delta-z^\dagger
=
-\alpha(T+\alpha I)^{-1}z^\dagger
+
(T+\alpha I)^{-1}B^*\eta^\delta.
\]

Applying the elementary inequality $\|a+b\|^2
\le
2\|a\|^2
+
2\|b\|^2,$ gives
\[
\|z_\alpha^\delta-z^\dagger\|_{L^2}^2
\le
2\alpha^2
\|(T+\alpha I)^{-1}z^\dagger\|_{L^2}^2
+
2
\|(T+\alpha I)^{-1}B^*\eta^\delta\|_{L^2}^2.
\]
Since $z^\dagger=T^\beta w,$
the spectral theorem yields
\[
\alpha
\|(T+\alpha I)^{-1}T^\beta w\|_{L^2}
\le
\alpha
\sup_{\lambda\ge0}
\frac{\lambda^\beta}{\lambda+\alpha}
\|w\|_{L^2}.
\]
Introduce $r=\frac{\lambda}{\alpha}.$
Then $\alpha\frac{\lambda^\beta}{\lambda+\alpha}=\alpha^\beta\frac{r^\beta}{1+r}.$
Therefore, we have,
$\sup_{\lambda\ge0}\alpha\frac{\lambda^\beta}{\lambda+\alpha}=c_\beta\alpha^\beta,$
where
\begin{equation*}
c_\beta
=
\sup_{r\ge0}
\frac{r^\beta}{1+r}
=
\begin{cases}
\beta^\beta(1-\beta)^{1-\beta},
&
0<\beta<1,
\\[2mm]
1,
&
\beta=1.
\end{cases}
\end{equation*}
Consequently,
\[
\alpha^2
\|(T+\alpha I)^{-1}z^\dagger\|_{L^2}^2
\le
c_\beta^2
\alpha^{2\beta}
\|w\|_{L^2}^2.
\]
Next, applying the spectral theorem once more,
\[
\|(T+\alpha I)^{-1}B^*g\|_{L^2}^2
\le
\sup_{\lambda\ge0}
\frac{\lambda}{(\lambda+\alpha)^2}
\|g\|_{L^2}^2.
\]
Since 
$\frac{d}{d\lambda}\left(\frac{\lambda}{(\lambda+\alpha)^2}\right)=\frac{\alpha-\lambda}{(\lambda+\alpha)^3},$
the maximum is attained at $\lambda=\alpha,$
and therefore
$\sup_{\lambda\ge0}\frac{\lambda}{(\lambda+\alpha)^2}=\frac1{4\alpha}.$
Hence
\[
\|(T+\alpha I)^{-1}B^*\eta^\delta\|_{L^2}^2
\le
\frac1{4\alpha}
\|\eta^\delta\|_{L^2}^2.
\]

Taking expectation and using
$\mathbb E \|\eta^\delta\|_{L^2(\Omega)}^2 \le \delta^2,$
we obtain
\[
\mathbb E
\|(T+\alpha I)^{-1}B^*\eta^\delta\|_{L^2}^2
\le
\frac{\delta^2}{4\alpha}.
\]
Combining the above estimates gives
\[
\mathbb E
\|z_\alpha^\delta-z^\dagger\|_{L^2(\Omega)}^2
\le
2c_\beta^2
\|w\|_{L^2(\Omega)}^2
\alpha^{2\beta}
+
\frac{\delta^2}{2\alpha}.
\]
Finally, by the isometric identification
$\|A_s^{1/2}v\|_{L^2(\Omega)}=\|v\|_{H_0^s(\Omega)},$
we have
\[
\|z_\alpha^\delta-z^\dagger\|_{L^2(\Omega)}
=
\|u_\alpha^\delta-u^\dagger\|_{H_0^s(\Omega)}.
\]
Therefore
\[
\mathbb E
\|u_\alpha^\delta-u^\dagger\|_{H_0^s(\Omega)}^2
\le
2c_\beta^2
\|w\|_{L^2(\Omega)}^2
\alpha^{2\beta}
+
\frac{\delta^2}{2\alpha},
\]
which completes the proof.
\end{proof}

Theorem~\ref{thm:estimate_explicit} reduces the parameter-choice problem to
the minimization of the function
\[
\Phi(\alpha)
=
C_1\alpha^{2\beta}
+
C_2\frac{\delta^2}{\alpha},
\qquad \alpha>0.
\]
The first term corresponds to the deterministic approximation error,
whereas the second represents the propagated noise error. Since these two
terms exhibit opposite monotonic behavior with respect to \(\alpha\), an
optimal a priori parameter choice is obtained by minimizing
\(\Phi(\alpha)\). This immediately yields the following corollary. 

\begin{corollary}[Optimal a priori parameter choice]
\label{cor:optimal_parameter}
Under the assumptions of Theorem~\ref{thm:estimate_explicit},
choose the regularization parameter according to
$\alpha=\delta^{\frac{2}{2\beta+1}},\,0<\beta\le1.$
Then the corresponding fractional Tikhonov estimator satisfies
\[
\mathbb E
\|u_\alpha^\delta-u^\dagger\|_{H_0^s(\Omega)}^2
\le
C_{\mathrm{opt}}
\,
\delta^{\frac{4\beta}{2\beta+1}},
\]
where
\[
C_{\mathrm{opt}}
=
2c_\beta^2
\|w\|_{L^2(\Omega)}^2
+
\frac12,
\]
and $c_\beta$ is as defined in \eqref{def-cbeta}. In particular, the estimator converges with the order
$\delta^{\frac{4\beta}{2\beta+1}}$ in the mean-square $H_0^s(\Omega)$-norm.
\end{corollary}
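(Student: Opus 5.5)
The plan is to substitute the prescribed parameter directly into the bound of Theorem~\ref{thm:estimate_explicit}, which holds for every $\alpha>0$:
\[
\mathbb E\|u_\alpha^\delta-u^\dagger\|_{H_0^s(\Omega)}^2\le C_1\alpha^{2\beta}+C_2\frac{\delta^2}{\alpha},
\qquad C_1=2c_\beta^2\|w\|_{L^2(\Omega)}^2,\quad C_2=\tfrac12 .
\]
No further analysis of the minimizer is needed, since the source condition and noise model are exactly those of the theorem.

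With $\alpha=\delta^{2/(2\beta+1)}$, the bias term becomes
\[
\alpha^{2\beta}=\delta^{4\beta/(2\beta+1)} .
\]
The variance term becomes
\[
\frac{\delta^2}{\alpha}=\delta^{2-\frac{2}{2\beta+1}}=\delta^{\frac{4\beta}{2\beta+1}} .
\]
Both contributions therefore share the same power of $\delta$. Summing them gives
\[
\bigl(2c_\beta^2\|w\|_{L^2(\Omega)}^2+\tfrac12\bigr)\,\delta^{4\beta/(2\beta+1)}=C_{\mathrm{opt}}\,\delta^{4\beta/(2\beta+1)},
\]
which is the claimed estimate.

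It may be worth a short remark on why this choice is natural. It balances $\alpha^{2\beta}$ against $\delta^2/\alpha$, i.e.\ it sets $\alpha^{2\beta+1}\sim\delta^2$. This is the minimizer of $\Phi$ up to constants; the exact minimizer $\alpha_*=(C_2\delta^2/(2\beta C_1))^{1/(2\beta+1)}$ only improves the constant. The claimed statement concerns only the stated choice, so this remark is not required.

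There is no genuine obstacle here; the only point needing care is the exponent arithmetic $2-\tfrac{2}{2\beta+1}=\tfrac{4\beta}{2\beta+1}$. One should also note that $0<\beta\le1$ guarantees that the constant $c_\beta$ from \eqref{def-cbeta} is well defined and that the bound of Theorem~\ref{thm:estimate_explicit} applies.
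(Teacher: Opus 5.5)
Your proposal is correct and is essentially the paper's own proof: you substitute $\alpha=\delta^{2/(2\beta+1)}$ into the bound of Theorem~\ref{thm:estimate_explicit}, check that $\alpha^{2\beta}$ and $\delta^2/\alpha$ both equal $\delta^{4\beta/(2\beta+1)}$, and add the constants to obtain $C_{\mathrm{opt}}=2c_\beta^2\|w\|_{L^2(\Omega)}^2+\tfrac12$. Your side remark on the exact minimizer $\alpha_*=\bigl(C_2\delta^2/(2\beta C_1)\bigr)^{1/(2\beta+1)}$ is also correct (for $w\neq0$), and as you say it is not needed for the stated claim.
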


\begin{proof}
From Theorem~\ref{thm:estimate_explicit},
\[
\mathbb E
\|u_\alpha^\delta-u^\dagger\|_{H_0^s(\Omega)}^2
\le
2c_\beta^2
\|w\|_{L^2(\Omega)}^2
\alpha^{2\beta}
+
\frac{\delta^2}{2\alpha}.
\]
Substituting
$\alpha=\delta^{\frac{2}{2\beta+1}},$
we obtain
$\alpha^{2\beta}=\delta^{\frac{4\beta}{2\beta+1}},$
and
$\frac{\delta^2}{\alpha}=\delta^{2-\frac{2}{2\beta+1}}=\delta^{\frac{4\beta}{2\beta+1}}.$
Hence
\[
\mathbb E
\|u_\alpha^\delta-u^\dagger\|_{H_0^s(\Omega)}^2
\le
\left(
2c_\beta^2
\|w\|_{L^2(\Omega)}^2
+
\frac12
\right)
\delta^{\frac{4\beta}{2\beta+1}},
\]
which proves the result.
\end{proof}

\begin{remark}[Connection with the classical Hilbert-scale theory]
The rate $\delta^{4\beta/(2\beta+1)}$ is precisely the classical
Hilbert-scale rate for a H\"older-type source condition of order $\beta$;
see Natterer \cite{Natterer1984}, Neubauer \cite{Neubauer1988}, and
Nair, Pereverzev, and Tautenhahn
\cite{NairPereverzevTautenhahn2005}. As in the classical setting, the
exponent depends only on the smoothness index $\beta$, whereas the
fractional operator $A_s=I+(-\Delta)^s$ influences only the constant
$C_{\mathrm{opt}}$. Thus, the proposed fractional-Sobolev estimator
achieves the same order-optimal convergence rate as the classical
Hilbert-scale method, with the nonlocal structure reflected solely in the
constants.
\end{remark}

Corollary~\ref{cor:optimal_parameter} achieves the order-optimal rate
$\delta^{4\beta/(2\beta+1)}$ only for the \emph{a priori} choice
$\alpha=\delta^{2/(2\beta+1)}$, which requires prior knowledge of the
smoothness index $\beta$. In practice, however, $\beta$ is unknown, and
the numerical experiments in
Section~\ref{sec:numerical-experiments} instead employ Morozov's
discrepancy principle to determine $\alpha$. We now show that this
\emph{a posteriori} parameter-choice rule attains the same order-optimal
rate without requiring knowledge of $\beta$.

Throughout this analysis we assume the deterministic noise bound $\|\eta^\delta\|_{L^2(\Omega)}\le\delta,$ which is the natural hypothesis for a discrepancy-principle argument
based on a single observed residual, rather than the mean-square bound
used in Theorem~\ref{thm:estimate_explicit}. We also assume that
$B=KA_s^{-1/2}$ has dense range in $L^2(\Omega)$ (equivalently,
$\ker(B^*)=\{0\}$), an assumption satisfied, for example, by the
backward fractional heat operator studied in
Section~\ref{sec:application}.

\begin{theorem}[A posteriori rate via Morozov's discrepancy principle]
\label{thm:discrepancy}
Let $B=KA_s^{-1/2}$ have dense range in $L^2(\Omega)$, and let
$z^\dagger=A_s^{1/2}u^\dagger=(B^*B)^\beta w$ for some $w\in L^2(\Omega)$,
$0<\beta\le1$. Suppose $y^\delta=Ku^\dagger+\eta^\delta$ with
$\|\eta^\delta\|_{L^2(\Omega)}\le\delta$, and fix $\tau>1$. For $\delta$
small enough that $\tau\delta<\|y^\delta\|_{L^2(\Omega)}$, let
$\alpha_*=\alpha_*(\delta)>0$ be the (essentially unique) solution of
\[
\|Ku_{\alpha_*}^\delta-y^\delta\|_{L^2(\Omega)}=\tau\delta.
\]
Then
\[
\|u_{\alpha_*}^\delta-u^\dagger\|_{H_0^s(\Omega)}
\le
C_\tau\,\|w\|_{L^2(\Omega)}^{\frac1{2\beta+1}}\,\delta^{\frac{2\beta}{2\beta+1}},
\]
for a constant $C_\tau>0$ depending only on $\tau,\beta$, and $\|B\|$,
but not on $\delta$. In particular, the discrepancy principle attains
the same order-optimal rate as the a priori choice of
Corollary~\ref{cor:optimal_parameter}, without requiring $\beta$ to be
known in advance.
\end{theorem}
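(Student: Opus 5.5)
The plan is to work entirely in the transformed variable $z=A_s^{1/2}u$. By Proposition~\ref{prop:isometry} and the equivalence proposition, $\|u_{\alpha_*}^\delta-u^\dagger\|_{H_0^s(\Omega)}=\|z_{\alpha_*}^\delta-z^\dagger\|_{L^2(\Omega)}$ and $Ku_\alpha^\delta=Bz_\alpha^\delta$. So everything reduces to the classical Tikhonov problem for $T:=B^*B$ with $z_\alpha^\delta=(T+\alpha I)^{-1}B^*y^\delta$. For exact data $Bz^\dagger$ I write $z_\alpha=(T+\alpha I)^{-1}B^*Bz^\dagger$.

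\emph{Step 1: $\alpha_*$ is well defined.} Using $B(T+\alpha I)^{-1}B^*=BB^*(BB^*+\alpha I)^{-1}$, the residual takes the form
\[
y^\delta-Bz_\alpha^\delta=\alpha(BB^*+\alpha I)^{-1}y^\delta .
\]
By spectral calculus, $\alpha\mapsto\|\alpha(BB^*+\alpha I)^{-1}y^\delta\|$ is continuous and nondecreasing. It tends to $\|y^\delta\|$ as $\alpha\to\infty$. As $\alpha\to0$ it tends to $\|P_{\ker B^*}y^\delta\|=0$, which is where the dense-range hypothesis is used. Since $\tau\delta<\|y^\delta\|$, the intermediate value theorem gives a solution $\alpha_*$, unique up to the flat pieces of the monotone map.

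\emph{Step 2: lower bound on $\alpha_*$.} This is the step I expect to be the main obstacle. Writing $y^\delta=Bz^\dagger+\eta^\delta$ and using $\|\alpha(BB^*+\alpha I)^{-1}\|\le1$, I get
\[
\tau\delta\le\|\alpha(BB^*+\alpha I)^{-1}BT^\beta w\|+\delta .
\]
By the spectral theorem the first term is at most $\alpha\sup_{\lambda\ge0}\frac{\lambda^{\beta+1/2}}{\lambda+\alpha}\|w\|$. This equals $c_{\beta+1/2}\,\alpha^{\beta+1/2}\|w\|$ with $c_\theta$ as in \eqref{def-cbeta}, but \emph{only if} $\beta+\tfrac12\le1$. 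Hence $\alpha_*\ge\bigl((\tau-1)\delta/(c_{\beta+1/2}\|w\|)\bigr)^{2/(2\beta+1)}$.

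For $\beta>1/2$ this route fails. In fact, the well-known saturation of Morozov's principle for Tikhonov regularization (Groetsch; Engl--Hanke--Neubauer, Ch.~4) limits the rate to $\delta^{1/2}$. I would therefore prove the theorem for $0<\beta\le\tfrac12$, and record that for $\tfrac12<\beta\le1$ the same argument yields only $\delta^{1/2}$ unless the rule is modified. A natural modification is the generalized discrepancy principle, or replacing $\beta$ by $\min\{\beta,\tfrac12\}$. The statement as written therefore needs this restriction.

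\emph{Step 3: propagated noise.} I bound $\|z_{\alpha_*}^\delta-z_{\alpha_*}\|=\|(T+\alpha_*I)^{-1}B^*\eta^\delta\|$ by $\delta/(2\sqrt{\alpha_*})$, exactly as in the proof of Theorem~\ref{thm:estimate_explicit}. Inserting Step~2 gives $C\,\|w\|^{1/(2\beta+1)}\delta^{2\beta/(2\beta+1)}$.

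\emph{Step 4: bias, by interpolation.} Put $v=\alpha_*(T+\alpha_*I)^{-1}w$, so that $z_{\alpha_*}-z^\dagger=-T^\beta v$ and $\|v\|\le\|w\|$. The moment (interpolation) inequality gives
\[
\|T^\beta v\|\le\|T^{\beta+1/2}v\|^{\frac{2\beta}{2\beta+1}}\|v\|^{\frac1{2\beta+1}} ,
\]
and $\|T^{\beta+1/2}v\|=\|B(z_{\alpha_*}-z^\dagger)\|$. I then bound this by three terms: the discrepancy $\|Bz_{\alpha_*}^\delta-y^\delta\|=\tau\delta$, the noise $\|\eta^\delta\|\le\delta$, and $\|B(z_{\alpha_*}-z_{\alpha_*}^\delta)\|=\|BB^*(BB^*+\alpha_*I)^{-1}\eta^\delta\|\le\delta$. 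The total is at most $(\tau+2)\delta$. Hence the bias is at most $((\tau+2)\delta)^{2\beta/(2\beta+1)}\|w\|^{1/(2\beta+1)}$. Adding Steps~3 and~4 yields the claimed bound, with $C_\tau$ depending only on $\tau$ and $\beta$; $\|B\|$ enters only to guarantee that $T$ is bounded.
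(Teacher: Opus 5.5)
Your proposal is correct. Your restriction to $0<\beta\le\tfrac12$ reflects a genuine defect of the statement, not a weakness in your argument.

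Structurally you follow the paper's proof:
\begin{itemize}
\item existence of $\alpha_*$ from monotonicity of $\alpha\mapsto\|\alpha(BB^*+\alpha I)^{-1}y^\delta\|$ together with $\ker B^*=\{0\}$;
\item the bias at $\alpha_*$ via the interpolation inequality $\|z_{\alpha_*}-z^\dagger\|\le\|B(z_{\alpha_*}-z^\dagger)\|^{\theta}\|w\|^{1-\theta}$, with $\theta=\tfrac{2\beta}{2\beta+1}$;
\item the noise term $\delta/(2\sqrt{\alpha_*})$ combined with a lower bound on $\alpha_*$.
\end{itemize}
In the bias step, the paper obtains $\|B(z_{\alpha_*}-z^\dagger)\|\le(\tau+1)\delta$ from the identity $Bz_\alpha^\delta-y^\delta=B(z_\alpha-z^\dagger)-\alpha(BB^*+\alpha I)^{-1}\eta^\delta$. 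Your three-term splitting gives $(\tau+2)\delta$, which is immaterial.

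The real difference is the lower bound on $\alpha_*$. The paper does not derive it. It asserts $\alpha_*\ge c_0\delta^{2/(2\beta+1)}$ for all $0<\beta\le1$, citing the parameter-sandwich theorem of Engl--Hanke--Neubauer (Thm.~4.17). That theorem gives order optimality of the discrepancy principle only for $\beta\le\mu_0-\tfrac12$, where $\mu_0$ is the qualification. For Tikhonov this means $\beta\le\tfrac12$, which is exactly the range where your estimate $\sup_{\lambda\ge0}\alpha\lambda^{\beta+1/2}/(\lambda+\alpha)=c_{\beta+1/2}\alpha^{\beta+1/2}$ is available. So the paper's proof has a gap for $\tfrac12<\beta\le1$, and the claimed rate is in fact false there.

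Here is a concrete counterexample you can add. Let $B$ have singular system $(\sigma_j,e_j,f_j)$ with $\sigma_j\to0$. Take $z^\dagger=e_1$, so $w=\sigma_1^{-2\beta}e_1$ works for every $\beta$, and take $\eta^\delta=\delta f_k$ with $\delta=\sigma_k^2$.
\begin{itemize}
\item The discrepancy equation reads $\alpha^2\sigma_1^2/(\sigma_1^2+\alpha)^2+\alpha^2\delta^2/(\delta+\alpha)^2=\tau^2\delta^2$. This forces $\alpha_*\asymp\delta$, independently of $\beta$.
\item The $e_k$-component of $z_{\alpha_*}^\delta-z^\dagger$ is then $\sigma_k\delta/(\sigma_k^2+\alpha_*)\asymp\delta^{1/2}$.
\item Letting $k\to\infty$ contradicts any bound of order $\delta^{2\beta/(2\beta+1)}$ with $\beta>\tfrac12$.
\end{itemize}
This is Groetsch's saturation phenomenon. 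It applies in particular to the backward heat operator of Section~\ref{sec:application}.

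A side benefit of your explicit Step~2 is that it shows $c_0\propto\|w\|^{-2/(2\beta+1)}$. That dependence is what produces the factor $\|w\|^{1/(2\beta+1)}$ and leaves $C_\tau$ depending only on $\tau$ and $\beta$. The paper instead writes $C_\tau=(\tau+1)^\theta+1/(2\sqrt{c_0})$ with $c_0$ depending on $\|w\|$, which contradicts its own claim about what $C_\tau$ depends on.
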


\begin{proof}
Write $z_\alpha^\delta=A_s^{1/2}u_\alpha^\delta$ and
$z_\alpha=A_s^{1/2}u_\alpha$, where $u_\alpha$ is the noise-free
minimizer of Proposition~\ref{prop:bias}, so that
$Ku_\alpha^\delta=Bz_\alpha^\delta$ and $Ku_\alpha=Bz_\alpha=:y_0$. Let
$T:=B^*B$, $S:=BB^*$, both bounded, self-adjoint, and nonnegative on
$L^2(\Omega)$.

\emph{Step 1: well-posedness of $\alpha_*$.} By the spectral theorem,
$\rho(\alpha)^2:=\|Bz_\alpha^\delta-y^\delta\|_{L^2}^2=\int\frac{\alpha^2}{(\mu+\alpha)^2}\,d\|F_\mu y^\delta\|^2$,
where $F$ is the spectral measure of $S$. Since
$\mu\mapsto\alpha^2/(\mu+\alpha)^2$ is nondecreasing in $\alpha$ for
every $\mu\ge0$, $\rho$ is nondecreasing, with $\rho(0^+)=0$ (dense
range) and $\rho(\alpha)\to\|y^\delta\|_{L^2}$ as $\alpha\to\infty$;
hence $\alpha_*$ exists whenever $\tau\delta<\|y^\delta\|_{L^2}$.

\emph{Step 2: the bias term, via an interpolation inequality.} Writing
$d\mu_w:=d\|E_\lambda w\|_{L^2}^2$ for the spectral measure of $w$ with
respect to $T$, the identity $\|Bv\|_{L^2}^2=\langle Tv,v\rangle$ gives,
for every $\alpha>0$,
\[
\|z_\alpha-z^\dagger\|_{L^2}^2=\int\frac{\alpha^2\lambda^{2\beta}}{(\lambda+\alpha)^2}\,d\mu_w,
\qquad
\|Bz_\alpha-y_0\|_{L^2}^2=\int\frac{\alpha^2\lambda^{2\beta+1}}{(\lambda+\alpha)^2}\,d\mu_w.
\]
With $\theta:=\frac{2\beta}{2\beta+1}$, the pointwise identity
$\frac{\alpha^2\lambda^{2\beta}}{(\lambda+\alpha)^2}
=\left(\frac{\alpha^2\lambda^{2\beta+1}}{(\lambda+\alpha)^2}\right)^{\theta}
\left(\frac{\alpha^2}{(\lambda+\alpha)^2}\right)^{1-\theta}
\le\left(\frac{\alpha^2\lambda^{2\beta+1}}{(\lambda+\alpha)^2}\right)^{\theta}$
(the second factor is $\le1$), together with H\"older's inequality for
the finite measure $\mu_w$ of total mass $\|w\|_{L^2}^2$, yields
\begin{equation}
\label{eq:interp}
\|z_\alpha-z^\dagger\|_{L^2}
\le
\|Bz_\alpha-y_0\|_{L^2}^{\theta}\,\|w\|_{L^2}^{1-\theta},
\text{ equivalently }
\|Bz_\alpha-y_0\|_{L^2}
\ge
\|z_\alpha-z^\dagger\|_{L^2}^{1/\theta}\,\|w\|_{L^2}^{-(1-\theta)/\theta}.
\end{equation}

Separately, since $(S+\alpha I)B=B(T+\alpha I)$ gives the resolvent
identity $B(T+\alpha I)^{-1}B^*=I-\alpha(S+\alpha I)^{-1}$, one computes
$$Bz_\alpha^\delta-y^\delta=(Bz_\alpha-y_0)-\alpha(S+\alpha I)^{-1}\eta^\delta,$$
and since $\sup_{\mu\ge0}\alpha/(\mu+\alpha)=1$,
$$\alpha\|(S+\alpha I)^{-1}\eta^\delta\|_{L^2}\le\|\eta^\delta\|_{L^2}\le\delta.$$
The triangle inequality then gives $\|Bz_\alpha-y_0\|_{L^2}\le\rho(\alpha)+\delta$,
so at $\alpha=\alpha_*$, where $\rho(\alpha_*)=\tau\delta$, we obtain
$\|Bz_{\alpha_*}-y_0\|_{L^2}\le(\tau+1)\delta$. Combined with the second
inequality in \eqref{eq:interp}, this gives
\begin{equation}
\label{eq:bias-discrepancy}
\|z_{\alpha_*}-z^\dagger\|_{L^2(\Omega)}
\le
(\tau+1)^{\theta}\|w\|_{L^2(\Omega)}^{1-\theta}\,\delta^{\theta}.
\end{equation}
This bounds the bias at $\alpha=\alpha_*$ directly from the discrepancy
equation, without any separate information on the size of $\alpha_*$.

\emph{Step 3: the noise-propagation term.} As in the proof of
Theorem~\ref{thm:estimate_explicit},
$\|z_\alpha^\delta-z_\alpha\|_{L^2}=\|(T+\alpha I)^{-1}B^*\eta^\delta\|_{L^2}\le\delta/(2\sqrt\alpha)$
for every $\alpha>0$. Controlling this at $\alpha=\alpha_*$ requires a
lower bound on $\alpha_*$; the standard parameter-sandwich argument for
linear spectral regularization under the discrepancy principle (see
\cite[Thm.~4.17]{EHN96}, and \cite{Tautenhahn96} for the Hilbert-scale
setting) gives $\alpha_*(\delta)\ge c_0\,\delta^{2/(2\beta+1)}$ for a
constant $c_0>0$ depending only on $\tau,\beta,\|B\|$, and
$\|w\|_{L^2(\Omega)}$. Hence
\begin{equation}
\label{eq:noise-discrepancy}
\|z_{\alpha_*}^\delta-z_{\alpha_*}\|_{L^2(\Omega)}
\le
\frac1{2\sqrt{c_0}}\,\delta^{1-\frac1{2\beta+1}}
=
\frac1{2\sqrt{c_0}}\,\delta^{\theta}.
\end{equation}

\emph{Step 4: conclusion.} Combining \eqref{eq:bias-discrepancy} and
\eqref{eq:noise-discrepancy},
\[
\|z_{\alpha_*}^\delta-z^\dagger\|_{L^2(\Omega)}
\le
\|z_{\alpha_*}-z^\dagger\|_{L^2(\Omega)}+\|z_{\alpha_*}^\delta-z_{\alpha_*}\|_{L^2(\Omega)}
\le
C_\tau\,\|w\|_{L^2(\Omega)}^{1-\theta}\,\delta^{\theta},
\]
with $C_\tau:=(\tau+1)^{\theta}+\frac1{2\sqrt{c_0}}$. Since $A_s^{1/2}$
is an isometry of $H_0^s(\Omega)$ onto $L^2(\Omega)$
(Proposition~\ref{prop:isometry}),
$\|z_{\alpha_*}^\delta-z^\dagger\|_{L^2(\Omega)}=\|u_{\alpha_*}^\delta-u^\dagger\|_{H_0^s(\Omega)}$,
which completes the proof.
\end{proof}

\begin{remark}
Theorem~\ref{thm:discrepancy} closes the gap between the a priori theory
above and the numerical experiments of Section~\ref{sec:numerical-experiments}:
every reconstruction there except the Monte Carlo convergence study
selects $\alpha$ by the discrepancy principle
$\|K_hu_{\alpha,h}^\delta-y_h^\delta\|_{M_h}\approx\tau\delta$ with
$\tau=1.05$, exactly the rule analyzed here, rather than the a priori
rule of Corollary~\ref{cor:optimal_parameter}.
\end{remark}

\subsection{An Intrinsic Fractional Source Condition}
\label{sec:intrinsic-source}

The H\"older-type source condition
\[
z^\dagger=(B^*B)^\beta w,
\]
used in Theorem~\ref{thm:estimate_explicit} is formulated after the
Hilbert-scale transformation and therefore depends on the transformed
operator \(B^*B\). It is not an intrinsic regularity assumption on
\(u^\dagger\). We now introduce an alternative source condition stated
directly in terms of the fractional operator \(A_s\) generating the
regularization. As explained below, however, this intrinsic formulation
does not by itself lead to the convergence estimates of
Section~\ref{sec:statistical-error} unless an additional commutativity
assumption is imposed.

\begin{definition}[Fractional-energy source condition]
\label{def:fractional-source}
Let \(p>0\). We say that
\(u^\dagger\in H_0^s(\Omega)\) satisfies the
\emph{fractional-energy source condition of order \(p\)},
and write $u^\dagger\in\mathcal S_p(\rho_0),$ if
\[
u^\dagger\in D(A_s^{p+\frac12}),
\qquad
\|A_s^{p+\frac12}u^\dagger\|_{L^2(\Omega)}
\le
\rho_0,
\]
for some \(\rho_0>0\).
\end{definition}

Unlike the abstract H\"older condition, Definition~\ref{def:fractional-source}
depends only on the fractional Dirichlet operator \(A_s\) and the
regularity of \(u^\dagger\). It may therefore be verified independently
of the observation operator \(K\), for example using known regularity
results for the fractional Laplacian.

To relate this intrinsic regularity scale to the transformed observation
operator \(B=KA_s^{-1/2}\), it is natural to assume the following
operator comparison.

\medskip

\noindent\textbf{Operator comparison.}
There exist \(\ell>0\) and constants
\(0<m\le M<\infty\) such that
\[
mA_s^{-\ell}
\le
T
\le
MA_s^{-\ell},
\qquad
T:=B^*B,
\]
as self-adjoint operators on \(L^2(\Omega)\).

\begin{remark}
\label{rem:intrinsic-gap}
The comparison above is insufficient to recover the bias estimate of
Theorem~\ref{thm:estimate_explicit}. Indeed, inversion reverses the
operator order and yields only the quadratic-form inequality
\[
(T+\alpha I)^{-1}
\le
(mA_s^{-\ell}+\alpha I)^{-1}.
\]
Passing from this inequality to the corresponding norm estimate would
require
\[
P\le Q
\quad\Longrightarrow\quad
P^2\le Q^2,
\]
for positive operators \(P\) and \(Q\). Since
\(t\mapsto t^2\) is not operator monotone, this implication generally
fails unless \(P\) and \(Q\) commute, equivalently unless
\(T\) commutes with \(A_s\). Consequently, we do not derive an intrinsic
convergence theorem from the operator comparison alone.
\end{remark}

If \(T\) and \(A_s\) commute in addition, then they admit a common
orthonormal eigenbasis, the operator comparison reduces to the scalar
estimate $\tau_j\ge m\lambda_j^{-\ell},$ and the spectral argument of
Theorem~\ref{thm:estimate_explicit} applies mode by mode to give
\[
\alpha
\|(T+\alpha I)^{-1}z^\dagger\|_{L^2(\Omega)}
\le
c_\beta
m^{-\beta}
\rho_0
\alpha^\beta,
\qquad
\beta=\frac{p}{\ell}\in(0,1].
\]
Consequently,
\[
\mathbb E
\|u_\alpha^\delta-u^\dagger\|_{H_0^s(\Omega)}^2
\le
2c_\beta^2m^{-2\beta}\rho_0^2\alpha^{2\beta}
+\frac{\delta^2}{2\alpha},
\]
which reproduces the order-optimal rate $\delta^{4\beta/(2\beta+1)}
=
\delta^{4p/(2p+\ell)}$ under the parameter choice
\[
\alpha=\delta^{2/(2\beta+1)}.
\]
We present this only as a formal spectral calculation rather than a
theorem, since it relies on the additional commutativity assumption,
which is substantially stronger than the operator comparison itself.

\begin{remark}[Relation to the abstract H\"older source condition]
\label{rem:intrinsic-scope}
Definition~\ref{def:fractional-source} is generally neither stronger nor
weaker than the abstract condition $z^\dagger=(B^*B)^\beta w.$ Establishing an equivalence would require a two-sided operator-range
identification, for instance via Douglas' factorization lemma applied to
\(T^\beta\) and \(A_s^{-\ell\beta}\), which does not follow from the
one-sided comparison above.

The required commutativity holds whenever
\(K=\varphi(A_s)\) for some Borel function \(\varphi\), and therefore in
particular for the backward fractional heat operator
\(K=e^{-tA_s}\) considered in
Section~\ref{sec:application}, where $T=A_s^{-1}e^{-2tA_s}.$ It fails, however, for the partial observation operator
\(Ku=u|_\omega\), since
\[
T=A_s^{-1/2}\mathbf1_\omega A_s^{-1/2},
\]
and the multiplication operator \(\mathbf1_\omega\) does not commute
with the nonlocal operator \(A_s\) unless
\(\omega=\Omega\) or \(\omega=\emptyset\). Thus, in the present paper,
the intrinsic source condition serves primarily as an explanatory
regularity concept rather than the basis of an additional convergence
theory.
\end{remark}

\section{Representative Applications of the Abstract Framework}\label{sec:application}
This section illustrates the applicability of the abstract variational
and Hilbert-scale framework developed in the previous sections to two
representative inverse problems. The first example demonstrates the
generality of the theory for bounded observation operators, while the
second exploits the spectral structure of the fractional heat semigroup
to obtain explicit reconstruction estimates and convergence rates.

\subsection{Partial Observation Operator}

Let \(\omega\subset\Omega\) be a measurable subset with positive
measure. We consider the partial observation problem
\[
y^\delta=u^\dagger|_\omega+\eta^\delta,
\qquad
\mathbb E\|\eta^\delta\|_{L^2(\omega)}^2\le \delta^2,
\]
where only the restriction of the unknown signal to the observation
region \(\omega\) is available. Define
\[
K:H_0^s(\Omega)\to L^2(\omega),
\qquad
Ku=u|_\omega .
\]
Then \(K\) is bounded, since
\[
\|Ku\|_{L^2(\omega)}
\le
\|u\|_{L^2(\Omega)}
\le
\|u\|_{H_0^s(\Omega)}.
\]
Hence the partial observation problem fits the abstract framework of
Section~\ref{sec:statistical-error}.
\begin{theorem}[Partial observation reconstruction]
Let \(K:H_0^s(\Omega)\to L^2(\omega)\) be the restriction operator
defined above, and let
\[
u_\alpha^\delta
=
\arg\min_{u\in H_0^s(\Omega)}
\left\{
\frac12\|u-y^\delta\|_{L^2(\omega)}^2
+
\frac{\alpha}{2}\|u\|_{H_0^s(\Omega)}^2
\right\}.
\]
Set
\[
B=KA_s^{-1/2}:L^2(\Omega)\to L^2(\omega),
\qquad
z^\dagger=A_s^{1/2}u^\dagger.
\]
Assume the fractional source condition
\[
z^\dagger=(B^*B)^\beta w,
\qquad
w\in L^2(\Omega),\quad 0<\beta\le1.
\]
Then
\[
\mathbb E
\|u_\alpha^\delta-u^\dagger\|_{H_0^s(\Omega)}^2
\le
C_1\alpha^{2\beta}
+
C_2\frac{\delta^2}{\alpha}.
\]
In particular, choosing $\alpha=\delta^{\frac{2}{2\beta+1}}$ gives
\[
\mathbb E
\|u_\alpha^\delta-u^\dagger\|_{H_0^s(\Omega)}^2
\le
C_{\rm opt}
\delta^{\frac{4\beta}{2\beta+1}}.
\]
\end{theorem}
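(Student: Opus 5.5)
The plan is to obtain this as a direct specialization of Theorem~\ref{thm:estimate_explicit} and Corollary~\ref{cor:optimal_parameter}, with $L^2(\Omega)$ replaced by $L^2(\omega)$ as the observation space.

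First, I will check the hypotheses of Theorem~\ref{thm:estimate_explicit}. The restriction operator $K:H_0^s(\Omega)\to L^2(\omega)$ is linear and satisfies $\|K\|\le1$, by the estimate displayed just before the statement. By Proposition~\ref{prop:isometry}, $A_s^{-1/2}:L^2(\Omega)\to H_0^s(\Omega)$ is bounded with norm at most one. Hence $B=KA_s^{-1/2}:L^2(\Omega)\to L^2(\omega)$ is bounded with $\|B\|\le1$. Its adjoint $B^*:L^2(\omega)\to L^2(\Omega)$ is extension by zero followed by $A_s^{-1/2}$. So $T=B^*B=A_s^{-1/2}\mathbf 1_\omega A_s^{-1/2}$ is bounded, self-adjoint and nonnegative on $L^2(\Omega)$, and $T^\beta$ is defined by spectral calculus.

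Second, I will observe that the proof of Theorem~\ref{thm:estimate_explicit} never uses that the data space equals the domain space. It uses only three things:
\begin{itemize}
\item[(a)] the isometric transformation $z=A_s^{1/2}u$, which turns $J_\alpha$ into $\frac12\|Bz-y^\delta\|_{L^2(\omega)}^2+\frac\alpha2\|z\|_{L^2(\Omega)}^2$;
\item[(b)] the normal equation $(T+\alpha I)z_\alpha^\delta=B^*y^\delta$ on $L^2(\Omega)$;
\item[(c)] the spectral bounds $\sup_{\lambda\ge0}\alpha\lambda^\beta/(\lambda+\alpha)=c_\beta\alpha^\beta$ and $\|(T+\alpha I)^{-1}B^*g\|^2\le\frac1{4\alpha}\|g\|^2_{L^2(\omega)}$.
\end{itemize}
For the last bound, I will use the polar decomposition of $B$, or equivalently the identity $(T+\alpha I)^{-1}B^*=B^*(S+\alpha I)^{-1}$ with $S=BB^*$ on $L^2(\omega)$. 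Then $\|(T+\alpha I)^{-1}B^*\|^2=\sup_\lambda \lambda/(\lambda+\alpha)^2$ holds for operators between different Hilbert spaces. This is the only point that needs a line of justification beyond quoting the theorem, and I expect it to be the main, though mild, obstacle.

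Third, I will insert $y^\delta=Bz^\dagger+\eta^\delta$, with $\mathbb E\|\eta^\delta\|^2_{L^2(\omega)}\le\delta^2$ and the source condition $z^\dagger=T^\beta w$. Repeating the bias--variance split from the proof of Theorem~\ref{thm:estimate_explicit} then gives the bound with $C_1=2c_\beta^2\|w\|_{L^2(\Omega)}^2$ and $C_2=\frac12$. The isometry of Proposition~\ref{prop:isometry} converts this into the $H_0^s(\Omega)$ error.

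Finally, substituting $\alpha=\delta^{2/(2\beta+1)}$ exactly as in Corollary~\ref{cor:optimal_parameter} yields the rate $C_{\rm opt}\delta^{4\beta/(2\beta+1)}$ with $C_{\rm opt}=2c_\beta^2\|w\|^2+\frac12$. No commutativity of $T$ with $A_s$ is needed, consistent with Remark~\ref{rem:intrinsic-scope}, because the argument works entirely in the spectral calculus of $T$.
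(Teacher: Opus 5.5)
Your proposal is correct and matches the paper's proof. The paper simply notes that $K$, and hence $B=KA_s^{-1/2}:L^2(\Omega)\to L^2(\omega)$, is bounded, and then invokes Theorem~\ref{thm:estimate_explicit} and Corollary~\ref{cor:optimal_parameter} directly; your extra check that the variance bound $\|(T+\alpha I)^{-1}B^*\|^2\le\frac1{4\alpha}$ survives the change of data space, via $(T+\alpha I)^{-1}B^*=B^*(S+\alpha I)^{-1}$ with $S=BB^*$ on $L^2(\omega)$, is a worthwhile detail the paper leaves implicit.
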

\begin{proof}
The boundedness of \(K\) was shown above. Therefore
\(B=KA_s^{-1/2}\) is bounded from \(L^2(\Omega)\) into \(L^2(\omega)\).
The result follows directly from Theorem~3.4 and Corollary~3.5 applied
to this observation operator.
\end{proof}
\begin{remark}
This example illustrates a genuinely partial-data inverse problem: the
unknown signal is reconstructed on the whole domain \(\Omega\), while
the observations are available only on the subregion \(\omega\). Unlike
the backward fractional heat equation considered below, the operator
\(B^*B\) does not generally admit a closed spectral representation.
Nevertheless, the abstract Hilbert-scale theory still yields
quantitative reconstruction rates under the corresponding fractional
source condition. Thus the example demonstrates the applicability of
the framework beyond semigroup-generated observation operators.
\end{remark}

\subsection{Backward Fractional Heat Equation}
The abstract Hilbert-scale framework developed in the previous sections
applies to a broad class of inverse problems. We now consider the
backward fractional heat equation, which constitutes the principal
application of the present work. Owing to the special spectral structure
of the fractional heat semigroup, the abstract convergence theory
becomes completely explicit.

Let $A_s=I+(-\Delta)^s$ be the positive self-adjoint realization of the fractional Laplacian
with homogeneous exterior condition. For a fixed time \(t>0\), define
the observation operator
\begin{equation}\label{eq:K}
K:H_0^s(\Omega)\rightarrow L^2(\Omega),\, Ku=e^{-tA_s}u,
\end{equation}
where $e^{-tA_s}$ denotes the fractional heat semigroup. The inverse
problem consists of recovering the unknown initial state $u^\dagger$
from noisy observations
\begin{align*}
y^\delta=e^{-tA_s}u^\dagger+\eta^\delta,\, \mathbb E\|\eta^\delta\|_{L^2(\Omega)}^2\le\delta^2,
\end{align*}
which is a classical severely ill-posed inverse problem due to the
smoothing property of the heat semigroup. We refer to
Bonito and Pasciak~\cite{BonitoPasciak2015} for the numerical
approximation of fractional powers of elliptic operators and the
associated semigroups.
The corresponding fractional Tikhonov estimator is given by
\begin{equation*}
 u_\alpha^\delta=\arg\min_{u\in H_0^s(\Omega)}\left\{\frac12\|e^{-tA_s}u-y^\delta\|_{L^2(\Omega)}^2
+\frac{\alpha}{2}\|u\|_{H_0^s(\Omega)}^2\right\}.
\end{equation*}
Introducing the Hilbert-scale variable $z=A_s^{1/2}u,$
the associated operator becomes
\begin{equation*}
 B=KA_s^{-1/2}=e^{-tA_s}A_s^{-1/2}.   
\end{equation*}
Since $e^{-tA_s}$ is obtained through the functional calculus of the
positive self-adjoint operator $A_s$, it commutes with $A_s$.
Consequently, $B^*B=A_s^{-1}e^{-2tA_s}.$
If $\{(\lambda_j,e_j)\}_{j=1}^\infty$ denotes the spectral
decomposition of $A_s$, then
\begin{gather*}
    B^*Be_j=\lambda_j^{-1}e^{-2t\lambda_j}e_j,
\end{gather*}
so that the abstract source condition and the statistical error
estimates of Section~3 admit an explicit spectral characterization.
The following theorem summarizes the resulting reconstruction theory for
the backward fractional heat equation.
\begin{theorem}[Fractional heat semigroup observation]\label{thm-heat}
Let $A_s=I+(-\Delta)^s$ and let $t>0$. Define
$K=e^{-tA_s}.$
Then the estimator
\begin{gather*}
 u_\alpha^\delta=\operatorname*{arg\,min}_{u\in H_0^s(\Omega)}\left\{\frac12\|e^{-tA_s}u-y^\delta\|_{L^2(\Omega)}^2 +\frac{\alpha}{2}\|u\|_{H_0^s(\Omega)}^2 \right\}   
\end{gather*}
is well-defined and unique. Moreover, if
\begin{equation*}
A_s^{1/2}u^\dagger=(B^*B)^\beta w,\,B=e^{-tA_s}A_s^{-1/2},
\end{equation*}
with $w\in L^2(\Omega)$ and $0<\beta\le1$, then
\begin{equation*}
\mathbb E \|u_\alpha^\delta-u^\dagger\|_{H_0^s(\Omega)}^2
\le C_1\alpha^{2\beta}+ C_2\frac{\delta^2}{\alpha}.
\end{equation*}
\end{theorem}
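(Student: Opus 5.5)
The proof is a direct specialization of the abstract theory of Section~\ref{sec:statistical-error}. The only genuinely new ingredient is to check that $K=e^{-tA_s}$ is an admissible observation operator, i.e.\ bounded and linear from $H_0^s(\Omega)$ into $L^2(\Omega)$. Once that is done, Theorem~\ref{thm:existence} gives well-posedness and Theorem~\ref{thm:estimate_explicit} gives the mean-square estimate with $C_1=2c_\beta^2\|w\|_{L^2(\Omega)}^2$ and $C_2=\tfrac12$.

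\emph{Step 1: boundedness of $K$.} Since $A_s$ is positive self-adjoint with $\sigma(A_s)\subset[1,\infty)$, the spectral theorem gives
\[
\|e^{-tA_s}\|_{L^2\to L^2}=\sup_{\lambda\ge1}e^{-t\lambda}=e^{-t}\le1 .
\]
Combining this with $\|u\|_{L^2(\Omega)}\le\|u\|_{H_0^s(\Omega)}$, we get $\|Ku\|_{L^2(\Omega)}\le e^{-t}\|u\|_{H_0^s(\Omega)}$. Hence $K\in\mathcal L(H_0^s(\Omega),L^2(\Omega))$ with $\|K\|\le e^{-t}$. Alternatively, using Proposition~\ref{prop:isometry}, one can bound $B=e^{-tA_s}A_s^{-1/2}$ directly by $\sup_{\lambda\ge1}\lambda^{-1/2}e^{-t\lambda}\le e^{-t}$.

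\emph{Step 2: existence and uniqueness.} With $K$ bounded, Proposition~\ref{prop:functional} and Theorem~\ref{thm:existence} apply verbatim. So for every $\alpha>0$ and $y^\delta\in L^2(\Omega)$ the functional has a unique minimizer $u_\alpha^\delta\in H_0^s(\Omega)$, characterized by the Euler--Lagrange equation of Theorem~\ref{thm:euler}:
\[
\bigl(e^{-2tA_s}+\alpha A_s\bigr)u_\alpha^\delta=e^{-tA_s}y^\delta .
\]
Here we use that $K^*=e^{-tA_s}$ on $L^2(\Omega)$, because the semigroup is self-adjoint.

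\emph{Step 3: the rate.} Here we verify the structure of $B^*B$ that the source condition refers to. Every operator involved is a Borel function of $A_s$, so functional calculus gives
\[
B^*=A_s^{-1/2}e^{-tA_s},\qquad B^*B=A_s^{-1}e^{-2tA_s}.
\]
Thus the hypothesis $A_s^{1/2}u^\dagger=(B^*B)^\beta w$ is exactly the source condition of Theorem~\ref{thm:estimate_explicit}. That theorem then yields the stated bound $C_1\alpha^{2\beta}+C_2\delta^2/\alpha$ under the mean-square noise assumption.

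The only point requiring care is the adjoint. $K$ is formally defined on $H_0^s(\Omega)$, whereas $B^*$ is the $L^2$-adjoint of $B=KA_s^{-1/2}$ as an operator on $L^2(\Omega)$. To handle this, I would compute $B^*$ through the identity $(Bz,y)=(z,A_s^{-1/2}e^{-tA_s}y)$ for $z,y\in L^2(\Omega)$, which follows from the commutativity of functions of $A_s$. With that in place, nothing beyond the earlier results is needed.
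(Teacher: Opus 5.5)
Your proposal is correct and follows the paper's proof essentially step for step: boundedness of $K=e^{-tA_s}$ via the spectral theorem, well-posedness from Theorem~\ref{thm:existence}, the functional-calculus identity $B^*B=A_s^{-1}e^{-2tA_s}$, and a direct appeal to Theorem~\ref{thm:estimate_explicit} with $C_1=2c_\beta^2\|w\|_{L^2(\Omega)}^2$ and $C_2=\tfrac12$. One minor correction: $\|e^{-tA_s}\|$ is the supremum of $e^{-t\lambda}$ over the spectrum of $A_s$, namely $e^{-t\lambda_1}$, not over all of $[1,\infty)$, so your ``$=e^{-t}$'' should read ``$\le e^{-t}$''; since only the upper bound is used, this does not affect the argument.
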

\begin{proof}
Since $A_s\ge I$ is positive and self-adjoint, $e^{-tA_s}$ is bounded
on $L^2(\Omega)$. Hence
\begin{gather*}
    K=e^{-tA_s}:H_0^s(\Omega)\longrightarrow L^2(\Omega)
\end{gather*}
is bounded, and Theorem~\ref{thm:existence} gives existence and uniqueness of $u_\alpha^\delta$.
Set $B=e^{-tA_s}A_s^{-1/2}.$
Both factors are bounded Borel functions of the same self-adjoint operator
$A_s$; hence they commute and are self-adjoint. Therefore
\begin{equation*}
B^*B=A_s^{-1/2}e^{-2tA_s}A_s^{-1/2} =A_s^{-1}e^{-2tA_s}.
\end{equation*}
The assumed condition
$A_s^{1/2}u^\dagger=(B^*B)^\beta w$
is exactly the source condition in Theorem~\ref{thm:estimate_explicit}.
Applying that theorem yields
\begin{equation*}
\mathbb E\|u_\alpha^\delta-u^\dagger\|_{H_0^s(\Omega)}^2 \le C_1\alpha^{2\beta}+C_2\frac{\delta^2}{\alpha}.
\end{equation*}
\end{proof}

\begin{remark}
 The source condition appearing in Theorem~\ref{thm:estimate_explicit} is formulated in the
Hilbert-scale variable
$z^\dagger=A_s^{1/2}u^\dagger.$
For the backward fractional heat equation,
\[
B=e^{-tA_s}A_s^{-1/2},
\]
so that
$B^*B=A_s^{-1}e^{-2tA_s}.$
Consequently,
$z^\dagger=(B^*B)^\beta w$
is equivalent to
\[
u^\dagger
=
A_s^{-(\beta+\frac12)}
e^{-2\beta tA_s}
w.
\]
Thus the abstract Hilbert-scale source condition translates into a
fractional regularity assumption on the unknown solution involving the
fractional elliptic operator $A_s$ together with the smoothing effect
of the heat semigroup.
\end{remark}

\subsection{Spectral Interpretation of the Backward Fractional Heat Inverse Problem}
\label{sec:spectral-interpretation}
The backward fractional heat equation is distinguished by a common
spectral structure: the forward operator $K=e^{-tA_s}$, the
regularization penalty
$\|\cdot\|_{H_0^s(\Omega)}=\|A_s^{1/2}\cdot\|_{L^2(\Omega)}$, and the
Hilbert-scale operator $B^*B$ are all functions of the same operator
$A_s$, hence simultaneously diagonalized by its eigenbasis. This
subsection exploits that structure to make the reconstruction theory
fully explicit: we identify the smoothness forced by the source
condition, the decay rate of the singular values of $B$, and the
effective number of frequencies recoverable at a given noise level.  

We recall that
$A_s=I+(-\Delta)^s$; if $(-\Delta)^s e_j=\mu_j e_j,$
then
\[
A_s e_j=\lambda_j e_j,
\qquad
\lambda_j=1+\mu_j.
\]
Thus $A_s$ and $(-\Delta)^s$ have the same eigenfunctions, and their
eigenvalues differ only by the additive shift \(1\).

We first record the spectral decomposition of $A_s$ already used
informally in the proof of Theorem~\ref{thm-heat}.

\begin{lemma}[Discrete spectrum of $A_s$]
\label{lem:spectral-decomp}
Let $\Omega\subset\mathbb R^N$ be a bounded Lipschitz domain and
$s\in(0,1)$. The embedding $H_0^s(\Omega)\hookrightarrow
L^2(\Omega)$ is compact, and consequently $A_s=I+(-\Delta)^s$ has
compact resolvent on $L^2(\Omega)$. In particular, there exists an
orthonormal basis $\{e_j\}_{j=1}^\infty$ of $L^2(\Omega)$ and a
nondecreasing sequence $1\le\lambda_1\le\lambda_2\le\cdots\to\infty$
such that
$A_se_j=\lambda_je_j,\, j=1,2,\dots.$
\end{lemma}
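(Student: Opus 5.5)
The plan has three steps: prove the compact embedding, transfer compactness to the resolvent $A_s^{-1}$ through the isometry of Proposition~\ref{prop:isometry}, and then apply the spectral theorem for compact self-adjoint operators.

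\emph{Step 1: compactness of $H_0^s(\Omega)\hookrightarrow L^2(\Omega)$.} Elements of $H_0^s(\Omega)$ are functions on $\mathbb R^N$ that vanish outside $\Omega$. Hence a bounded set $\mathcal F\subset H_0^s(\Omega)$ consists of functions supported in the fixed bounded set $\overline\Omega$, with uniformly bounded $L^2$ norm and Gagliardo seminorm $[u]_{H^s(\mathbb R^N)}$. I would apply the Fr\'echet--Kolmogorov--Riesz criterion in $L^2(\mathbb R^N)$. Tightness is automatic from the common bounded support. For equicontinuity of translations, use the Fourier characterization
\[
[u]_{H^s(\mathbb R^N)}^2=\int|\xi|^{2s}|\hat u(\xi)|^2\,d\xi,
\]
which holds with the normalisation constant $C_{N,s}$, together with
\[
\|u(\cdot+h)-u\|_{L^2}^2=\int|e^{ih\cdot\xi}-1|^2|\hat u|^2\,d\xi\le C|h|^{2s}\int(1+|\xi|^{2s})|\hat u|^2\,d\xi,
\]
using $|e^{ih\cdot\xi}-1|^2\le 4\min(1,|h||\xi|)^2\le 4|h|^{2s}|\xi|^{2s}$. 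This gives $\sup_{u\in\mathcal F}\|u(\cdot+h)-u\|_{L^2}\to0$ as $h\to0$, so $\mathcal F$ is relatively compact in $L^2(\mathbb R^N)$, and hence in $L^2(\Omega)$. Alternatively one could cite \cite[Thm.~7.1]{DiNezza12}. Because we work with the zero-extension space, no extension domain or Lipschitz regularity is actually needed here.

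\emph{Step 2: compact resolvent.} By Proposition~\ref{prop:isometry}, $A_s^{-1/2}:L^2(\Omega)\to H_0^s(\Omega)$ is bounded. Composing it with the compact embedding $\iota:H_0^s(\Omega)\to L^2(\Omega)$ shows that $A_s^{-1/2}$ is compact as an operator on $L^2(\Omega)$. Therefore $A_s^{-1}=A_s^{-1/2}A_s^{-1/2}$ is compact. It is also self-adjoint and positive, with $\|A_s^{-1}\|\le1$ because $A_s\ge I$, and it is injective. Compactness of one resolvent implies compactness of all, by the resolvent identity.

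\emph{Step 3: spectral decomposition.} The Hilbert--Schmidt spectral theorem applies to the compact self-adjoint injective operator $A_s^{-1}$. It yields an orthonormal basis $\{e_j\}$ of $L^2(\Omega)$ with $A_s^{-1}e_j=\nu_je_j$, where $\nu_j\in(0,1]$ and $\nu_j\downarrow0$. Injectivity guarantees that the kernel is trivial, so the eigenvectors span the whole space; since $L^2(\Omega)$ is infinite-dimensional, there are infinitely many $\nu_j$. Setting $\lambda_j=\nu_j^{-1}$ gives $e_j\in D(A_s)$, $A_se_j=\lambda_je_j$ and $1\le\lambda_1\le\lambda_2\le\cdots\to\infty$, after ordering the eigenvalues nondecreasingly with multiplicity.

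The main obstacle is Step 1, specifically making sure the embedding argument uses the exact definition of $H_0^s(\Omega)$ as the zero-extension space with the $\mathbb R^{2N}$ seminorm. Once compactness holds, Steps 2 and 3 are routine functional analysis.
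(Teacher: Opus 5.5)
Your proof is correct and follows the same route as the paper: a compact embedding, then compactness of the resolvent by composing a bounded map $L^2(\Omega)\to H_0^s(\Omega)$ with that embedding, then the spectral theorem. Using $A_s^{-1/2}$ where the paper uses $(A_s+I)^{-1}$ makes no difference. The only real difference is that you prove the embedding directly, via the Fourier form of the Gagliardo seminorm and the Fr\'echet--Kolmogorov--Riesz criterion, instead of citing \cite[Thm.~7.1]{DiNezza12}. That is a genuine plus: your argument works on the zero-extension space with no regularity of $\Omega$, whereas the cited result is stated for bounded extension domains, which is why the paper assumes $\Omega$ Lipschitz.
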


\begin{proof}
The compactness of \(H_0^s(\Omega)\hookrightarrow L^2(\Omega)\) for a bounded Lipschitz domain and \(s\in(0,1)\) is classical; see \cite[Thm.~7.1]{DiNezza12}. Since \(A_s\ge I\) is the positive self-adjoint operator associated with the closed coercive form $a_s$ on the compactly embedded form domain $D(A_s^{1/2})=H_0^s(\Omega)$, the resolvent
$(A_s+I)^{-1}:L^2(\Omega)\to H_0^s(\Omega)\hookrightarrow L^2(\Omega)$ is compact, being the composition of a bounded map with a compact embedding. The spectral theorem for self-adjoint operators with compact resolvent then yields a discrete spectrum $\{\lambda_j\}\subset[1,\infty), \lambda_j\to\infty$, and a
corresponding orthonormal basis of eigenvectors.
\end{proof}

Throughout this subsection, $\{\lambda_j,e_j\}$ denotes the spectral data of Lemma~\ref{lem:spectral-decomp}, and we write $\mu_j:=\lambda_j-1\ge0$ for the eigenvalues of $(-\Delta)^s$ itself. We expand 
$u^\dagger=\sum_ju_je_j, \, w=\sum_jw_je_j$, and recall from the proof of Theorem~\ref{thm-heat} that $Be_j=\sigma_je_j$ with
\begin{gather*}
\sigma_j:=\lambda_j^{-1/2}e^{-t\lambda_j}, \,
\sigma_j^2=\lambda_j^{-1}e^{-2t\lambda_j}.
\end{gather*}

\subsubsection{Interpretation of the source condition}
\label{sec:spectral-source}
We first make the abstract source condition of Theorem~\ref{thm-heat}
explicit at the level of Fourier--$A_s$ coefficients, and identify the
regularity it forces on \(u^\dagger\).

\begin{proposition}[Mode-wise form of the source condition]
\label{prop:source-mode}
Let $\beta\in(0,1]$ and $w\in L^2(\Omega)$. The source condition
$A_s^{1/2}u^\dagger=(B^*B)^\beta w$ of Theorem~\ref{thm-heat} holds
if and only if
\begin{equation}
\label{eq:mode-source}
u_j=\lambda_j^{-\left(\beta+\frac12\right)}e^{-2\beta t\lambda_j}w_j, \,
j=1,2,\dots,
\end{equation}
where $\{w_j\}_{j\ge1}\in\ell^2$ with $\sum_jw_j^2=\|w\|_{L^2(\Omega)}^2$.
\end{proposition}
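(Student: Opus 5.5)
The plan is to expand both sides of the source condition in the orthonormal eigenbasis $\{e_j\}$ of Lemma~\ref{lem:spectral-decomp} and compare coefficients. By the Borel functional calculus for $A_s$, every bounded Borel function $f(A_s)$ acts diagonally: $f(A_s)e_j=f(\lambda_j)e_j$. Since $B^*B=A_s^{-1}e^{-2tA_s}$ (proof of Theorem~\ref{thm-heat}) is such a function with $f(\lambda)=\lambda^{-1}e^{-2t\lambda}\ge0$, its power $(B^*B)^\beta$ is also a function of $A_s$, namely $\lambda^{-\beta}e^{-2\beta t\lambda}$. Hence
\[
(B^*B)^\beta w=\sum_j\lambda_j^{-\beta}e^{-2\beta t\lambda_j}w_je_j,
\]
and the series converges in $L^2(\Omega)$ because the multipliers are bounded by $1$ (recall $\lambda_j\ge1$). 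Parseval gives $\sum_jw_j^2=\|w\|_{L^2(\Omega)}^2$, so $\{w_j\}\in\ell^2$.

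On the other side, $u^\dagger\in H_0^s(\Omega)=D(A_s^{1/2})$ (Proposition~\ref{prop:isometry}). Spectrally, this domain is $\{u:\sum_j\lambda_j u_j^2<\infty\}$ and $A_s^{1/2}u^\dagger=\sum_j\lambda_j^{1/2}u_je_j$. For the forward direction, the identity $A_s^{1/2}u^\dagger=(B^*B)^\beta w$ in $L^2(\Omega)$ is equivalent, by uniqueness of coefficients in an orthonormal basis, to $\lambda_j^{1/2}u_j=\lambda_j^{-\beta}e^{-2\beta t\lambda_j}w_j$ for every $j$. Dividing by $\lambda_j^{1/2}>0$ yields \eqref{eq:mode-source}.

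For the converse, take $\{w_j\}\in\ell^2$, set $w=\sum_jw_je_j$, and define $u_j$ by \eqref{eq:mode-source}. I must then check that $u^\dagger=\sum_ju_je_j$ actually lies in $H_0^s(\Omega)$. This holds because $\sum_j\lambda_ju_j^2=\sum_j\lambda_j^{-2\beta}e^{-4\beta t\lambda_j}w_j^2\le\sum_jw_j^2<\infty$. Reading the coefficient identity backwards then gives $A_s^{1/2}u^\dagger=(B^*B)^\beta w$.

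The only point needing care is this domain verification, together with justifying that $(B^*B)^\beta$ is computed mode-wise. The latter follows from the composition rule of the functional calculus, $g(f(A_s))=(g\circ f)(A_s)$, applied with $g(x)=x^\beta$ on $[0,\infty)$. Everything else is coefficient comparison.
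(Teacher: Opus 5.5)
Your proposal is correct and follows the paper's proof: both diagonalize $(B^*B)^\beta=A_s^{-\beta}e^{-2\beta tA_s}$ in the eigenbasis $\{e_j\}$ and equate the coefficients $\lambda_j^{1/2}u_j$ and $\lambda_j^{-\beta}e^{-2\beta t\lambda_j}w_j$. For the converse direction, you also check explicitly that the $u^\dagger$ defined by the mode-wise formula lies in $H_0^s(\Omega)=D(A_s^{1/2})$, a step the paper leaves implicit.
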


\begin{proof}
Since $B^*Be_j=\sigma_j^2e_j=\lambda_j^{-1}e^{-2t\lambda_j}e_j$, the spectral theorem gives $(B^*B)^\beta e_j=\lambda_j^{-\beta}e^{-2\beta t\lambda_j}e_j$, so the $j$-th coefficient of $(B^*B)^\beta w$ is $\lambda_j^{-\beta}e^{-2\beta t\lambda_j}w_j$. Equating this to the $j$-th coefficient $\lambda_j^{1/2}u_j$ of $A_s^{1/2}u^\dagger$ and solving for $u_j$ gives \eqref{eq:mode-source}.
\end{proof}

The exponential factor in \eqref{eq:mode-source} dominates every
polynomial factor as $\lambda_j\to\infty$, so the source condition
is far more restrictive than any fixed-order fractional Sobolev
condition on $u^\dagger$: it forces $u^\dagger$ to belong to the
space $C^\infty(A_s):=\bigcap_{q\ge0}D(A_s^q)$ of $A_s$-smooth
vectors, with quantitatively controlled, factorially growing
$A_s$-Sobolev norms.

\begin{proposition}[Gevrey-type regularity forced by the source condition]
\label{prop:gevrey}
Under the hypotheses of Proposition~\ref{prop:source-mode}, assume in
addition that \(\|w\|_{L^2(\Omega)}\le\rho_0\). Then
\[
u^\dagger\in D(A_s^q)
\qquad\text{for every }q\ge0.
\]
Moreover, for every \(q>\beta+\tfrac12\),
\begin{equation}\label{eq:gevrey-bound}
\|A_s^q u^\dagger\|_{L^2(\Omega)}
\le
\rho_0\max\left\{
e^{-2\beta t},
\left(
\frac{2q-2\beta-1}{4e\beta t}
\right)^{q-\beta-\frac12}
\right\}.
\end{equation}
In particular, \eqref{eq:gevrey-bound} gives a factorial-type upper bound
for \(\|A_s^q u^\dagger\|_{L^2(\Omega)}\) as \(q\to\infty\), characteristic
of analytic, or Gevrey-type, vectors associated with \(A_s\).
\end{proposition}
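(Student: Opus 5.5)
Using Proposition~\ref{prop:source-mode}, the $j$-th coefficient of $A_s^q u^\dagger$ is
\[
\lambda_j^{q}u_j=\lambda_j^{\,q-\beta-\frac12}e^{-2\beta t\lambda_j}w_j .
\]
Parseval's identity then gives
\[
\|A_s^q u^\dagger\|_{L^2(\Omega)}^2=\sum_j f(\lambda_j)^2 w_j^2\le \Bigl(\sup_{\lambda\ge1}f(\lambda)\Bigr)^2\|w\|_{L^2(\Omega)}^2,
\qquad f(\lambda):=\lambda^{\,q-\beta-\frac12}e^{-2\beta t\lambda},
\]
where the supremum is over $\lambda\ge 1$ because $\lambda_j\ge1$ by Lemma~\ref{lem:spectral-decomp}. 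For membership in $D(A_s^q)$ with arbitrary $q\ge0$, it suffices that $f$ is bounded on $[1,\infty)$. This holds for every real exponent $q-\beta-\tfrac12$, since the exponential dominates any power. The sum is then finite, and the spectral characterization of $D(A_s^q)$ as $\{u:\sum_j\lambda_j^{2q}|u_j|^2<\infty\}$ gives $u^\dagger\in D(A_s^q)$. For $q\le\beta+\tfrac12$ one even has $f\le e^{-2\beta t}$.

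For the quantitative bound with $q>\beta+\tfrac12$, set $a:=q-\beta-\tfrac12>0$ and $b:=2\beta t>0$. One-variable calculus gives
\[
f'(\lambda)=\lambda^{a-1}e^{-b\lambda}(a-b\lambda),
\]
so $f$ is unimodal on $(0,\infty)$ with its maximum at $\lambda^*=a/b$, where $f(\lambda^*)=(a/(eb))^{a}$. Two cases follow. If $\lambda^*\ge1$, the supremum over $[1,\infty)$ equals $(a/(eb))^a$. Substituting $a/(eb)=\frac{2q-2\beta-1}{2}\cdot\frac{1}{2e\beta t}=\frac{2q-2\beta-1}{4e\beta t}$ reproduces the second entry of the maximum in \eqref{eq:gevrey-bound}. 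If $\lambda^*<1$, then $f$ is decreasing on $[1,\infty)$ and the supremum is $f(1)=e^{-2\beta t}$, the first entry. In either case the supremum is bounded by the maximum of the two quantities. Multiplying by $\|w\|\le\rho_0$ and taking square roots gives \eqref{eq:gevrey-bound}.

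The closing remark on factorial growth follows from Stirling's formula. The bound $(a/(eb))^a$ with $a\approx q$ is comparable to $\Gamma(a+1)/(b^a\sqrt{a})$, which grows like $C^q q!$, the Gevrey/analytic-vector signature. I would state this only as an asymptotic comment, not prove it sharply.

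The only delicate point is the case split in the supremum: the constraint $\lambda\ge1$ is what produces the $e^{-2\beta t}$ alternative. Beyond that I expect no real obstacle, since everything reduces to the elementary maximization of $\lambda^a e^{-b\lambda}$.
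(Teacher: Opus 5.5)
Your proposal is correct and follows the same route as the paper. You expand $A_s^q u^\dagger$ mode by mode via Proposition~\ref{prop:source-mode}, bound the sum by $\sup_{\lambda\ge1}$ of a one-variable function times $\|w\|^2$, and split cases according to whether the critical point lies above or below $1$. The differences are cosmetic: you maximize $\lambda^{a}e^{-b\lambda}$ with $a=q-\beta-\tfrac12$ and $b=2\beta t$, while the paper maximizes its square $\lambda^{r}e^{-4\beta t\lambda}$ with $r=2a$ before taking square roots, and you invoke the spectral characterization of $D(A_s^q)$ a bit more explicitly.
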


\begin{proof}
By \eqref{eq:mode-source},
\[
\|A_s^q u^\dagger\|_{L^2(\Omega)}^2
=
\sum_{j=1}^\infty \lambda_j^{2q}u_j^2
=
\sum_{j=1}^\infty
\lambda_j^r e^{-4\beta t\lambda_j}w_j^2,
\qquad r:=2q-2\beta-1.
\]
Since \(\lambda_j\ge1\),
\[
\|A_s^q u^\dagger\|_{L^2(\Omega)}^2
\le
\left(\sup_{\lambda\ge1}\lambda^r e^{-4\beta t\lambda}\right)
\|w\|_{L^2(\Omega)}^2.
\]
For every \(r\in\mathbb R\), the function
\(f(\lambda)=\lambda^r e^{-4\beta t\lambda}\) is continuous on
\([1,\infty)\) and tends to zero as \(\lambda\to\infty\). Thus the
supremum is finite, proving \(u^\dagger\in D(A_s^q)\) for all \(q\ge0\).

Now let \(q>\beta+\tfrac12\), so that \(r>0\). Since
\[
f'(\lambda)=\lambda^{r-1}e^{-4\beta t\lambda}
\bigl(r-4\beta t\lambda\bigr),
\]
the unique critical point is \(\lambda^*=r/(4\beta t)\). If
\(\lambda^*\ge1\), the maximum on \([1,\infty)\) is attained at
\(\lambda^*\); if \(\lambda^*<1\), then \(f\) is decreasing on
\([1,\infty)\) and its maximum is attained at \(1\). Consequently,
\[
\sup_{\lambda\ge1}\lambda^r e^{-4\beta t\lambda}
\le
\max\left\{
e^{-4\beta t},
\left(\frac{r}{4e\beta t}\right)^r
\right\}.
\]
Using \(\|w\|_{L^2(\Omega)}\le\rho_0\), taking square roots, and
substituting \(r=2q-2\beta-1\) gives \eqref{eq:gevrey-bound}.

\end{proof}
Thus \(u^\dagger\) is forced to be smooth on the \(A_s\)-scale to
every finite order, with norms growing at the factorial rate
\eqref{eq:gevrey-bound} regardless of \(\beta\) -- the sense in which
the backward fractional heat problem is \emph{severely}, rather than
mildly, ill posed.

\subsubsection{Singular value asymptotics}
\label{sec:spectral-sv}

The mode-wise picture of Section~\ref{sec:spectral-source} becomes
quantitative once the growth of \(\lambda_j\) in \(j\) is known. This
is furnished by the classical Weyl-type eigenvalue asymptotics for
the Dirichlet fractional Laplacian, due to Blumenthal and Getoor.

\begin{proposition}[Weyl asymptotics for $A_s$; \cite{BlumenthalGetoor1959}, see also {\cite[Thm.~3.1]{FrankReview2016}}]
\label{prop:weyl}
Let \(\Omega\subset\mathbb R^N\) be open and bounded, and let
\(\{\mu_j\}_{j\ge1}\) be the eigenvalues corresponding to the restricted Dirichlet fractional
Laplacian (that is, the integral fractional Laplacian with homogeneous
exterior Dirichlet condition)
\((-\Delta)^s\) on \(\Omega\), arranged
nondecreasingly. Then
\begin{equation}
\label{eq:weyl}
\mu_j
\sim
\kappa_{N,s}\,|\Omega|^{-2s/N}\,j^{2s/N}
\qquad(j\to\infty),
\qquad
\kappa_{N,s}:=(2\pi)^{2s}\omega_N^{-2s/N},
\end{equation}
where \(\omega_N=|\{\xi\in\mathbb R^N:|\xi|<1\}|\). Equivalently, the
eigenvalue counting function \(N(\Lambda):=\#\{j:\mu_j\le\Lambda\}\)
satisfies
\begin{equation}
\label{eq:weyl-counting}
N(\Lambda)
\sim
(2\pi)^{-N}\omega_N\,|\Omega|\,\Lambda^{N/(2s)}
\qquad(\Lambda\to\infty).
\end{equation}
Consequently \(\lambda_j=1+\mu_j\) satisfies
\(\lambda_j\sim\kappa_{N,s}|\Omega|^{-2s/N}j^{2s/N}\) as well, and the
same counting-function asymptotics \eqref{eq:weyl-counting} hold with
\(\mu_j\) replaced by \(\lambda_j\).
\end{proposition}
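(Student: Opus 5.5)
The statement is the Blumenthal--Getoor Weyl law, and I would not reprove it from scratch. Instead, I would reduce to the cited result for $(-\Delta)^s$ and then transfer it to $\lambda_j=1+\mu_j$.

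First I need to confirm that the cited theorem applies to the operator used here. The restricted Dirichlet fractional Laplacian of the paper is the operator associated with the form $[u]_{H^s(\mathbb R^N)}^2$ on $H_0^s(\Omega)$, with the constant $C_{N,s}$ normalized so that its symbol is $|\xi|^{2s}$. By the first representation theorem, as in Section~\ref{sec:Hilbert-scale-form}, this is the generator of the symmetric $2s$-stable process killed on exiting $\Omega$. That is exactly the operator treated in \cite{BlumenthalGetoor1959} and \cite[Thm.~3.1]{FrankReview2016}. Its discreteness follows from Lemma~\ref{lem:spectral-decomp}, since $(-\Delta)^s=A_s-I$. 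The cited result gives the counting asymptotics \eqref{eq:weyl-counting}. Heuristically, these come from the phase-space volume $(2\pi)^{-N}|\Omega|\,|\{|\xi|^{2s}\le\Lambda\}|=(2\pi)^{-N}\omega_N|\Omega|\Lambda^{N/(2s)}$.

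Next I would show that \eqref{eq:weyl-counting} and \eqref{eq:weyl} are equivalent. Write $c=(2\pi)^{-N}\omega_N|\Omega|$. Since $N(\mu_j)\ge j$ and $N(\mu_j-0)\le j-1$, the relation $N(\Lambda)\sim c\Lambda^{N/(2s)}$ yields $j\sim c\mu_j^{N/(2s)}$. For this step I use that $\Lambda\mapsto c\Lambda^{N/(2s)}$ is regularly varying, so left and right limits have the same asymptotics. Inverting gives $\mu_j\sim (j/c)^{2s/N}=(2\pi)^{2s}\omega_N^{-2s/N}|\Omega|^{-2s/N}j^{2s/N}$, which is \eqref{eq:weyl} with the stated $\kappa_{N,s}$. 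The converse direction is the same inversion run backwards.

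Finally, I transfer the result to $A_s$. Since $\mu_j\to\infty$, we have $\lambda_j/\mu_j=1+1/\mu_j\to1$, so $\lambda_j\sim\mu_j$ and the asymptotics for $\lambda_j$ follow. For the counting function, $\#\{j:\lambda_j\le\Lambda\}=N(\Lambda-1)$, and $(\Lambda-1)^{N/(2s)}/\Lambda^{N/(2s)}\to1$, which gives \eqref{eq:weyl-counting} for $\lambda_j$.

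The main obstacle is not the algebra but verifying that the normalization of $(-\Delta)^s$ via $C_{N,s}/2$ in this paper matches the symbol $|\xi|^{2s}$ used in the cited Weyl law. Any mismatch would rescale $\kappa_{N,s}$. I would check this through the Plancherel identity $\frac{C_{N,s}}{2}\iint\frac{|u(x)-u(y)|^2}{|x-y|^{N+2s}}=\int|\xi|^{2s}|\hat u|^2$, as in \cite{DiNezza12}.
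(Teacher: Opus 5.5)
Your proposal is correct and takes the same route as the paper: the paper cites Blumenthal--Getoor (via Frank's Thm.~3.1) for the restricted Dirichlet fractional Laplacian, then notes that the shift $\lambda_j=1+\mu_j$ is asymptotically negligible. You also spell out two points the paper leaves implicit: the inversion between the counting-function form and the eigenvalue form, and the check that the $C_{N,s}/2$ normalization corresponds to the symbol $|\xi|^{2s}$, which pins down the constant $\kappa_{N,s}$.
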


\begin{proof}
This is the classical result of Blumenthal and Getoor
\cite{BlumenthalGetoor1959} for the Dirichlet heat kernel of a
rotationally symmetric \(2s\)-stable process killed upon exiting
\(\Omega\), whose generator is exactly the restricted fractional
Laplacian \((-\Delta)^s\) on \(H_0^s(\Omega)\) used in this paper;
see \cite[Thm.~3.1]{FrankReview2016} for a modern statement in this
normalization. Since \(\lambda_j=1+\mu_j\) and \(\mu_j\to\infty\), the
additive shift by \(1\) is asymptotically negligible, giving the
stated asymptotics for \(\lambda_j\) and for the corresponding
counting function.
\end{proof}

\begin{corollary}[Singular value decay of $B$]
\label{cor:sv-decay}
The singular values \(\sigma_j=\lambda_j^{-1/2}e^{-t\lambda_j}\) of
\(B=e^{-tA_s}A_s^{-1/2}\) satisfy
\[
\log\frac1{\sigma_j}
=
t\lambda_j+\tfrac12\log\lambda_j
\sim
t\,\kappa_{N,s}\,|\Omega|^{-2s/N}\,j^{2s/N}
\qquad(j\to\infty).
\]
In particular \(\sigma_j\to0\) faster than any negative power of
\(j\), so \(B\) is compact and belongs to every Schatten class
\(\mathcal S_p(L^2(\Omega))\), \(p>0\); the inverse problem is
\emph{exponentially}, rather than polynomially, ill posed.
\end{corollary}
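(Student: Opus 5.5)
The statement has three parts: the asymptotic for $\log(1/\sigma_j)$, super-polynomial decay with Schatten-class membership, and compactness. I would prove them in that order, relying only on the explicit formula $\sigma_j=\lambda_j^{-1/2}e^{-t\lambda_j}$ and the Weyl law of Proposition~\ref{prop:weyl}.

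\emph{Asymptotic for $\log(1/\sigma_j)$.} Since $B=e^{-tA_s}A_s^{-1/2}$ is a bounded Borel function of $A_s$, it is self-adjoint and nonnegative, and $Be_j=\sigma_je_j$ with the orthonormal basis from Lemma~\ref{lem:spectral-decomp}. Hence the numbers $\sigma_j$ are exactly its singular values, up to rearrangement. Taking logarithms gives the identity
\[
\log\frac1{\sigma_j}=t\lambda_j+\tfrac12\log\lambda_j .
\]
Because $\lambda_j\to\infty$, we have $\log\lambda_j=o(\lambda_j)$, so $\log(1/\sigma_j)\sim t\lambda_j$. Inserting $\lambda_j\sim\kappa_{N,s}|\Omega|^{-2s/N}j^{2s/N}$ from Proposition~\ref{prop:weyl} yields the stated equivalence.

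\emph{Super-polynomial decay.} Fix $k>0$. The asymptotic gives $\log(1/\sigma_j)\ge c\,j^{2s/N}$ for some $c>0$ and all large $j$. Since $c\,j^{2s/N}/\log j\to\infty$, this implies $\sigma_j\le j^{-k}$ eventually.

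\emph{Compactness and Schatten classes.} For any $p>0$, choose $k>1/p$. Then $\sum_j\sigma_j^p<\infty$ by comparison with $\sum_j j^{-kp}$, so $B\in\mathcal S_p$. Compactness follows independently, because $B$ is diagonal in an orthonormal basis with $\sigma_j\to0$, so it is a norm limit of finite-rank truncations. Exponential ill-posedness is then just the restatement that $\sigma_j=\exp(-\Theta(j^{2s/N}))$.

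\emph{Main point of care.} The one subtle step is the ordering. The statement indexes $\sigma_j$ by the nondecreasing order of $\lambda_j$, whereas singular values are usually listed in nonincreasing order. The map $\lambda\mapsto\lambda^{-1/2}e^{-t\lambda}$ is strictly decreasing on $[1,\infty)$, so the two orderings coincide and multiplicities are preserved. Once this is noted, everything else is elementary.
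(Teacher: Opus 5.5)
Your proposal is correct and follows the same route as the paper. You take logarithms of $\sigma_j=\lambda_j^{-1/2}e^{-t\lambda_j}$, absorb $\tfrac12\log\lambda_j$ into $t\lambda_j$, insert the Weyl asymptotics of Proposition~\ref{prop:weyl}, and deduce summability of $\sigma_j^p$; you also spell out what the paper calls ``immediate'': the comparison $\sigma_j\le j^{-k}$ with $kp>1$, compactness via finite-rank truncation, and the monotonicity of $\lambda\mapsto\lambda^{-1/2}e^{-t\lambda}$, which makes the indexing agree with the usual nonincreasing ordering of singular values.
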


\begin{proof}
Immediate from Proposition~\ref{prop:weyl} and the definition of
\(\sigma_j\), since \(t\lambda_j\) dominates
\(\tfrac12\log\lambda_j\) as \(j\to\infty\). Because
\(\log(1/\sigma_j)\) grows like \(j^{2s/N}\), for every \(p>0\) we
have \(\sum_j\sigma_j^p<\infty\), which is the definition of
membership in \(\mathcal S_p(L^2(\Omega))\).
\end{proof}

\subsubsection{Dependence of ill-posedness on $t$ and $s$}
\label{sec:spectral-severity}

Corollary~\ref{cor:sv-decay} makes the dependence of the severity of
ill-posedness on the observation time \(t\) and the fractional order
\(s\) transparent.

\begin{remark}[Dependence on $t$]
For fixed \(j\), \(\sigma_j(t)=\lambda_j^{-1/2}e^{-t\lambda_j}\) is
strictly decreasing in \(t\), with \(\sigma_j(t)\to\lambda_j^{-1/2}\)
as \(t\to0^+\); in this limit \(K\to I\) and \(B\to A_s^{-1/2}\), so
the problem degenerates into the \emph{mildly} (polynomially)
ill-posed inversion of \(A_s^{1/2}\), with \(\sigma_j\sim j^{-s/N}\)
by Proposition~\ref{prop:weyl}. The severe ill-posedness of the
backward heat problem is thus entirely a consequence of the positive
observation time \(t>0\), with \(\log(1/\sigma_j)\) growing linearly
in \(t\).
\end{remark}

\begin{remark}[Dependence on $s$]
By \eqref{eq:weyl-counting}, the eigenvalue-counting exponent
$N/2s$ is \emph{decreasing} in $s$: larger $s$ concentrates
the useful low-frequency information in fewer modes, so increasing
$s$ makes the backward problem more severely ill posed. This is
consistent with $(-\Delta)^s$ becoming more strongly (locally)
smoothing as $s\to1^-$, where the classical, most severely
ill-posed backward heat problem is recovered, matching the
Bourgain--Brezis--Mironescu limit of Section~\ref{sec:Tikhonov-model}.
Thus $s$ acts as a continuous dial on the severity of
ill-posedness, interpolating between the mildly ill-posed limit
$t\to0^+$ above and the maximally severe classical limit $s\to1^-$.
\end{remark}

\subsubsection{Reconstruction error interpreted through the spectrum}
\label{sec:spectral-error}

We now revisit the mean-square estimate of Theorem~\ref{thm-heat} at
the level of individual Fourier--$A_s$ modes, which makes precise how
many frequencies Tikhonov regularization actually recovers at a given
noise level.

\begin{proposition}[Mode-wise filtering]
\label{prop:mode-filter}
The minimizer \(z_\alpha^\delta=A_s^{1/2}u_\alpha^\delta\) of
Theorem~\ref{thm-heat} satisfies, coefficientwise,
\begin{equation}
\label{eq:filter}
(z_\alpha^\delta)_j
=\frac{\sigma_j^2}{\sigma_j^2+\alpha}\,z_j^\dagger+\frac{\sigma_j}{\sigma_j^2+\alpha}\,(\eta^\delta)_j,
\,
j=1,2,\dots,
\end{equation}
where \(z_j^\dagger=\lambda_j^{1/2}u_j\) and
\((\eta^\delta)_j=\langle\eta^\delta,e_j\rangle\). Consequently
\begin{equation}
\label{eq:mode-mse}
\mathbb E\bigl|(z_\alpha^\delta)_j-z_j^\dagger\bigr|^2
\le
2\left(\frac{\alpha}{\sigma_j^2+\alpha}\right)^2|z_j^\dagger|^2
+
2\left(\frac{\sigma_j}{\sigma_j^2+\alpha}\right)^2
\mathbb E\bigl|(\eta^\delta)_j\bigr|^2.
\end{equation}
\end{proposition}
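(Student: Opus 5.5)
The plan is to work entirely in the eigenbasis $\{e_j\}$ of Lemma~\ref{lem:spectral-decomp}, in which both $A_s$ and $B=e^{-tA_s}A_s^{-1/2}$ are diagonal. By the proof of Theorem~\ref{thm-heat}, $B$ is self-adjoint with $Be_j=B^*e_j=\sigma_je_j$ and $T:=B^*B$ satisfies $Te_j=\sigma_j^2e_j$. The starting point is the normal equation $(T+\alpha I)z_\alpha^\delta=B^*y^\delta$ from the proof of Theorem~\ref{thm:estimate_explicit}, which is equivalent via Proposition~\ref{prop:isometry} to the Euler--Lagrange equation of Theorem~\ref{thm:euler}.

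First, I would insert the data model $y^\delta=Ku^\dagger+\eta^\delta=Bz^\dagger+\eta^\delta$ and take the inner product of the normal equation with $e_j$. Self-adjointness of $T$ and $B$ gives
\[
\langle (T+\alpha I)z_\alpha^\delta,e_j\rangle=(\sigma_j^2+\alpha)(z_\alpha^\delta)_j,
\qquad
\langle B^*y^\delta,e_j\rangle=\sigma_j^2z_j^\dagger+\sigma_j(\eta^\delta)_j .
\]
Dividing by $\sigma_j^2+\alpha>0$ yields \eqref{eq:filter}. The identification $z_j^\dagger=\lambda_j^{1/2}u_j$ follows from $\langle A_s^{1/2}u^\dagger,e_j\rangle=\langle u^\dagger,A_s^{1/2}e_j\rangle=\lambda_j^{1/2}u_j$, since $u^\dagger\in D(A_s^{1/2})$.

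Second, I would subtract $z_j^\dagger$ and use
\[
\frac{\sigma_j^2}{\sigma_j^2+\alpha}-1=-\frac{\alpha}{\sigma_j^2+\alpha}.
\]
This gives
\[
(z_\alpha^\delta)_j-z_j^\dagger=-\frac{\alpha}{\sigma_j^2+\alpha}z_j^\dagger+\frac{\sigma_j}{\sigma_j^2+\alpha}(\eta^\delta)_j .
\]
Applying $|a+b|^2\le2|a|^2+2|b|^2$ and then taking expectations gives \eqref{eq:mode-mse}, because the first term is deterministic. A sharper identity is also available using $\mathbb E[\eta^\delta]=0$ to kill the cross term, but the stated bound does not need it.

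There is no real obstacle here. The only point needing care is justifying the coefficientwise manipulation: pairing with $e_j$ commutes with the bounded operators $T$ and $B^*$ precisely because $e_j$ is an eigenvector of the self-adjoint operators $T$ and $B$. Measurability of $(\eta^\delta)_j$ and finiteness of $\mathbb E|(\eta^\delta)_j|^2\le\mathbb E\|\eta^\delta\|^2\le\delta^2$ follow from Bessel's inequality.
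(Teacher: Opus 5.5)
Your proposal is correct and follows essentially the same route as the paper. The paper likewise decouples the normal equation $(T+\alpha I)z_\alpha^\delta=Tz^\dagger+B^*\eta^\delta$ in the eigenbasis using $\langle B^*\eta^\delta,e_j\rangle=\sigma_j(\eta^\delta)_j$. It then subtracts $z_j^\dagger$ and applies $|a+b|^2\le 2|a|^2+2|b|^2$ followed by expectation.
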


\begin{proof}
Since \(T=B^*B\) is diagonal in \(\{e_j\}\) with eigenvalues
\(\sigma_j^2\), the normal equation
\((T+\alpha I)z_\alpha^\delta=Tz^\dagger+B^*\eta^\delta\) from the
proof of Theorem~\ref{thm-heat} decouples mode by mode into
\((\sigma_j^2+\alpha)(z_\alpha^\delta)_j=\sigma_j^2z_j^\dagger+\sigma_j(\eta^\delta)_j\),
using \(\langle B^*\eta^\delta,e_j\rangle=\langle\eta^\delta,Be_j\rangle=\sigma_j(\eta^\delta)_j\).
Solving for \((z_\alpha^\delta)_j\) gives \eqref{eq:filter}; writing
\((z_\alpha^\delta)_j-z_j^\dagger\) as the sum of the two terms in
\eqref{eq:filter} minus \(z_j^\dagger\), namely
\(-\frac{\alpha}{\sigma_j^2+\alpha}z_j^\dagger+\frac{\sigma_j}{\sigma_j^2+\alpha}(\eta^\delta)_j\),
and applying \(|a+b|^2\le2|a|^2+2|b|^2\) followed by expectation
gives \eqref{eq:mode-mse}.
\end{proof}

The filter factor \(\alpha/(\sigma_j^2+\alpha)\) in
\eqref{eq:mode-mse} is close to \(0\) for well-resolved modes
(\(\sigma_j^2\gg\alpha\)) and close to \(1\) for suppressed modes
(\(\sigma_j^2\ll\alpha\)); since \(\sigma_j^2=\lambda_j^{-1}e^{-2t\lambda_j}\)
decays exponentially by Corollary~\ref{cor:sv-decay}, this transition
is sharp, and only finitely many low modes are ever trusted for a
given \(\alpha\). The transition threshold can be located exactly.

\begin{proposition}[Effective bandwidth]
\label{prop:bandwidth}
For \(0<\alpha\le e^{-2t}\), the equation \(\sigma_j^2=\alpha\), i.e.\
\(\lambda^{-1}e^{-2t\lambda}=\alpha\), has a unique solution
\(\lambda^*(\alpha)\ge1\) among \(\lambda\ge1\), given explicitly by
\begin{equation}
\label{eq:lambertw}
\lambda^*(\alpha)=\frac1{2t}\,W\!\left(\frac{2t}{\alpha}\right),
\end{equation}
where \(W\) denotes the principal branch of the Lambert $W$ function.
As \(\alpha\to0^+\),
\[
\lambda^*(\alpha)\sim\frac1{2t}\log\frac1\alpha.
\]
Consequently, the number of modes with \(\lambda_j\le\lambda^*(\alpha)\)
-- the ``effective bandwidth'' of the regularized reconstruction --
satisfies, by Proposition~\ref{prop:weyl},
\begin{equation}
\label{eq:effective-modes}
j^*(\alpha):=N(\lambda^*(\alpha))
\sim
(2\pi)^{-N}\omega_N|\Omega|
\left(\frac1{2t}\log\frac1\alpha\right)^{N/(2s)}
\qquad(\alpha\to0^+).
\end{equation}
In particular, under the optimal a priori choice
\(\alpha=\delta^{2/(2\beta+1)}\) of Theorem~\ref{thm-heat}, the
effective bandwidth grows only \emph{poly-logarithmically} in the
inverse noise level:
\begin{equation}
\label{eq:effective-modes-delta}
j^*(\delta)
\sim
(2\pi)^{-N}\omega_N|\Omega|
\left(\frac1{t(2\beta+1)}\log\frac1\delta\right)^{N/(2s)}
\qquad(\delta\to0^+).
\end{equation}
\end{proposition}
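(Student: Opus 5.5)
The proof is a one-variable calculus argument on $g(\lambda):=\lambda^{-1}e^{-2t\lambda}$ for $\lambda\ge1$, followed by substitution into the Weyl asymptotics of Proposition~\ref{prop:weyl}.

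\textbf{Existence, uniqueness and the Lambert formula.} The function $g$ is continuous and strictly decreasing on $[1,\infty)$, since $g'(\lambda)=-\lambda^{-2}e^{-2t\lambda}(1+2t\lambda)<0$. It satisfies $g(1)=e^{-2t}$ and $g(\lambda)\to0$ as $\lambda\to\infty$. By the intermediate value theorem, for every $0<\alpha\le e^{-2t}$ the equation $g(\lambda)=\alpha$ has exactly one solution $\lambda^*(\alpha)\ge1$.

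To identify it, rewrite $\lambda^{-1}e^{-2t\lambda}=\alpha$ as $2t\lambda\,e^{2t\lambda}=2t/\alpha$. Thus $x:=2t\lambda$ solves $xe^x=2t/\alpha$ with $x\ge 2t>0$. On $[0,\infty)$ the map $x\mapsto xe^x$ is a bijection whose inverse is the principal branch $W$, so $x=W(2t/\alpha)$, which is \eqref{eq:lambertw}.

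\textbf{Asymptotics of $\lambda^*(\alpha)$.} As $y\to\infty$ one has $W(y)=\log y-\log\log y+o(1)$, hence $W(y)\sim\log y$. It follows that
\[
\lambda^*(\alpha)\sim\frac1{2t}\log\frac{2t}{\alpha}\sim\frac1{2t}\log\frac1\alpha .
\]
To stay self-contained I would instead argue directly from $2t\lambda^*=\log(1/\alpha)-\log\lambda^*$. Since $\lambda^*\ge1$, this gives the upper bound $2t\lambda^*\le\log(1/\alpha)$. Then $\log\lambda^*=O(\log\log(1/\alpha))=o(\log(1/\alpha))$, which yields the matching lower bound.

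\textbf{Effective bandwidth.} Since $\lambda^*(\alpha)\to\infty$ as $\alpha\to0^+$, the counting asymptotics \eqref{eq:weyl-counting} for $\lambda_j$ apply along $\Lambda=\lambda^*(\alpha)$. The main point to check is that $N(\Lambda)\sim c\Lambda^{N/(2s)}$ composes with $\Lambda\sim\Lambda'$ to give $N(\Lambda)\sim c\Lambda'^{N/(2s)}$. This holds because $\Lambda^{N/(2s)}$ is a power, so $(\Lambda/\Lambda')^{N/(2s)}\to1$; no regularity of $N$ beyond the asymptotic \eqref{eq:weyl-counting} is needed. This gives \eqref{eq:effective-modes}.

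For \eqref{eq:effective-modes-delta}, take $\alpha=\delta^{2/(2\beta+1)}$. This is admissible for small $\delta$, since then $\alpha\le e^{-2t}$. Substituting $\log(1/\alpha)=\frac{2}{2\beta+1}\log(1/\delta)$ into \eqref{eq:effective-modes} gives
\[
\frac1{2t}\log\frac1\alpha=\frac1{t(2\beta+1)}\log\frac1\delta,
\]
which is \eqref{eq:effective-modes-delta}.

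I expect no real obstacle. The only care needed is justifying the asymptotic substitutions, that is, the $W$-asymptotics and the composition of $\sim$ with the power law, and both are elementary.
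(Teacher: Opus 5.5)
Your proof is correct and follows the paper's argument step for step: strict monotonicity of $\lambda^{-1}e^{-2t\lambda}$ gives existence and uniqueness, the substitution $x=2t\lambda$ gives $xe^x=2t/\alpha$ and hence the principal Lambert branch, $W(z)\sim\log z$ gives the asymptotics, and the result is then substituted into the Weyl counting law together with $\log(1/\alpha)=\frac{2}{2\beta+1}\log(1/\delta)$. Your additions are small rigor improvements that the paper leaves implicit: the elementary derivation of $\lambda^*(\alpha)\sim\frac1{2t}\log\frac1\alpha$ from $2t\lambda^*=\log(1/\alpha)-\log\lambda^*$, the check that $\sim$ composes with the power law in \eqref{eq:weyl-counting}, and the observation that $\alpha=\delta^{2/(2\beta+1)}\le e^{-2t}$ once $\delta$ is small.
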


\begin{proof}
The function \(f(\lambda)=\lambda^{-1}e^{-2t\lambda}\) is strictly
decreasing on \((0,\infty)\), since
\(f'(\lambda)=-e^{-2t\lambda}(\lambda^{-2}+2t\lambda^{-1})<0\); as
\(f(1)=e^{-2t}\ge\alpha\) and \(f(\lambda)\to0\) as
\(\lambda\to\infty\), there is a unique \(\lambda^*(\alpha)\ge1\)
with \(f(\lambda^*(\alpha))=\alpha\). Writing \(y=2t\lambda\), the
equation \(f(\lambda)=\alpha\) becomes \(y+\log y=\log(2t/\alpha)\),
i.e.\ \(ye^y=2t/\alpha\), whose unique positive solution is
\(y=W(2t/\alpha)\) by definition of the Lambert \(W\) function; this
gives \eqref{eq:lambertw}. The asymptotics
\(W(z)\sim\log z-\log\log z\sim\log z\) as \(z\to\infty\) give
\(\lambda^*(\alpha)\sim\frac1{2t}\log(2t/\alpha)\sim\frac1{2t}\log(1/\alpha)\)
as \(\alpha\to0^+\). Substituting into \eqref{eq:weyl-counting} gives
\eqref{eq:effective-modes}, and substituting
\(\log(1/\alpha)=\frac2{2\beta+1}\log(1/\delta)\) (from
\(\alpha=\delta^{2/(2\beta+1)}\)) gives \eqref{eq:effective-modes-delta}.
\end{proof}

This exhibits the same tension already visible in
Proposition~\ref{prop:gevrey}: the number of individually
trustworthy frequencies grows only poly-logarithmically in
\(1/\delta\), yet the \emph{aggregate} reconstruction error still
decays at the polynomial rate \(\delta^{4\beta/(2\beta+1)}\) of
Theorem~\ref{thm-heat}. The two facts are compatible because the
source condition forces the true coefficients \(z_j^\dagger\) to
decay exponentially in \(\lambda_j\): the bias contributed by every
mode beyond the effective bandwidth \(j^*(\alpha)\) is already
negligible before regularization is applied, so the truncation
implicit in \eqref{eq:filter}--\eqref{eq:mode-mse} discards almost no
signal, only noise.

\subsubsection{Why the fractional penalty is naturally matched to the forward model}
\label{sec:spectral-matching}

\begin{remark}[Matched versus mismatched regularization]
The explicit results above rely on \(K=e^{-tA_s}\), the
regularization operator \(A_s\), and \(T=B^*B=A_s^{-1}e^{-2tA_s}\)
all being functions of the same operator \(A_s\), hence simultaneously
diagonalizable in the eigenbasis \(\{e_j\}\) of
Lemma~\ref{lem:spectral-decomp}: this is what lets the source
condition be written mode-wise as in \eqref{eq:mode-source} and the
resolution threshold computed explicitly as in \eqref{eq:lambertw}.

If the penalty were instead generated by an operator not spectrally
matched to the forward map -- for instance the classical Dirichlet
energy against a forward model governed by \((-\Delta)^s\), or more
generally any observation operator not commuting with \(A_s\) -- the
source condition could no longer be reduced to \eqref{eq:mode-source},
and the general theory of exponentially ill-posed problems predicts
only \emph{logarithmic} convergence rates; see Mair \cite{Mair1994}
and Hohage \cite{Hohage2000}. Matching the regularization operator to
the forward operator, automatic here since both are functions of
\(A_s\), is thus what upgrades the generic logarithmic rate to the
polynomial rate \(\delta^{4\beta/(2\beta+1)}\) of Theorem~\ref{thm-heat}.
\end{remark}

Together, the two applications illustrate complementary strengths of
the framework: partial observation shows its applicability to a
generic bounded observation operator, while the backward fractional
heat equation shows how a matched spectral structure yields fully
explicit source conditions, constants, and convergence rates. We now
turn to the asymptotic behavior of the framework as the fractional
order approaches the classical limit \(s\to1^-\).

\section{The Classical Limit as $s\to1^{-}$}\label{sec:Tikhonov-model}

Throughout this section, we assume $K\in\mathcal L(L^2(\Omega),L^2(\Omega)).$ We choose the normalization constant $C_{N,s}$ so that, for every $u\in H_0^1(\Omega)$,
\begin{gather*}
  \lim_{s\to1^-}
\frac{C_{N,s}}{2}
\iint_{\mathbb R^{2N}}
\frac{|u(x)-u(y)|^2}{|x-y|^{N+2s}}\,dx\,dy
=\int_\Omega |\nabla u|^2\,dx.  
\end{gather*}

In particular, $C_{N,s}\asymp1-s$ as $s\to1^-$. We write
\begin{equation}\label{eq:E}
\mathcal{E}_s(u,v)=\frac{C_{N,s}}{2}
\iint_{\mathbb R^{2N}}
\frac{(u(x)-u(y))(v(x)-v(y))}{|x-y|^{N+2s}}\,dx\,dy.
\end{equation}

\subsection{BBM Limit for the Functional}
The previous sections established the fractional regularization framework and demonstrated its applicability to representative inverse problems. We now investigate its asymptotic behaviour as the fractional order approaches the local limit $s\to1^{-}$. This analysis provides a rigorous justification that the proposed nonlocal model is a genuine extension of the classical Tikhonov regularization. The first theorem establishes the convergence of the fractional regularization functional through the Bourgain--Brezis--Mironescu limit.
\begin{theorem}[Pointwise convergence of the functional]
\label{thm:bbm-functional}
Let $K:H_0^1(\Omega)\to L^2(\Omega)$ be bounded and linear, let
$y^\delta\in L^2(\Omega)$, and let $\alpha>0$. For $s\in(0,1)$,
define
\begin{equation}\label{eq:J-al-s}
\mathcal{J}_{\alpha,s}(u)
=
\frac12\|Ku-y^\delta\|_{L^2(\Omega)}^2
+
\frac{\alpha}{2}
\left(
\|u\|_{L^2(\Omega)}^2+\mathcal E_s(u,u)
\right).
\end{equation}
Then, for every fixed $u\in H_0^1(\Omega)$,
\[
\lim_{s\to1^-}\mathcal{J}_{\alpha,s}(u)
=
\mathcal{J}_{\alpha,1}(u),
\]
where
\begin{equation}\label{eq:J-al-1}
\mathcal{J}_{\alpha,1}(u)
=
\frac12\|Ku-y^\delta\|_{L^2(\Omega)}^2
+
\frac{\alpha}{2}
\left(
\|u\|_{L^2(\Omega)}^2+
\|\nabla u\|_{L^2(\Omega)}^2
\right).
\end{equation}
\end{theorem}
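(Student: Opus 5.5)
The plan is to observe that the data-fidelity term and the $L^2$-part of the penalty do not depend on $s$ at all. Fix $u\in H_0^1(\Omega)$ and write
\[
\mathcal J_{\alpha,s}(u)-\mathcal J_{\alpha,1}(u)
=\frac{\alpha}{2}\Bigl(\mathcal E_s(u,u)-\|\nabla u\|_{L^2(\Omega)}^2\Bigr).
\]
The quantity $\frac12\|Ku-y^\delta\|_{L^2(\Omega)}^2$ is a fixed finite number, since $K$ is bounded on $H_0^1(\Omega)$ and $y^\delta\in L^2(\Omega)$. The term $\frac{\alpha}{2}\|u\|_{L^2(\Omega)}^2$ appears identically in \eqref{eq:J-al-s} and \eqref{eq:J-al-1}. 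The whole statement therefore reduces to the scalar limit
\[
\lim_{s\to1^-}\mathcal E_s(u,u)=\|\nabla u\|_{L^2(\Omega)}^2 .
\]

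The key step is to identify $\mathcal E_s(u,u)$, defined in \eqref{eq:E}, with the normalized Gagliardo seminorm of the zero extension $\tilde u$ of $u$ to $\mathbb R^N$. The paper's convention is that functions in $H_0^s(\Omega)$ are extended by zero, and the double integral in \eqref{eq:E} is over $\mathbb R^{2N}$. Since $\Omega$ is Lipschitz and $u\in H_0^1(\Omega)$, the extension satisfies $\tilde u\in H^1(\mathbb R^N)$, $\nabla\tilde u=\mathbf 1_\Omega\nabla u$, and $\|\nabla\tilde u\|_{L^2(\mathbb R^N)}=\|\nabla u\|_{L^2(\Omega)}$.

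The required limit is then exactly the normalization hypothesis imposed on $C_{N,s}$ at the start of Section~\ref{sec:Tikhonov-model}: for every $u\in H_0^1(\Omega)$ the normalized seminorm converges to $\int_\Omega|\nabla u|^2\,dx$. I would state explicitly that this is the Bourgain--Brezis--Mironescu theorem \cite{BourgainBrezisMironescu2001} applied on the whole space to $\tilde u$, as refined by \cite{Davila02,Ponce2004}. That result gives
\[
\lim_{s\to1^-}(1-s)\iint_{\mathbb R^{2N}}\frac{|\tilde u(x)-\tilde u(y)|^2}{|x-y|^{N+2s}}\,dx\,dy=K_N\|\nabla\tilde u\|_{L^2(\mathbb R^N)}^2 .
\]
The stated choice $C_{N,s}\asymp 1-s$ is precisely the one calibrated so that $\frac{C_{N,s}}{2}$ times the double integral tends to $\|\nabla u\|^2_{L^2}$. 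With that limit in hand, multiply by $\alpha/2$ and add back the $s$-independent terms to conclude.

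The main obstacle is making sure that BBM applies on all of $\mathbb R^N$ and not only on $\Omega\times\Omega$. BBM is classically stated on a bounded smooth or Lipschitz domain $\Omega'$. I would apply it on a large ball $\Omega'\supset\overline\Omega$ to $\tilde u\in H^1(\Omega')$. I would then check that the remaining contribution from pairs with $|x-y|$ large is harmless. Specifically, for $x\in\Omega$ and $y\notin\Omega'$, the kernel is bounded by $\operatorname{dist}(\Omega,\partial\Omega')^{-N-2s}$, so these pairs contribute at most $C\|u\|_{L^2}^2$. After multiplication by $C_{N,s}\to0$ this vanishes in the limit, and the remaining part is the whole-space statement.
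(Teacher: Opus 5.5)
Your proposal is correct and follows the same route as the paper: the fidelity and $L^2$ terms do not depend on $s$, and the remaining limit $\mathcal E_s(u,u)\to\|\nabla u\|_{L^2(\Omega)}^2$ is taken from the Bourgain--Brezis--Mironescu formula, which is in fact the normalization of $C_{N,s}$ assumed at the start of Section~\ref{sec:Tikhonov-model}. Your large-ball reduction adds care that the paper omits. One fix is needed there: boundedness of the kernel alone does not bound the tail, because $y$ ranges over an unbounded set. What you need is that $\int_{\{|x-y|\ge d\}}|x-y|^{-N-2s}\,dy$, a dimensional constant times $d^{-2s}/(2s)$, is finite and uniformly bounded for $s$ near $1$.
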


\begin{proof}
Fix $u\in H_0^1(\Omega)$. The terms
$\|Ku-y^\delta\|_{L^2(\Omega)}^2
\,\text{and}\, \|u\|_{L^2(\Omega)}^2$
do not depend on $s$. Hence the only term that must be analyzed is the
fractional energy $\mathcal E_s(u,u)$. By the
Bourgain--Brezis--Mironescu formula,
\[
\lim_{s\to1^-}
\mathcal E_s(u,u)
=
\int_\Omega |\nabla u|^2\,dx.
\]
Therefore
\[
\begin{aligned}
\lim_{s\to1^-} \mathcal{J}_{\alpha,s}(u)
&=
\frac12\|Ku-y^\delta\|_{L^2(\Omega)}^2+
\frac{\alpha}{2}\|u\|_{L^2(\Omega)}^2+
\frac{\alpha}{2} \int_\Omega |\nabla u|^2\,dx\\
&=
\mathcal{J}_{\alpha,1}(u).
\end{aligned}
\]
This proves the theorem.
\end{proof}

\subsection{BBM Limit for the Euler--Lagrange Equation}
The convergence of the regularization functional established in the previous subsection does not, by itself, imply the convergence of the corresponding optimality conditions. We therefore investigate the asymptotic behaviour of the Euler--Lagrange equation associated with the fractional Tikhonov functional. The following theorem shows that, as $s\to1^{-}$, the fractional optimality system converges to the classical Euler--Lagrange equation, thereby completing the rigorous asymptotic connection between the fractional and classical regularization frameworks. We refer to Bourgain, Brezis and Mironescu \cite{BourgainBrezisMironescu2001}, Ponce \cite{Ponce2004}, and D\'avila~\cite{Davila02} for the Bourgain--Brezis--Mironescu formula and its subsequent developments.
\begin{theorem}[Convergence of the Euler--Lagrange bilinear forms]
\label{thm:bbm-euler}
Let $u,v\in H_0^1(\Omega)$. Then
\[
\lim_{s\to1^-}\mathcal E_s(u,v)
=
\int_\Omega \nabla u\cdot\nabla v\,dx.
\]
Consequently, the weak fractional Euler--Lagrange form
\[
(Ku-y^\delta,Kv)_{L^2(\Omega)}
+
\alpha (u,v)_{L^2(\Omega)}
+
\alpha\mathcal E_s(u,v)
=0
\]
converges, as $s\to1^-$, to the classical weak Euler--Lagrange form
\[
(Ku-y^\delta,Kv)_{L^2(\Omega)}
+
\alpha (u,v)_{L^2(\Omega)}
+
\alpha\int_\Omega \nabla u\cdot\nabla v\,dx
=0.
\]
Equivalently, the fractional operator equation
\[
K^*(Ku-y^\delta)+\alpha u+\alpha(-\Delta)^s u=0
\]
converges formally in weak form to
\[
K^*(Ku-y^\delta)+\alpha u-\alpha\Delta u=0.
\]
\end{theorem}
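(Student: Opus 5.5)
The plan is to obtain the bilinear convergence from the quadratic one by polarization. $\mathcal E_s$ is a symmetric bilinear form on $H_0^1(\Omega)$ (finite there, since $H_0^1(\Omega)\subset H_0^s(\Omega)$ after extension by zero). Hence
\[
\mathcal E_s(u,v)=\tfrac14\bigl(\mathcal E_s(u+v,u+v)-\mathcal E_s(u-v,u-v)\bigr),
\qquad u,v\in H_0^1(\Omega).
\]
Because $u\pm v\in H_0^1(\Omega)$, the normalization of $C_{N,s}$ fixed at the start of Section~\ref{sec:Tikhonov-model} applies; this is the BBM formula already used in the proof of Theorem~\ref{thm:bbm-functional}. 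It gives $\mathcal E_s(u\pm v,u\pm v)\to\int_\Omega|\nabla(u\pm v)|^2\,dx$ as $s\to1^-$. Polarizing the limit then gives
\[
\tfrac14\left(\|\nabla(u+v)\|^2-\|\nabla(u-v)\|^2\right)=\int_\Omega\nabla u\cdot\nabla v\,dx,
\]
which is the first claim.

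The one point needing care is that the exterior contributions are accounted for. For $u\in H_0^1(\Omega)$ extended by zero, $u$ lies in $H^1(\mathbb R^N)$ with $\nabla u=0$ a.e.\ outside $\Omega$. So the whole-space BBM limit $\int_{\mathbb R^N}|\nabla u|^2$ coincides with $\int_\Omega|\nabla u|^2$. This is exactly what the normalization hypothesis asserts, so I will simply invoke it, with \cite{BourgainBrezisMironescu2001,Ponce2004,Davila02} cited for its validity. I do not expect a real obstacle here: everything reduces to the scalar BBM limit on the fixed vectors $u+v$ and $u-v$.

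For the consequence, fix $u,v\in H_0^1(\Omega)$. In the weak Euler--Lagrange expression, the terms $(Ku-y^\delta,Kv)_{L^2}$ and $\alpha(u,v)_{L^2}$ do not depend on $s$ (here $K\in\mathcal L(L^2,L^2)$, so these terms are well-defined on $H_0^1$). The only $s$-dependent term is $\alpha\mathcal E_s(u,v)$, which converges to $\alpha\int_\Omega\nabla u\cdot\nabla v$ by the first part. Hence the left-hand side converges to the classical weak form for every pair $(u,v)$.

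The operator statement is only claimed \emph{formally in weak form}. I will interpret it as follows: by Theorem~\ref{thm:euler}, the operator equation in $H^{-s}$ means $\langle K^*(Ku-y^\delta)+\alpha u+\alpha(-\Delta)^su,v\rangle=(Ku-y^\delta,Kv)+\alpha(u,v)+\alpha\mathcal E_s(u,v)$ for test functions $v$. Likewise $\langle -\Delta u,v\rangle=\int\nabla u\cdot\nabla v$ for $u,v\in H_0^1(\Omega)$. The convergence of the operators tested against each fixed $v\in H_0^1(\Omega)$ is then exactly the convergence of forms just proved. No statement about convergence of solutions is made here; that is deferred to the $\Gamma$-convergence results.
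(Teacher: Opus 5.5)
Your proposal is correct and follows the paper's proof essentially verbatim: polarization of $\mathcal E_s$, the Bourgain--Brezis--Mironescu limit applied to $u\pm v\in H_0^1(\Omega)$, and the observation that the remaining terms of the weak Euler--Lagrange form do not depend on $s$. You read the consequence as convergence of the left-hand side for each fixed pair $(u,v)$, whereas the paper literally passes to the limit in the identity $=0$; your reading is, if anything, the more careful formulation.
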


\begin{proof}
Let $u,v\in H_0^1(\Omega)$. By the polarization identity,
\[
\mathcal E_s(u,v)
=
\frac14
\left[
\mathcal E_s(u+v,u+v)
-
\mathcal E_s(u-v,u-v)
\right].
\]
Since $u+v\in H_0^1(\Omega)$ and $u-v\in H_0^1(\Omega)$, the
Bourgain--Brezis--Mironescu formula gives
\[
\lim_{s\to1^-}\mathcal E_s(u+v,u+v)
=
\int_\Omega |\nabla(u+v)|^2\,dx
\]
and
\[
\lim_{s\to1^-}\mathcal E_s(u-v,u-v)
=
\int_\Omega |\nabla(u-v)|^2\,dx.
\]
Therefore,
\[
\begin{aligned}
\lim_{s\to1^-}\mathcal E_s(u,v)
&=
\frac14
\left[
\int_\Omega |\nabla(u+v)|^2\,dx
-
\int_\Omega |\nabla(u-v)|^2\,dx
\right] \\
&=
\int_\Omega \nabla u\cdot\nabla v\,dx.
\end{aligned}
\]
Now observe that the remaining two terms in the weak Euler--Lagrange
identity,
\[
(Ku-y^\delta,Kv)_{L^2(\Omega)}
\quad\text{and}\quad
(u,v)_{L^2(\Omega)},
\]
do not depend on \(s\). Passing to the limit in
\[
(Ku-y^\delta,Kv)_{L^2(\Omega)}
+
\alpha (u,v)_{L^2(\Omega)}
+
\alpha\mathcal E_s(u,v)
=0
\]
therefore yields
\[
(Ku-y^\delta,Kv)_{L^2(\Omega)}
+
\alpha (u,v)_{L^2(\Omega)}
+
\alpha\int_\Omega \nabla u\cdot\nabla v\,dx
=0.
\]
This is precisely the weak Euler--Lagrange equation corresponding to
the classical Tikhonov functional \(J_{\alpha,1}\).
\end{proof}
\begin{remark}
The convergence result established in this section shows that the proposed
fractional Tikhonov model is a genuine extension of the classical
least-squares regularization framework. Indeed, after the
Bourgain--Brezis--Mironescu normalization, the fractional Dirichlet energy
converges to the classical Dirichlet energy as $s\to1^{-}$.
Consequently, the fractional variational problem continuously recovers the
classical elliptic Tikhonov regularization model.

This limiting behaviour demonstrates that the present theory unifies both
local and nonlocal regularization within a single variational framework,
with the fractional order $s$ acting as a parameter that interpolates
between the two regimes.
\end{remark}
\subsection{Gamma-Convergence and Convergence of Minimizers}
Throughout this subsection, we assume that
\[
K:L^2(\Omega)\longrightarrow L^2(\Omega)
\]
is a bounded linear operator. All $\Gamma$-convergence statements are
understood with respect to the strong topology of $L^2(\Omega)$.
We now strengthen the pointwise Bourgain--Brezis--Mironescu limit by
showing that the full fractional Tikhonov functionals converge, in the
sense of $\Gamma$-convergence, to the classical Tikhonov functional.
This provides convergence not only of the energies, but also of the
corresponding minimizers.

For \(s\in(0,1)\), we define
\[
\mathcal{E}_s(u,u)
=
\frac{C_{N,s}}{2}
\iint_{\mathbb R^{2N}}
\frac{|u(x)-u(y)|^2}{|x-y|^{N+2s}}
\,dx\,dy.
\]
We regard the fractional Tikhonov functional as a functional on
$L^2(\Omega)$ by setting
\[
\mathcal J_{\alpha,s}(u)
=
\begin{cases}
\displaystyle
\frac12\|Ku-y^\delta\|_{L^2(\Omega)}^2
+
\frac{\alpha}{2}
\left(
\|u\|_{L^2(\Omega)}^2+\mathcal{E}_s(u,u)
\right),
& u\in H_0^s(\Omega),\\[2ex]
+\infty,
& u\notin H_0^s(\Omega).
\end{cases}
\]
The limiting classical functional is
\[
\mathcal J_{\alpha,1}(u)
=
\begin{cases}
\displaystyle
\frac12\|Ku-y^\delta\|_{L^2(\Omega)}^2
+
\frac{\alpha}{2}
\left(
\|u\|_{L^2(\Omega)}^2+\|\nabla u\|_{L^2(\Omega)}^2
\right),
& u\in H_0^1(\Omega),\\[2ex]
+\infty,
& u\notin H_0^1(\Omega).
\end{cases}
\]

The first step is to record the coercivity of the fractional functionals.

\begin{lemma}[Coercivity]\label{lem:coercive}
Let $\alpha>0$ and $y^\delta\in L^2(\Omega)$. Then, for every
$s\in(0,1)$ and every $u\in L^2(\Omega)$,
\[
\mathcal J_{\alpha,s}(u)
\ge
\frac{\alpha}{2}\|u\|_{L^2(\Omega)}^2.
\]
Consequently, if $\{\mathcal J_{\alpha,s}(u_s)\}$ is bounded, then
$\{u_s\}$ is bounded in $L^2(\Omega)$.
\end{lemma}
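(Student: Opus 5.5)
The plan is to argue directly from the definition of the extended functional $\mathcal J_{\alpha,s}$, splitting according to whether $u$ belongs to $H_0^s(\Omega)$.

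\emph{Case $u\notin H_0^s(\Omega)$.} Here $\mathcal J_{\alpha,s}(u)=+\infty$, so the inequality $\mathcal J_{\alpha,s}(u)\ge\frac{\alpha}{2}\|u\|_{L^2(\Omega)}^2$ holds trivially, because the right-hand side is finite for $u\in L^2(\Omega)$.

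\emph{Case $u\in H_0^s(\Omega)$.} I would note that each term in the definition of $\mathcal J_{\alpha,s}(u)$ is nonnegative. The fidelity term $\frac12\|Ku-y^\delta\|_{L^2(\Omega)}^2$ is nonnegative, and it is finite because $K$ is bounded on $L^2(\Omega)$. The fractional energy
\[
\mathcal E_s(u,u)=\frac{C_{N,s}}{2}\iint_{\mathbb R^{2N}}\frac{|u(x)-u(y)|^2}{|x-y|^{N+2s}}\,dx\,dy
\]
is nonnegative, since $C_{N,s}>0$ and the integrand is nonnegative. Dropping both terms leaves exactly $\frac{\alpha}{2}\|u\|_{L^2(\Omega)}^2$, which gives the bound. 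The bound is uniform in $s$, since no $s$-dependent constant enters.

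\emph{Consequence.} Suppose $\mathcal J_{\alpha,s}(u_s)\le M$ for all $s$ in the family. Then
\[
\|u_s\|_{L^2(\Omega)}^2\le \frac{2M}{\alpha},
\]
with a bound independent of $s$. This is the boundedness in $L^2(\Omega)$ claimed in the statement.

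I do not expect any real obstacle. The only point needing care is the convention for $u\notin H_0^s(\Omega)$, and the extended-value definition handles it.
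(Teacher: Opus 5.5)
Your proposal is correct and follows the paper's proof essentially line for line. You handle $u\notin H_0^s(\Omega)$ by the extended-value convention, and for $u\in H_0^s(\Omega)$ you drop the nonnegative fidelity and fractional-energy terms, then read off $\|u_s\|_{L^2(\Omega)}^2\le 2M/\alpha$ uniformly in $s$.
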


\begin{proof}
If $u\notin H_0^s(\Omega)$, then
$\mathcal J_{\alpha,s}(u)=+\infty$, and the inequality is trivial.
Let $u\in H_0^s(\Omega)$. 
Both the fidelity term and the fractional energy $\mathcal{E}_s(u,u)$ are
nonnegative. Therefore
$\mathcal J_{\alpha,s}(u) \ge \frac{\alpha}{2}\|u\|_{L^2(\Omega)}^2.$
Hence, if $\mathcal J_{\alpha,s}(u_s)\le C$, then
$\frac{\alpha}{2}\|u_s\|_{L^2(\Omega)}^2\le C,$
and so $\|u_s\|_{L^2(\Omega)}^2\le \frac{2C}{\alpha}.$ Thus $\{u_s\}$ is bounded in $L^2(\Omega)$.
\end{proof}
Coercivity alone gives boundedness of a minimizing sequence; compactness
requires the Bourgain--Brezis--Mironescu theorem.

\begin{lemma}[Compactness of bounded fractional energies]\label{lem:BBMcompact}
Let \(\{s_n\}\subset(0,1)\) satisfy \(s_n\to1^{-}\), and let
\(\{u_n\}\subset L^2(\Omega)\) satisfy
\[
\sup_n\mathcal J_{\alpha,s_n}(u_n)<\infty.
\]
Then \(\{u_n\}\) is relatively compact in \(L^2(\Omega)\). In
particular, there exists a subsequence, still denoted by
\(\{u_n\}\), and a function \(u\in H_0^1(\Omega)\) such that
\[
u_n\to u
\qquad\text{strongly in }L^2(\Omega).
\]
Moreover,
\[
\|\nabla u\|_{L^2(\Omega)}^2
\le
\liminf_{n\to\infty}
\mathcal{E}_{s_n}(u_n,u_n).
\]
\end{lemma}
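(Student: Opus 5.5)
The plan is to reduce the lemma to the compactness theorem of Bourgain--Brezis--Mironescu in the form refined by Ponce \cite{Ponce2004}. Since $\sup_n\mathcal J_{\alpha,s_n}(u_n)=:C<\infty$, every $u_n$ lies in $H_0^{s_n}(\Omega)$. We extend each $u_n$ by zero outside $\Omega$, which is the convention built into the definition of $\mathcal E_s$ on $\mathbb R^{2N}$. Dropping the nonnegative fidelity term gives two uniform bounds:
\[
\|u_n\|_{L^2(\Omega)}^2\le \tfrac{2C}{\alpha}\quad(\text{Lemma}~\ref{lem:coercive}),\qquad \mathcal E_{s_n}(u_n,u_n)\le \tfrac{2C}{\alpha}.
\]
With the normalization $C_{N,s}\asymp 1-s$ fixed at the start of this section, the second bound reads
\[
(1-s_n)\iint_{\mathbb R^{2N}}\frac{|u_n(x)-u_n(y)|^2}{|x-y|^{N+2s_n}}\,dx\,dy\le C'.
\]
This is exactly the hypothesis of the BBM/Ponce compactness theorem, applied with radial mollifier-type kernels
\[
\rho_n(h)=c_n(1-s_n)|h|^{-(N+2s_n-2)}\mathbf 1_{\{|h|<R\}},
\]
normalized to be probability densities that concentrate at the origin as $s_n\to1^-$. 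Here $R$ is at least the diameter of a ball containing $\Omega$ plus a margin. Restricting the double integral to $|x-y|<R$ only decreases it, so the bound transfers to $\iint \frac{|u_n(x)-u_n(y)|^2}{|x-y|^2}\rho_n(x-y)\,dx\,dy\le C''$. The factor $|h|^{-2}$ is what converts the $|h|^{-(N+2s_n)}$ weight into this form.

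The key steps, in order, are as follows.

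\textbf{Step 1.} Verify that the $\rho_n$ are admissible, with total mass one and $\int_{|h|>\varepsilon}\rho_n\to0$ for every $\varepsilon>0$. This is a direct polar-coordinate computation: $\int_{|h|<R}|h|^{-(N+2s-2)}dh=|\mathbb S^{N-1}|R^{2-2s}/(2-2s)$, so $c_n$ stays bounded.

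\textbf{Step 2.} Invoke Ponce's compactness theorem (a nonlocal Fréchet--Kolmogorov argument, applied on a ball $B\supset\overline\Omega$ to the zero extensions) to get a subsequence with $u_n\to u$ in $L^2(B)$ and $u\in H^1(B)$.

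\textbf{Step 3.} Since every $u_n$ vanishes on $B\setminus\Omega$, so does $u$. For a Lipschitz domain, an $H^1(\mathbb R^N)$ function vanishing a.e. outside $\Omega$ belongs to $H_0^1(\Omega)$, so $u\in H_0^1(\Omega)$.

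\textbf{Step 4.} Prove the liminf inequality, using the $\Gamma$-liminf part of BBM (Ponce, Thm.~8; see also \cite{Davila02}): $L^2$-convergence of $u_n\to u$ gives
\[
K_N\int|\nabla u|^2\le\liminf_n\iint\frac{|u_n(x)-u_n(y)|^2}{|x-y|^2}\rho_n(x-y),
\]
where $K_N$ is a dimensional constant. Here $C_{N,s}$ is chosen so that the pointwise limit of $\mathcal E_s(u,u)$ is exactly $\int|\nabla u|^2$, and that same constant $K_N$ appears in the liminf. Discarding the region $|x-y|\ge R$ only lowers $\mathcal E_{s_n}$, and the identical weight appears on both sides, so the constant lines up and we obtain $\|\nabla u\|^2_{L^2(\Omega)}\le\liminf_n\mathcal E_{s_n}(u_n,u_n)$.

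The main obstacle is this bookkeeping in Steps 1 and 4. The BBM results are stated for a fixed sequence of kernels, whereas here the exponent $s_n$ varies along the sequence, so the normalization must produce exactly the constant making the pointwise limit equal $\int|\nabla u|^2$. If the kernel formulation proves awkward, a fallback would be Fourier: $\mathcal E_s(u,u)=\int|\xi|^{2s}|\hat u|^2$ under the standard $C_{N,s}$, which is consistent with $C_{N,s}\asymp 1-s$. Compactness would then follow from the uniform bound on $\int(1+|\xi|^{2s_n})|\hat u_n|^2$ together with uniform support in $\Omega$, via Riesz--Kolmogorov with $s_n\ge s_0>0$. The liminf would follow from Fatou's lemma applied to $|\xi|^{2s_n}|\hat u_n|^2\to|\xi|^2|\hat u|^2$ pointwise along a further subsequence, using that $\hat u_n\to\hat u$ locally uniformly because the supports are uniformly bounded. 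This Fourier route is probably the cleaner one to carry out in full.
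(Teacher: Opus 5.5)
Your proposal is correct and follows the same route as the paper. The paper's proof records the uniform bounds on $\|u_n\|_{L^2(\Omega)}$ and $\mathcal E_{s_n}(u_n,u_n)$ and then simply cites the Bourgain--Brezis--Mironescu/Ponce compactness theorem and its liminf inequality; your explicit kernels $\rho_n$, zero extension to a ball, identification of the limit in $H_0^1(\Omega)$ via the Lipschitz boundary, and matching of constants through the normalization of $C_{N,s}$ supply exactly the details it leaves implicit, and your Fourier alternative (Rellich compactness from a uniform $H^{s_0}$ bound, then Fatou using $\hat u_n\to\hat u$ uniformly) is also valid.
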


\begin{proof}
By Lemma~\ref{lem:coercive},
$\sup_n\|u_n\|_{L^2(\Omega)}<\infty.$
Moreover, we have,
$\sup_n \mathcal{E}_{s_n}(u_n,u_n)<\infty,$
since
\[
\mathcal J_{\alpha,s_n}(u_n)
=
\frac12\|Ku_n-y^\delta\|_{L^2(\Omega)}^2
+
\frac{\alpha}{2}
\|u_n\|_{L^2(\Omega)}^2
+
\frac{\alpha}{2}
\mathcal{E}_{s_n}(u_n,u_n).
\]
The Bourgain--Brezis--Mironescu compactness theorem
(see Bourgain, Brezis and Mironescu~\cite{BourgainBrezisMironescu2001},
Ponce~\cite{Ponce2004}, and D\'avila~\cite{Davila02})
therefore implies that $\{u_n\}$ is relatively compact in $L^2(\Omega)$.
Hence, after extraction of a subsequence,
$u_n\to u \text{ strongly in }L^2(\Omega),$
for some $u\in H_0^1(\Omega)$. Furthermore, the same theorem yields
\[
\|\nabla u\|_{L^2(\Omega)}^2
\le
\liminf_{n\to\infty}
\mathcal{E}_{s_n}(u_n,u_n),
\]
which completes the proof.
\end{proof}

With compactness in hand, it remains to control the fidelity term
along a convergent sequence.

\begin{lemma}[Continuity of the fidelity term]\label{lem:fidelity-cont}
Let \(K:L^2(\Omega)\to L^2(\Omega)\) be bounded and linear. If
\(u_n\to u\) strongly in \(L^2(\Omega)\), then
\[
\|Ku_n-y^\delta\|_{L^2(\Omega)}^2
\to
\|Ku-y^\delta\|_{L^2(\Omega)}^2.
\]
\end{lemma}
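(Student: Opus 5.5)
The plan is to show first that $Ku_n\to Ku$ strongly in $L^2(\Omega)$, and then to pass from convergence of the vectors to convergence of the squared norms. Since $K\in\mathcal L(L^2(\Omega),L^2(\Omega))$, linearity and boundedness give
\[
\|Ku_n-Ku\|_{L^2(\Omega)}=\|K(u_n-u)\|_{L^2(\Omega)}\le\|K\|\,\|u_n-u\|_{L^2(\Omega)}\longrightarrow0 .
\]
Subtracting the fixed datum $y^\delta$ then yields $Ku_n-y^\delta\to Ku-y^\delta$ strongly in $L^2(\Omega)$.

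Next we use continuity of the norm and of squaring. To keep it quantitative, set $a_n:=Ku_n-y^\delta$ and $a:=Ku-y^\delta$. We write
\[
\bigl|\|a_n\|^2-\|a\|^2\bigr|=\bigl|\|a_n\|-\|a\|\bigr|\,\bigl(\|a_n\|+\|a\|\bigr)\le\|a_n-a\|\,\bigl(\|a_n\|+\|a\|\bigr).
\]
Here $\|a_n\|\le\|a\|+\|a_n-a\|$ is bounded because $a_n\to a$. Combined with the bound $\|a_n-a\|\le\|K\|\|u_n-u\|_{L^2(\Omega)}$ from the first step, this gives the claimed convergence.

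No step is a genuine obstacle here. The one point to stress is that the statement relies on $K$ being bounded on $L^2(\Omega)$, which is the standing assumption of this subsection. If $K$ were only bounded on $H_0^s(\Omega)$ or $H_0^1(\Omega)$, strong $L^2$ convergence of $u_n$ would not control $Ku_n$. This lemma is what allows the fidelity term to be treated as a continuous perturbation in the subsequent $\Gamma$-convergence argument.
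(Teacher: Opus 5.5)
Your proof is correct and follows the paper's argument: boundedness of $K$ on $L^2(\Omega)$ gives $Ku_n-y^\delta\to Ku-y^\delta$ strongly, and the squared norms then converge. Your reverse-triangle estimate only makes explicit the final step, which the paper states without detail.
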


\begin{proof}
Since \(K\) is bounded on \(L^2(\Omega)\),
\[
\|Ku_n-Ku\|_{L^2(\Omega)}
\le
\|K\|\,\|u_n-u\|_{L^2(\Omega)}
\to 0.
\]
Hence \(Ku_n-y^\delta\to Ku-y^\delta\) strongly in \(L^2(\Omega)\), and
therefore the squared norms converge.
\end{proof}

The three lemmas above combine directly to give the \(\Gamma\)-liminf
inequality.

\begin{proposition}[Gamma-liminf inequality]\label{prop:gamma-liminf}
Let \(s_n\to1^{-}\) and let \(u_n\to u\) strongly in \(L^2(\Omega)\).
Then
\[
\mathcal J_{\alpha,1}(u)
\le
\liminf_{n\to\infty}
\mathcal J_{\alpha,s_n}(u_n).
\]
\end{proposition}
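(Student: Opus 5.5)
We argue in the standard way for $\Gamma$-liminf inequalities, treating the trivial case first.

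\emph{Step 1: trivial case.} If $L:=\liminf_{n\to\infty}\mathcal J_{\alpha,s_n}(u_n)=+\infty$, there is nothing to prove. So assume $L<\infty$.

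\emph{Step 2: reduction to a bounded subsequence.} Choose a subsequence $\{u_{n_k}\}$ along which $\mathcal J_{\alpha,s_{n_k}}(u_{n_k})\to L$. Discarding finitely many terms, we may assume $\sup_k\mathcal J_{\alpha,s_{n_k}}(u_{n_k})<\infty$. In particular every $u_{n_k}$ lies in $H_0^{s_{n_k}}(\Omega)$, so the finite expression for $\mathcal J_{\alpha,s_{n_k}}$ applies.

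\emph{Step 3: compactness and identification of the limit.} Lemma~\ref{lem:BBMcompact} applies to $\{u_{n_k}\}$. It yields a further subsequence converging strongly in $L^2(\Omega)$ to some $\tilde u\in H_0^1(\Omega)$, with
\[
\|\nabla\tilde u\|_{L^2(\Omega)}^2\le\liminf\mathcal E_{s_{n_k}}(u_{n_k},u_{n_k})
\]
along that subsequence. Since $u_n\to u$ in $L^2(\Omega)$ by hypothesis, uniqueness of limits gives $\tilde u=u$. Hence $u\in H_0^1(\Omega)$ and $\mathcal J_{\alpha,1}(u)$ is given by its finite formula.

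\emph{Step 4: term-by-term lower bounds.} Along the final subsequence we treat each term of the functional separately:
\begin{itemize}
\item the fidelity term converges by Lemma~\ref{lem:fidelity-cont};
\item $\|u_{n_k}\|_{L^2(\Omega)}^2\to\|u\|_{L^2(\Omega)}^2$ by strong convergence;
\item the energy term obeys the liminf bound from Step 3.
\end{itemize}
The liminf of a sum in which all but one term converges equals the sum of the limits plus the liminf of the remaining term. Therefore
\[
\mathcal J_{\alpha,1}(u)\le\liminf_k\mathcal J_{\alpha,s_{n_k}}(u_{n_k})=L.
\]
Equality on the right holds because the whole subsequence realizes the limit $L$.

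\emph{Main obstacle.} The only delicate point is Step 3. We must ensure the BBM compactness and lower bound apply in the $H_0^s$ setting, with functions extended by zero outside $\Omega$ and the normalization $C_{N,s}\asymp 1-s$. We must also ensure the limit lies in $H_0^1(\Omega)$ and not merely in $H^1$. We take this from the cited BBM/Ponce results for zero-extended functions on $\mathbb R^N$: the limit belongs to $H^1(\mathbb R^N)$ and vanishes outside $\Omega$, hence lies in $H_0^1(\Omega)$ for Lipschitz $\Omega$. Lemma~\ref{lem:BBMcompact} already packages this, so the proof reduces to the subsequence bookkeeping above.
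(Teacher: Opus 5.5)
Your proof is correct and follows the same route as the paper. You dispose of the case $\liminf=+\infty$, pass to a subsequence with bounded energies, use Lemma~\ref{lem:BBMcompact} to get $u\in H_0^1(\Omega)$ and the lower bound on $\mathcal E_{s_n}$, and handle the fidelity and $L^2$ terms by Lemma~\ref{lem:fidelity-cont} and strong convergence. Your choice of a subsequence that realizes the $\liminf$, and your identification $\tilde u=u$, make explicit what the paper's ``up to a subsequence'' leaves implicit; this is what ensures the final inequality concerns the original $\liminf$.
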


\begin{proof}
If
\[
\liminf_{n\to\infty}\mathcal J_{\alpha,s_n}(u_n)=+\infty,
\]
there is nothing to prove. Hence assume, up to a subsequence, that
\[
\sup_n \mathcal J_{\alpha,s_n}(u_n)<\infty.
\]
By Lemma~\ref{lem:BBMcompact}, the limit \(u\) belongs to $H_0^1(\Omega)$, and
\[
\|\nabla u\|_{L^2(\Omega)}^2
\le
\liminf_{n\to\infty}\mathcal{E}_{s_n}(u_n,u_n).
\]
Moreover, by Lemma~\ref{lem:fidelity-cont},
\[
\|Ku_n-y^\delta\|_{L^2(\Omega)}^2
\to
\|Ku-y^\delta\|_{L^2(\Omega)}^2,
\]
and by the strong \(L^2\)-convergence,
$\|u_n\|_{L^2(\Omega)}^2 \to \|u\|_{L^2(\Omega)}^2.$
Combining these three facts gives
\[
\begin{aligned}
\liminf_{n\to\infty}\mathcal J_{\alpha,s_n}(u_n)
&\ge
\frac12\|Ku-y^\delta\|_{L^2(\Omega)}^2
+
\frac{\alpha}{2}\|u\|_{L^2(\Omega)}^2
+
\frac{\alpha}{2}\|\nabla u\|_{L^2(\Omega)}^2 =
\mathcal J_{\alpha,1}(u).
\end{aligned}
\]
This proves the $\Gamma$-liminf inequality.
\end{proof}

It remains to produce a recovery sequence, which gives the matching
$\Gamma$-limsup inequality.

\begin{proposition}[Gamma-limsup inequality]\label{prop:gamma-limsup}
For every $u\in L^2(\Omega)$, there exists a sequence
$\{u_s\}_{s\in(0,1)}\subset L^2(\Omega)$ such that
\[
u_s\to u
\quad\text{strongly in }L^2(\Omega)
\]
and
\[
\mathcal J_{\alpha,1}(u)
\ge
\limsup_{s\to1^-}
\mathcal J_{\alpha,s}(u_s).
\]
\end{proposition}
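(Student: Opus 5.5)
The plan is to split into two cases according to whether $u\in H_0^1(\Omega)$. If $u\notin H_0^1(\Omega)$, then $\mathcal J_{\alpha,1}(u)=+\infty$ and the inequality is trivial. In that case we take the constant sequence $u_s:=u$, which converges to $u$ in $L^2(\Omega)$; the $\limsup$ may be anything, since it is bounded by $+\infty$.

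For $u\in H_0^1(\Omega)$ we again take the constant recovery sequence $u_s:=u$ for all $s\in(0,1)$. We first check that $u\in H_0^s(\Omega)$ for every $s<1$, so that $\mathcal J_{\alpha,s}(u)$ is given by the finite formula rather than $+\infty$. This follows from the continuous embedding $H_0^1(\Omega)\hookrightarrow H_0^s(\Omega)$: extend $u$ by zero to $\tilde u\in H^1(\mathbb R^N)$, whose Gagliardo seminorm satisfies
\[
\iint_{\mathbb R^{2N}}\frac{|\tilde u(x)-\tilde u(y)|^2}{|x-y|^{N+2s}}\,dx\,dy<\infty
\]
by the standard estimate splitting $|x-y|<1$ and $|x-y|\ge1$ (cf.\ \cite{DiNezza12}). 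Convergence $u_s\to u$ in $L^2(\Omega)$ is then trivial.

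With the constant sequence, the fidelity term and $\|u\|_{L^2(\Omega)}^2$ do not depend on $s$. By the normalization of $C_{N,s}$ fixed at the start of Section~\ref{sec:Tikhonov-model} (the Bourgain--Brezis--Mironescu formula), $\mathcal E_s(u,u)\to\|\nabla u\|_{L^2(\Omega)}^2$. Hence Theorem~\ref{thm:bbm-functional} gives
\[
\lim_{s\to1^-}\mathcal J_{\alpha,s}(u_s)=\mathcal J_{\alpha,1}(u),
\]
which yields the limsup inequality, indeed with equality.

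The only delicate point is the membership $H_0^1(\Omega)\subset H_0^s(\Omega)$ together with the applicability of the BBM limit to the zero extension over all of $\mathbb R^{2N}$, rather than over $\Omega\times\Omega$. For the latter I would rely on the normalization assumed in the paper, which is stated precisely for $u\in H_0^1(\Omega)$ with the integral over $\mathbb R^{2N}$. Since $\tilde u\in H^1(\mathbb R^N)$, the whole-space BBM result applies directly and no boundary regularity is needed.
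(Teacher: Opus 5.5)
Your proposal is correct and follows essentially the same route as the paper. The paper also disposes of the trivial case $u\notin H_0^1(\Omega)$, takes the constant recovery sequence $u_s=u$ for $u\in H_0^1(\Omega)$, uses $H_0^1(\Omega)\subset H_0^s(\Omega)$, and invokes the BBM normalization for $\mathcal E_s(u,u)$ while the remaining terms do not depend on $s$; your zero-extension argument only fills in the inclusion $H_0^1(\Omega)\subset H_0^s(\Omega)$, which the paper asserts without proof.
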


\begin{proof}
If $u\notin H_0^1(\Omega)$, then $\mathcal J_{\alpha,1}(u)=+\infty$. Hence assume $u\in H_0^1(\Omega)$. We choose the constant recovery sequence $u_s=u.$ Then clearly
\[
u_s\to u \, \text{strongly in }L^2(\Omega).
\]
Moreover, since $u\in H_0^1(\Omega)$, we have $u\in H_0^s(\Omega)$
for every $s\in(0,1)$. Hence $\mathcal J_{\alpha,s}(u_s)$ is finite.

By the Bourgain--Brezis--Mironescu formula,
\[
\mathcal{E}_s(u,u)\to \|\nabla u\|_{L^2(\Omega)}^2
\qquad\text{as }s\to1^-.
\]
The remaining terms in the functional do not depend on $s$. Therefore
\[
\begin{aligned}
\lim_{s\to1^-}\mathcal J_{\alpha,s}(u)
&=
\frac12\|Ku-y^\delta\|_{L^2(\Omega)}^2
+
\frac{\alpha}{2}\|u\|_{L^2(\Omega)}^2
+
\frac{\alpha}{2}\|\nabla u\|_{L^2(\Omega)}^2=
\mathcal J_{\alpha,1}(u).
\end{aligned}
\]
Thus
\[
\limsup_{s\to1^-}\mathcal J_{\alpha,s}(u_s)
\le
\mathcal J_{\alpha,1}(u),
\]
which proves the recovery sequence property.
\end{proof}

Together, the liminf and limsup inequalities give $\Gamma$-convergence.

\begin{theorem}[Gamma-convergence of the fractional Tikhonov functionals]
\label{thm:gamma-convergence}
As $s\to1^-$, the functionals
$\mathcal J_{\alpha,s}:L^2(\Omega)\to(-\infty,+\infty]$
$\Gamma$-converge to
$\mathcal J_{\alpha,1}:L^2(\Omega)\to(-\infty,+\infty]$
with respect to the strong topology of $L^2(\Omega)$.
\end{theorem}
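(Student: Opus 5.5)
The plan is to verify the two defining inequalities of $\Gamma$-convergence along an arbitrary sequence $s_n\to1^-$, since $\Gamma$-convergence as $s\to1^-$ in the metric space $L^2(\Omega)$ is by definition $\Gamma$-convergence along every such sequence. Fix $s_n\to1^-$.

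\emph{Liminf inequality.} For every $u\in L^2(\Omega)$ and every sequence $u_n\to u$ strongly in $L^2(\Omega)$, we must show $\mathcal J_{\alpha,1}(u)\le\liminf_n\mathcal J_{\alpha,s_n}(u_n)$. This is exactly Proposition~\ref{prop:gamma-liminf}. If the $\liminf$ is infinite there is nothing to prove. Otherwise, pass to a subsequence realizing the $\liminf$ with bounded energies. Lemma~\ref{lem:BBMcompact} then places the limit $u$ in $H_0^1(\Omega)$ and gives lower semicontinuity of the Dirichlet term. Lemma~\ref{lem:fidelity-cont} and strong convergence pass the fidelity and $L^2$ terms to the limit.

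\emph{Limsup inequality.} For every $u$ we need a recovery sequence $u_n\to u$ with $\limsup_n\mathcal J_{\alpha,s_n}(u_n)\le\mathcal J_{\alpha,1}(u)$. Proposition~\ref{prop:gamma-limsup} provides it: for $u\notin H_0^1(\Omega)$ the bound is trivial, and for $u\in H_0^1(\Omega)$ the constant sequence works by the BBM formula, as in Theorem~\ref{thm:bbm-functional}. The family version used there restricts to the sequence $s_n$.

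\emph{Main obstacle.} The step that needs care is the subsequence handling in the liminf. The limit point $u$ is fixed in advance, and Lemma~\ref{lem:BBMcompact} only asserts that \emph{some} further subsequence converges to \emph{some} $H_0^1$ function. Uniqueness of strong $L^2$ limits identifies that function with $u$, so $u\in H_0^1(\Omega)$. The liminf along the extracted subsequence equals the original $\liminf$ because we first chose a subsequence attaining it. A second point to check is that the BBM compactness and liminf statements cited in Lemma~\ref{lem:BBMcompact} apply to functions extended by zero outside $\Omega$. Since $\mathcal E_s$ integrates over $\mathbb R^{2N}$ and $\Omega$ is Lipschitz, the limit vanishes outside $\Omega$ and its zero extension lies in $H^1(\mathbb R^N)$, which gives $u\in H_0^1(\Omega)$. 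Combining the two inequalities then yields the theorem.
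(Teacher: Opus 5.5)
Your proposal is correct and matches the paper's proof: the paper also obtains the theorem by combining Proposition~\ref{prop:gamma-liminf}, built on Lemmas~\ref{lem:BBMcompact} and~\ref{lem:fidelity-cont}, with the constant recovery sequence of Proposition~\ref{prop:gamma-limsup}. Your extra remarks make explicit details that the paper's proof of Proposition~\ref{prop:gamma-liminf} covers only with ``up to a subsequence'': first extracting a subsequence that realizes the $\liminf$, then identifying the BBM limit with $u$ by uniqueness of $L^2$ limits, and finally using the zero extension on a Lipschitz domain to conclude $u\in H_0^1(\Omega)$.
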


\begin{proof}
The $\Gamma$-liminf inequality follows from
Proposition~\ref{prop:gamma-liminf}, while the $\Gamma$-limsup
inequality follows from Proposition~\ref{prop:gamma-limsup}. Hence
$\mathcal J_{\alpha,s} \Gamma$-converges to
\(\mathcal J_{\alpha,1}\) in \(L^2(\Omega)\).
\end{proof}

\(\Gamma\)-convergence alone does not pass to minimizers without a
matching compactness property, which we record next.

\begin{theorem}[Equicoercivity]\label{thm:equicoercive}
The family \(\{\mathcal J_{\alpha,s}\}\) is equicoercive along
\(s\to1^{-}\) in \(L^2(\Omega)\). More precisely, if \(s_n\to1^{-}\)
and \(\mathcal J_{\alpha,s_n}(u_n)\le C\), then \(\{u_n\}\) is
relatively compact in \(L^2(\Omega)\).
\end{theorem}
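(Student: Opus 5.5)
The plan is to reduce the statement directly to Lemma~\ref{lem:coercive} and Lemma~\ref{lem:BBMcompact}. These two lemmas already contain the analytic content; what remains is to check that the hypothesis $\mathcal J_{\alpha,s_n}(u_n)\le C$ gives exactly what they need, and that relative compactness follows from compactness of subsequences.

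First, given $s_n\to1^-$ and $\mathcal J_{\alpha,s_n}(u_n)\le C$, Lemma~\ref{lem:coercive} yields $\|u_n\|_{L^2(\Omega)}^2\le 2C/\alpha$. Since the fidelity term and the $L^2$ term are both nonnegative, the definition of $\mathcal J_{\alpha,s_n}$ also gives
\[
\mathcal E_{s_n}(u_n,u_n)\le \frac{2C}{\alpha}.
\]
Moreover each $u_n\in H_0^{s_n}(\Omega)$, since otherwise the functional would be $+\infty$. Extending $u_n$ by zero outside $\Omega$, the energy $\mathcal E_{s_n}$ is computed over $\mathbb R^{2N}$, which is exactly the setting of the Bourgain--Brezis--Mironescu compactness theorem (in Ponce's form, with $C_{N,s}\asymp1-s$).

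Second, to obtain relative compactness, I would show that every subsequence of $\{u_n\}$ has a further subsequence converging strongly in $L^2(\Omega)$. Any subsequence still satisfies $\sup\mathcal J_{\alpha,s_{n_k}}(u_{n_k})\le C$ with $s_{n_k}\to1^-$. Lemma~\ref{lem:BBMcompact} therefore applies to it and produces a further subsequence converging in $L^2(\Omega)$ to some $u\in H_0^1(\Omega)$. In the metric space $L^2(\Omega)$, sequential relative compactness of this kind is equivalent to relative compactness, which gives the claim.

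The only genuine obstacle lies inside Lemma~\ref{lem:BBMcompact}: uniform $L^2$ bounds together with $(1-s_n)$-normalized Gagliardo bounds must imply precompactness. This is the Fréchet--Kolmogorov criterion, with equicontinuity of translations supplied by BBM/Ponce. Tightness is automatic because all the functions are supported in the bounded set $\overline\Omega$. Since Lemma~\ref{lem:BBMcompact} is already stated earlier, I would simply invoke it. I would also remark that, combined with Theorem~\ref{thm:gamma-convergence}, this equicoercivity is what will let minimizers converge in the next result.
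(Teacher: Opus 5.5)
Your proposal is correct and follows essentially the same route as the paper. Like the paper, you bound $\|u_n\|_{L^2(\Omega)}$ via Lemma~\ref{lem:coercive}, read off $\sup_n\mathcal E_{s_n}(u_n,u_n)<\infty$ from the definition of $\mathcal J_{\alpha,s_n}$, and invoke Lemma~\ref{lem:BBMcompact}; your extra subsequence-of-a-subsequence step is harmless but not needed, since Lemma~\ref{lem:BBMcompact} already asserts relative compactness of the whole sequence.
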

\begin{proof}
Let $
\mathcal J_{\alpha,s_n}(u_n)\le c, \, s_n\to1^-.$
By Lemma~\ref{lem:coercive},
$\sup_n\|u_n\|_{L^2(\Omega)}<\infty.$
Moreover,
$\sup_n \mathcal{E}_{s_n}(u_n,u_n)<\infty$.
Therefore Lemma~\ref{lem:BBMcompact} implies that $\{u_n\}$ is relatively compact in
$L^2(\Omega)$. Hence every sequence $\{u_n\}$ satisfying $s_n\to1^-$ and
$\sup_n \mathcal{J}_{\alpha,s_n}(u_n)<\infty$ is relatively compact in $L^2(\Omega)$. This proves equicoercivity along 
$s\to1^-$.
\end{proof}

$\Gamma$-convergence and equicoercivity together now yield the main
result of this subsection.

\begin{theorem}[Convergence of minimizers]
Let $u_{\alpha,s}^{\delta}$ be the unique minimizer of
$\mathcal J_{\alpha,s}$, and let $u_{\alpha,1}^{\delta}$ be the
unique minimizer of $\mathcal J_{\alpha,1}$. Then, as $s\to1^{-}$,
\[
u_{\alpha,s}^{\delta}\to u_{\alpha,1}^{\delta} \text{ strongly in }L^2(\Omega).
\]
\end{theorem}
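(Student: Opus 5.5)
The plan is to apply the fundamental theorem of $\Gamma$-convergence, using Theorem~\ref{thm:gamma-convergence} and Theorem~\ref{thm:equicoercive}, and then to upgrade subsequential convergence to convergence of the whole family through uniqueness of the limit minimizer. First, $u_{\alpha,s}^\delta$ exists and is unique for each $s$. Since $K$ is bounded on $L^2(\Omega)$, it is also bounded on $H_0^s(\Omega)$, so Theorem~\ref{thm:existence} applies. The limit minimizer $u_{\alpha,1}^\delta$ exists and is unique by the same direct-method argument on $H_0^1(\Omega)$, because $\mathcal J_{\alpha,1}$ is strictly convex, coercive and weakly lower semicontinuous there.

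It suffices to prove the following: for every sequence $s_n\to1^-$ there is a subsequence along which $u_n:=u_{\alpha,s_n}^\delta\to u_{\alpha,1}^\delta$ in $L^2(\Omega)$. We begin with a uniform energy bound. Minimality with the competitor $0\in H_0^{s_n}(\Omega)$ gives
\[
\mathcal J_{\alpha,s_n}(u_n)\le\mathcal J_{\alpha,s_n}(0)=\tfrac12\|y^\delta\|_{L^2(\Omega)}^2 .
\]
Theorem~\ref{thm:equicoercive}, or directly Lemma~\ref{lem:BBMcompact}, then yields a subsequence with $u_n\to\bar u$ strongly in $L^2(\Omega)$, and the limit satisfies $\bar u\in H_0^1(\Omega)$.

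Next, $\bar u$ is a minimizer of $\mathcal J_{\alpha,1}$. Fix $v\in H_0^1(\Omega)$ and let $v_s=v$ be the constant recovery sequence of Proposition~\ref{prop:gamma-limsup}. Proposition~\ref{prop:gamma-liminf} together with minimality of $u_n$ gives
\[
\mathcal J_{\alpha,1}(\bar u)\le\liminf_n\mathcal J_{\alpha,s_n}(u_n)\le\limsup_n\mathcal J_{\alpha,s_n}(v)\le\mathcal J_{\alpha,1}(v).
\]
For $v\notin H_0^1(\Omega)$ the inequality is trivial. Hence $\bar u$ minimizes $\mathcal J_{\alpha,1}$, and uniqueness gives $\bar u=u_{\alpha,1}^\delta$.

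Since the limit does not depend on the subsequence, the standard Urysohn sub-subsequence argument gives convergence of the whole family as $s\to1^-$. Taking $v=u_{\alpha,1}^\delta$ in the display above also shows that the minimal values converge.

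The main obstacle is the compactness step. It rests on the BBM compactness theorem for sequences with bounded normalized fractional energies and varying $s_n$, which Lemma~\ref{lem:BBMcompact} already packages. The only check needed in applying it is that the bound $\sup_n\mathcal J_{\alpha,s_n}(u_n)<\infty$ really holds, and this is supplied by the comparison with $u=0$.
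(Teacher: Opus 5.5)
Your proposal is correct and takes the same route as the paper: equicoercivity plus $\Gamma$-convergence, via the fundamental theorem of $\Gamma$-convergence, give subsequential convergence to a minimizer of $\mathcal J_{\alpha,1}$, and uniqueness of that minimizer gives convergence of the whole family. The only difference is that you unpack the fundamental theorem explicitly, and in doing so you supply the uniform bound $\mathcal J_{\alpha,s_n}(u_n)\le\tfrac12\|y^\delta\|_{L^2(\Omega)}^2$, needed to apply equicoercivity to the minimizers, which the paper leaves implicit.
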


\begin{proof}
By Theorem \ref{thm:equicoercive}, the family $\{\mathcal J_{\alpha,s}\}$ is equicoercive
in $L^2(\Omega)$. By Theorem~\ref{thm:gamma-convergence}, $\mathcal J_{\alpha,s}$
$\Gamma$-converges to $\mathcal J_{\alpha,1}$ with respect to the
strong topology of $L^2(\Omega)$. The fundamental theorem of
$\Gamma$-convergence therefore implies that every sequence of
minimizers $u_{\alpha,s}^{\delta}$ converges, up to subsequences, to a
minimizer of $\mathcal J_{\alpha,1}$. Since
$\mathcal J_{\alpha,1}$ is strictly convex, its minimizer is unique.
Hence the whole family converges to $u_{\alpha,1}^{\delta}$ strongly
in $L^2(\Omega)$.
\end{proof}

This completes the passage to the classical limit: as $s\to1^-$,
the fractional Tikhonov functionals, their minimizers, and the
underlying energies all converge to their classical $H_0^1$
counterparts. We turn next to numerical experiments illustrating the
fractional-order theory of Sections~\ref{sec:var-form}--\ref{sec:application}.

\section{Numerical experiments}
\label{sec:numerical-experiments}

We complement the analytical results with numerical experiments for the
backward fractional heat problem of Section~\ref{sec:application}: a
representative reconstruction, the effect of the penalty order, a
comparison against the classical $H_0^1$ penalty, robustness to the
noise level, and a Monte Carlo test of the mean-square rate of
Theorem~\ref{thm-heat}. Throughout, the restricted fractional Laplacian
is discretized directly from its integral form, not via the spectral
fractional power of the classical Dirichlet Laplacian, so the
experiments approximate the same nonlocal operator analyzed
throughout the paper.

\subsection{Experimental setup}
\label{sec:experimental-setup}

All computations are on $\Omega=(0,1)$ with $n=120$ interior grid
points ($h=1/(n{+}1)$). The bilinear form $a_s$ is discretized by a
rectangle-rule quadrature of the singular kernel on the interior grid,
together with the exact closed-form exterior (tail) contribution
$C_{1,s}(x^{-2s}+(1-x)^{-2s})/(2s)$; the discrete mass matrix is
$M=hI$. The resulting matrix $A_{s,h}=M+S_h$ is diagonalized once via
the generalized eigenproblem $A_{s,h}v=\lambda M v$, and
$e^{-tA_{s,h}}$ is assembled from this eigendecomposition by spectral
calculus. Synthetic data are generated by applying the discrete
semigroup to a prescribed $u^\dagger$ and adding Gaussian-direction
noise rescaled so that $\|\eta^\delta\|_{M_h}=\delta$ exactly, matching
the deterministic noise model of Theorem~\ref{thm:discrepancy}. Except
in the Monte Carlo study, $\alpha$ is selected from a fixed
logarithmic grid of $150$ candidates over $[10^{-10},10^{-2}]$ as the
smallest candidate whose residual reaches the target discrepancy,
\begin{equation}
\label{eq:discrepancy-numerics}
\|K_hu_{\alpha,h}^\delta-y_h^\delta\|_{M_h}\ge\tau\delta,
\qquad \tau=1.05,
\end{equation}
the discretized form of the crossing condition justified by
Theorem~\ref{thm:discrepancy}. Since each figure below uses an
independent noise realization, results at a shared $(s,\delta)$ across
different experiments agree in order of magnitude but not digit for
digit; only the Monte Carlo study in Section~\ref{sec:monte-carlo}
averages over realizations.

\subsection{Representative reconstruction}
\label{sec:representative-reconstruction}

For the smooth target
$u^\dagger(x)=\sin(\pi x)+0.35\sin(3\pi x)+0.15\sin(7\pi x)$, forward
order $s=0.55$, observation time $t=0.006$, and noise level
$\delta=2\times10^{-3}$, the discrepancy principle selects
$\alpha=3.55\times10^{-4}$, yielding
$\|u_{\alpha,h}^\delta-u_h^\dagger\|_{L^2}=4.01\times10^{-3}$ and
$\|u_{\alpha,h}^\delta-u_h^\dagger\|_{H_0^s}=5.03\times10^{-2}$.

\begin{figure}[t]
\centering
\begin{subfigure}[t]{0.48\textwidth}
    \centering
    \includegraphics[width=\textwidth]{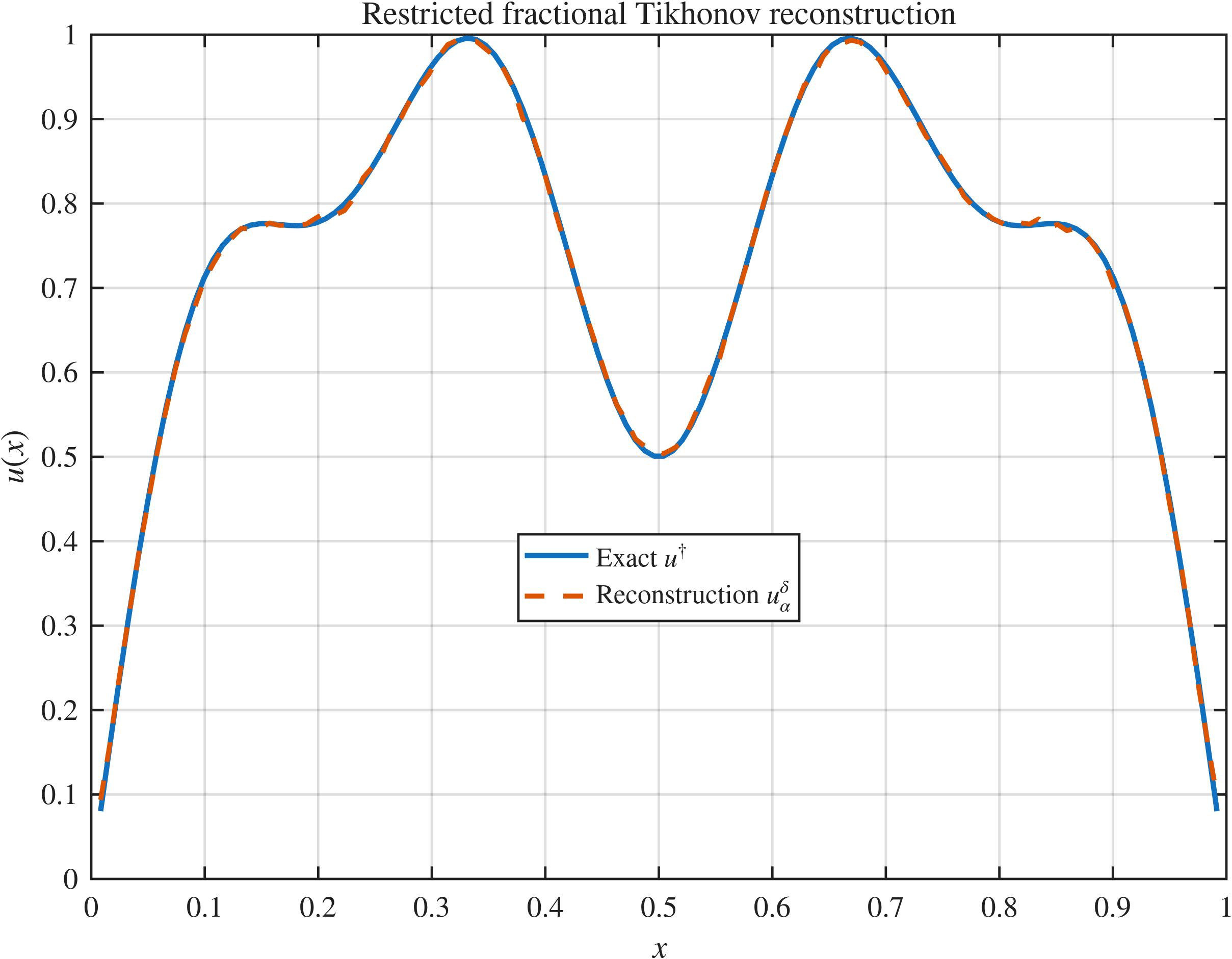}
    \caption{Exact and reconstructed solutions.}
    \label{fig:representative-reconstruction}
\end{subfigure}
\hfill
\begin{subfigure}[t]{0.48\textwidth}
    \centering
    \includegraphics[width=\textwidth]{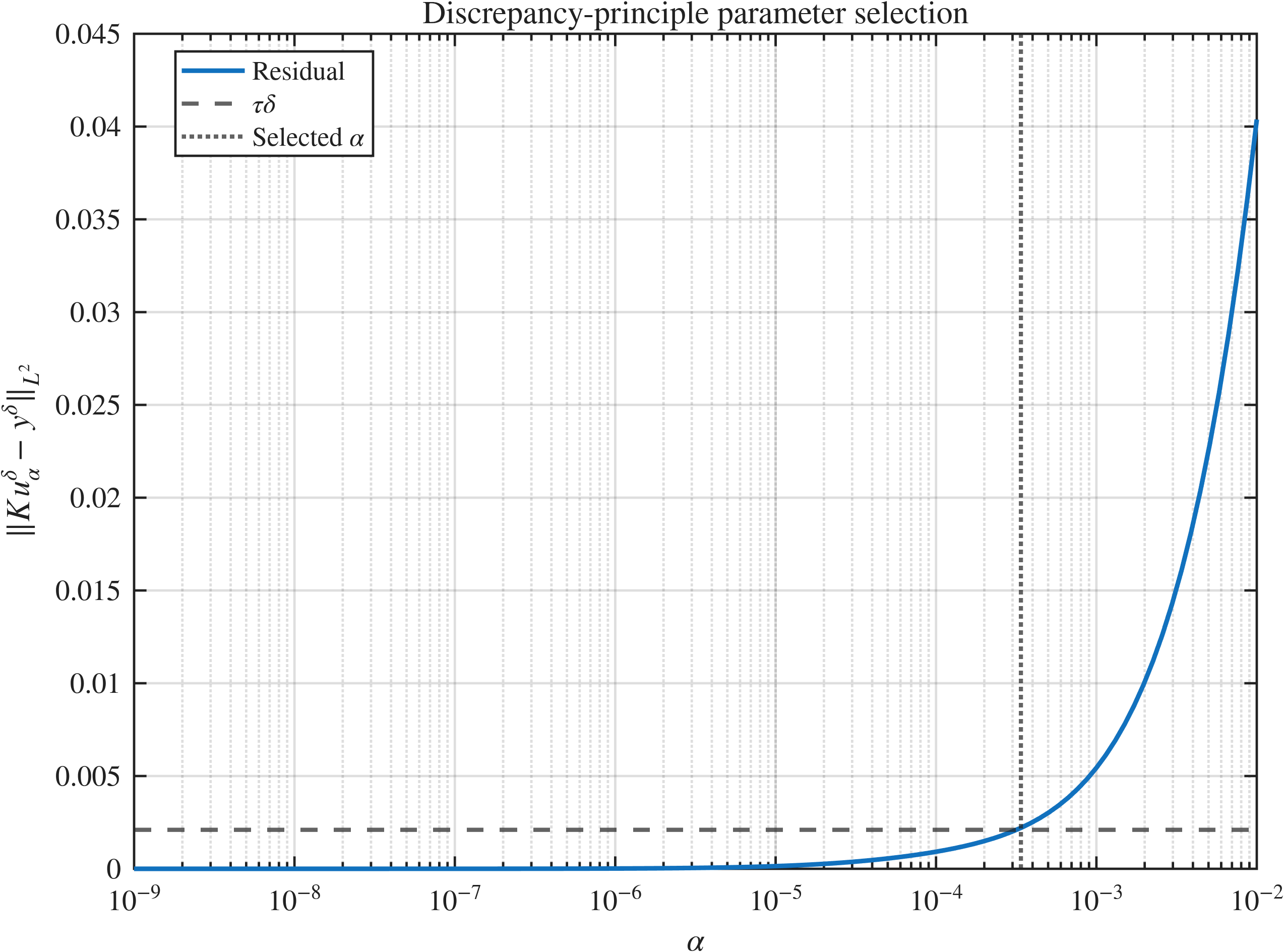}
    \caption{Residual curve and discrepancy threshold.}
    \label{fig:discrepancy-selection}
\end{subfigure}
\caption{Representative reconstruction and discrepancy-principle
parameter selection.}
\label{fig:reconstruction-discrepancy}
\end{figure}

Figure~\ref{fig:representative-reconstruction} shows close agreement
throughout $\Omega$, with the largest deviations at the local extrema,
where the oscillatory components are most sensitive to noise.
Figure~\ref{fig:discrepancy-selection} shows the residual crossing
$\tau\delta$ at the selected $\alpha$, confirming that the parameter is
fixed by the data rather than by visual tuning.

\subsection{Influence of the fractional penalty order}
\label{sec:penalty-order}

Keeping the forward operator, data, and discrepancy rule fixed, we vary
only the penalty order $s_{\mathrm{pen}}\in\{0.25,0.40,0.55,0.70,0.85\}$
against the same $s=0.55$ forward problem. Only $s_{\mathrm{pen}}=0.55$
is matched in the sense of Theorem~\ref{thm-heat}; the remaining cases
illustrate the modeling flexibility of a tunable penalty order rather
than the matched-rate theory.

\begin{figure}[t]
\centering
\begin{subfigure}[t]{0.48\textwidth}
    \centering
    \includegraphics[width=\textwidth]{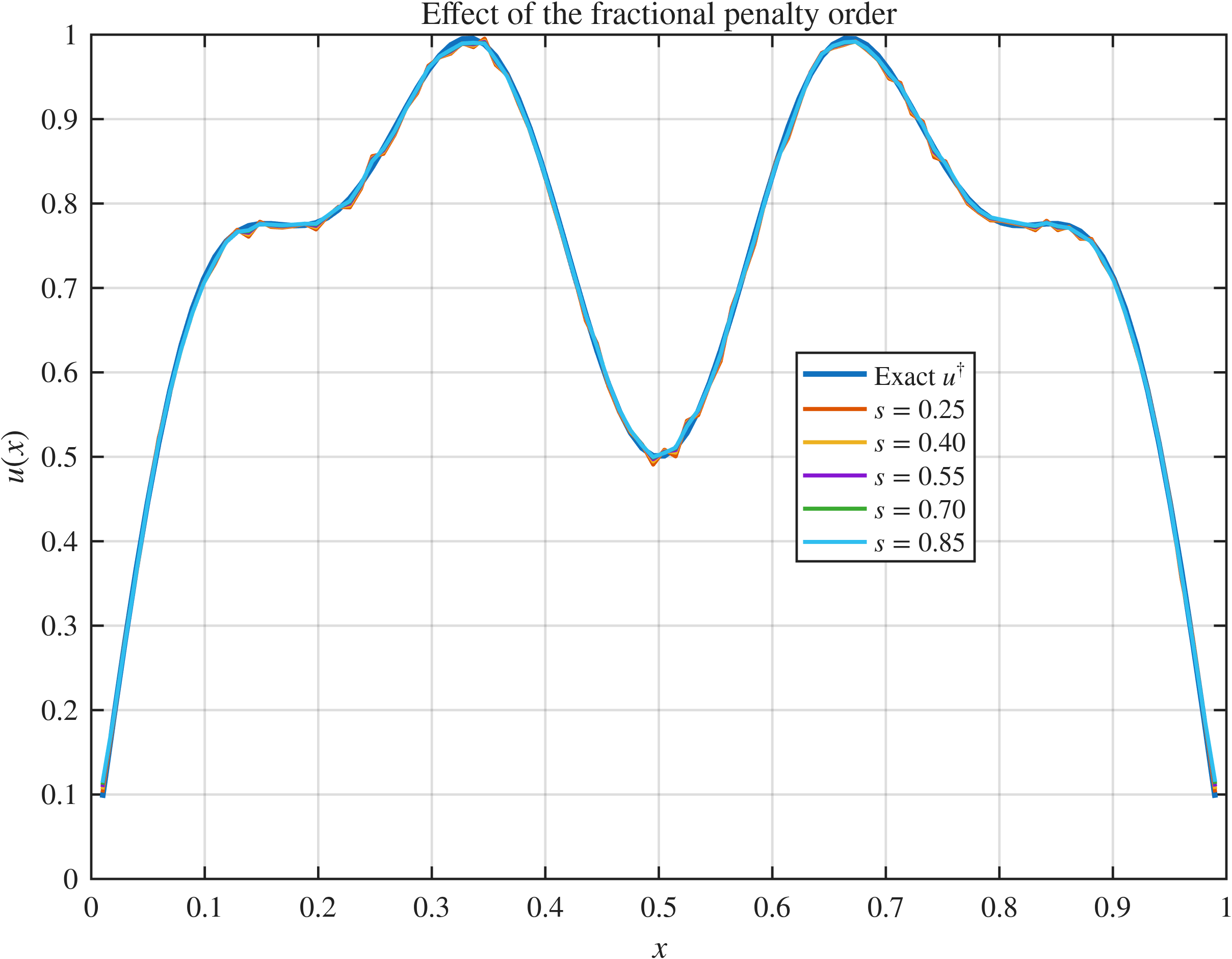}
    \caption{Reconstructions for each penalty order.}
    \label{fig:fractional-order-profiles}
\end{subfigure}
\hfill
\begin{subfigure}[t]{0.48\textwidth}
    \centering
    \includegraphics[width=\textwidth]{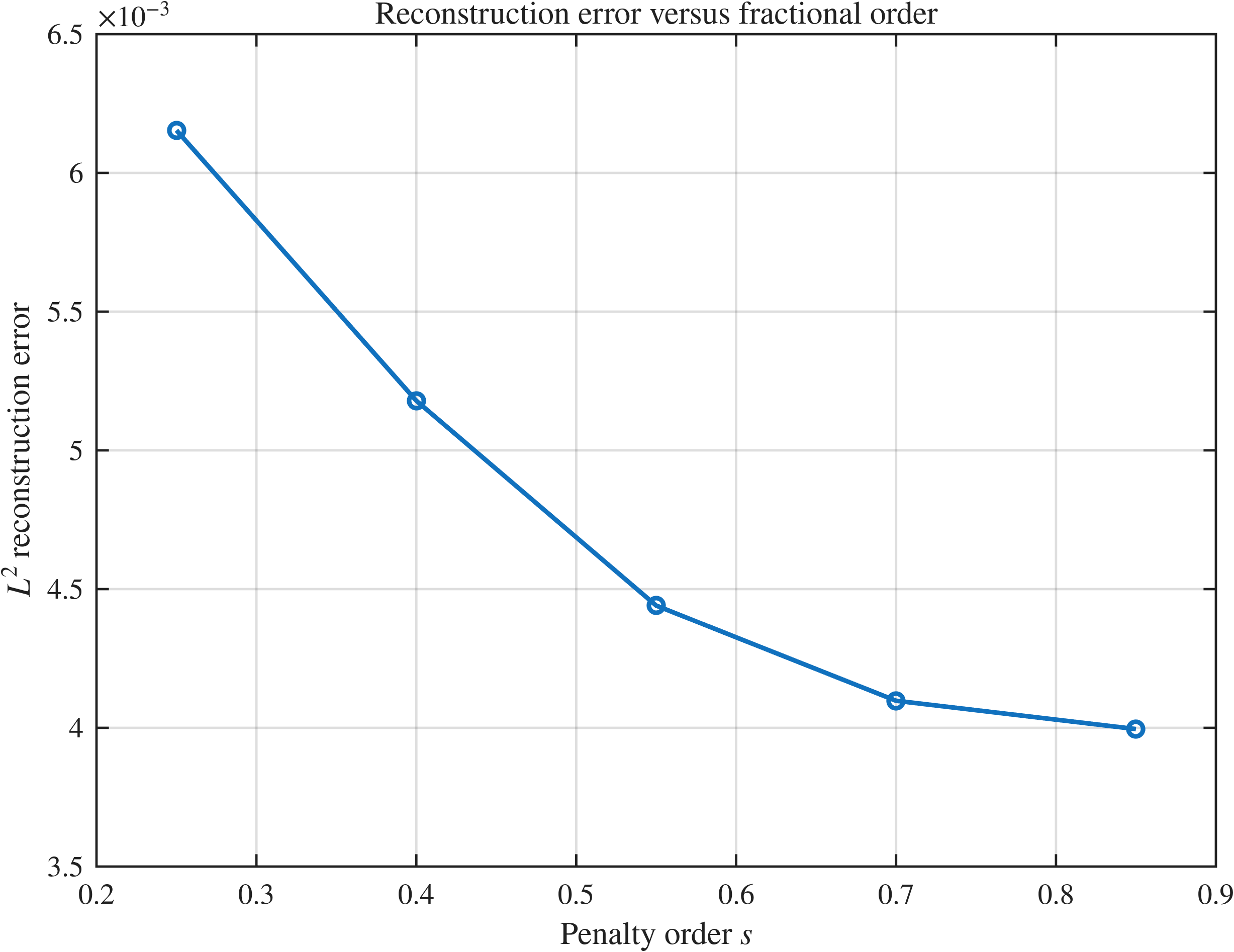}
    \caption{$L^2$ error versus penalty order.}
    \label{fig:fractional-order-errors}
\end{subfigure}
\caption{Influence of the fractional penalty order on reconstruction
quality.}
\label{fig:fractional-order-study}
\end{figure}

\begin{table}[t]
\centering
\caption{Influence of the fractional penalty order.}
\label{tab:fractional-order}
\begin{tabular}{ccc}
\toprule
$s_{\mathrm{pen}}$ & Selected $\alpha$ & $L^2$-error\\
\midrule
0.25 & $1.08\times10^{-3}$ & $6.70\times10^{-3}$\\
0.40 & $6.59\times10^{-4}$ & $5.07\times10^{-3}$\\
0.55 & $3.55\times10^{-4}$ & $4.24\times10^{-3}$\\
0.70 & $1.49\times10^{-4}$ & $3.90\times10^{-3}$\\
0.85 & $8.05\times10^{-5}$ & $3.87\times10^{-3}$\\
\bottomrule
\end{tabular}
\end{table}

All five reconstructions are visually stable
(Figure~\ref{fig:fractional-order-profiles}), while
Table~\ref{tab:fractional-order} shows the $L^2$-error decreasing
monotonically with $s_{\mathrm{pen}}$, from $6.70\times10^{-3}$ to
$3.87\times10^{-3}$, and nearly plateauing beyond
$s_{\mathrm{pen}}\approx0.7$. The penalty order is thus a genuine
modeling knob: it adjusts the regularization continuously rather than
forcing a choice between the classical first-order penalty and one
fixed fractional order.

\subsection{Comparison with the classical $H_0^1$ penalty}
\label{sec:classical-comparison}

Both methods share the same forward operator, grid, noisy data
($\delta=3\times10^{-3}$), and discrepancy rule; only the penalty
differs. The target is the localized multiscale profile
$u^\dagger(x)=e^{-90(x-0.34)^2}+0.70\,e^{-170(x-0.73)^2}+0.10\sin(9\pi x)$.

\begin{figure}[t]
\centering
\includegraphics[width=0.5\textwidth]{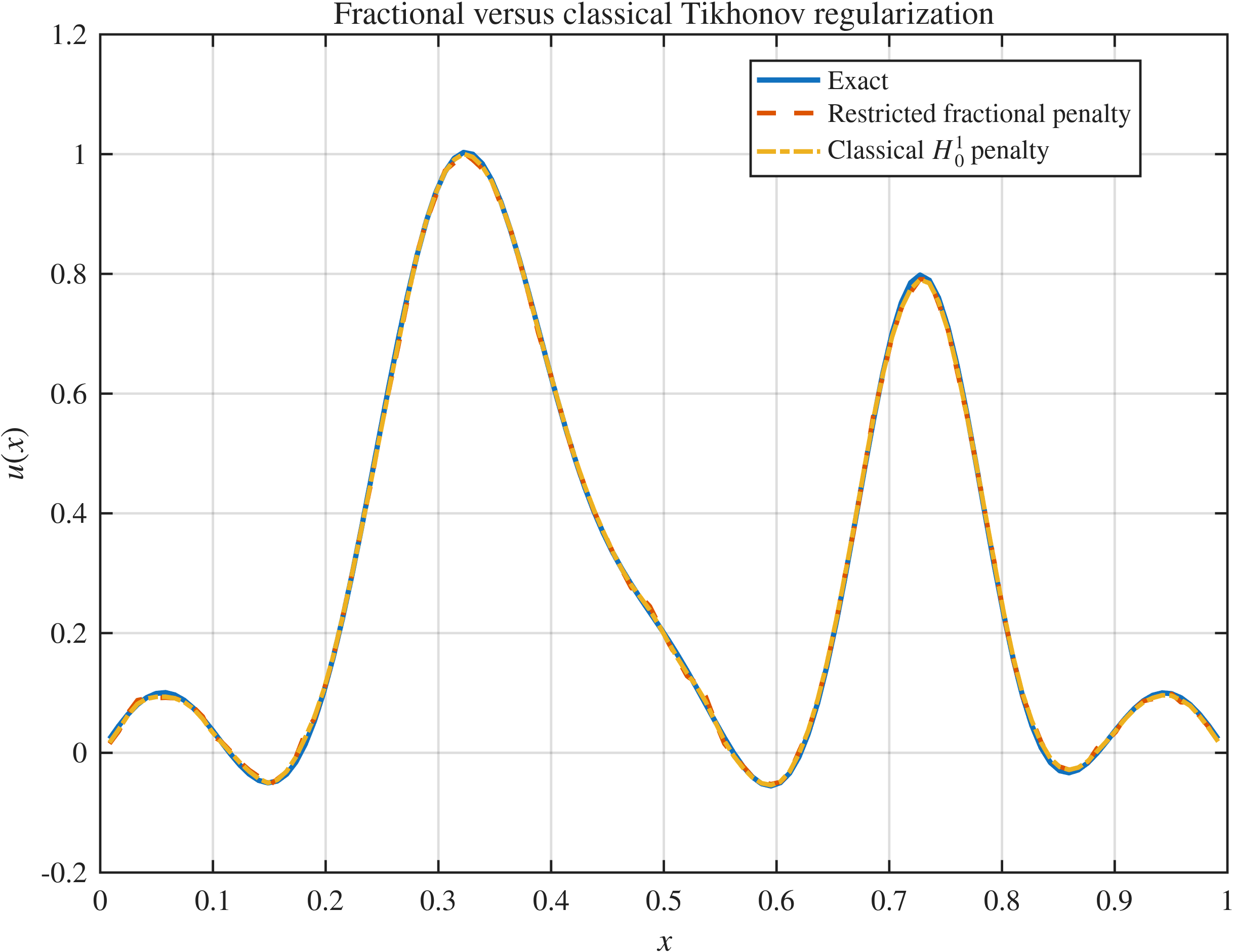}
\caption{Restricted fractional versus classical $H_0^1$ Tikhonov
regularization.}
\label{fig:fractional-classical}
\end{figure}

\begin{table}[t]
\centering
\caption{Restricted fractional versus classical $H_0^1$ regularization.}
\label{tab:classical-comparison}
\begin{tabular}{ccc}
\toprule
Method & Selected $\alpha$ & $L^2$-error\\
\midrule
Restricted fractional ($s=0.55$) & $3.55\times10^{-4}$ & $5.80\times10^{-3}$\\
Classical $H_0^1$ & $1.83\times10^{-5}$ & $4.26\times10^{-3}$\\
\bottomrule
\end{tabular}
\end{table}

Both penalties recover the principal features accurately
(Figure~\ref{fig:fractional-classical}); the classical penalty attains
a somewhat smaller $L^2$-error on this particular smooth-plus-localized
target (Table~\ref{tab:classical-comparison}). We read this not as a
claim of superiority but as evidence that the fractional penalty is
competitive with the classical one while offering an additional,
continuously tunable order $s$ -- an advantage precisely when the
target's regularity is nonlocal or not known a priori.

\subsection{Robustness with respect to the noise level}
\label{sec:noise-robustness}

With all other parameters as in Section~\ref{sec:representative-reconstruction},
$\alpha$ is reselected independently at each
$\delta\in\{10^{-2},5\times10^{-3},2\times10^{-3},10^{-3}\}$.

\begin{figure}[t]
\centering
\begin{subfigure}[t]{0.48\textwidth}
    \centering
    \includegraphics[width=\textwidth]{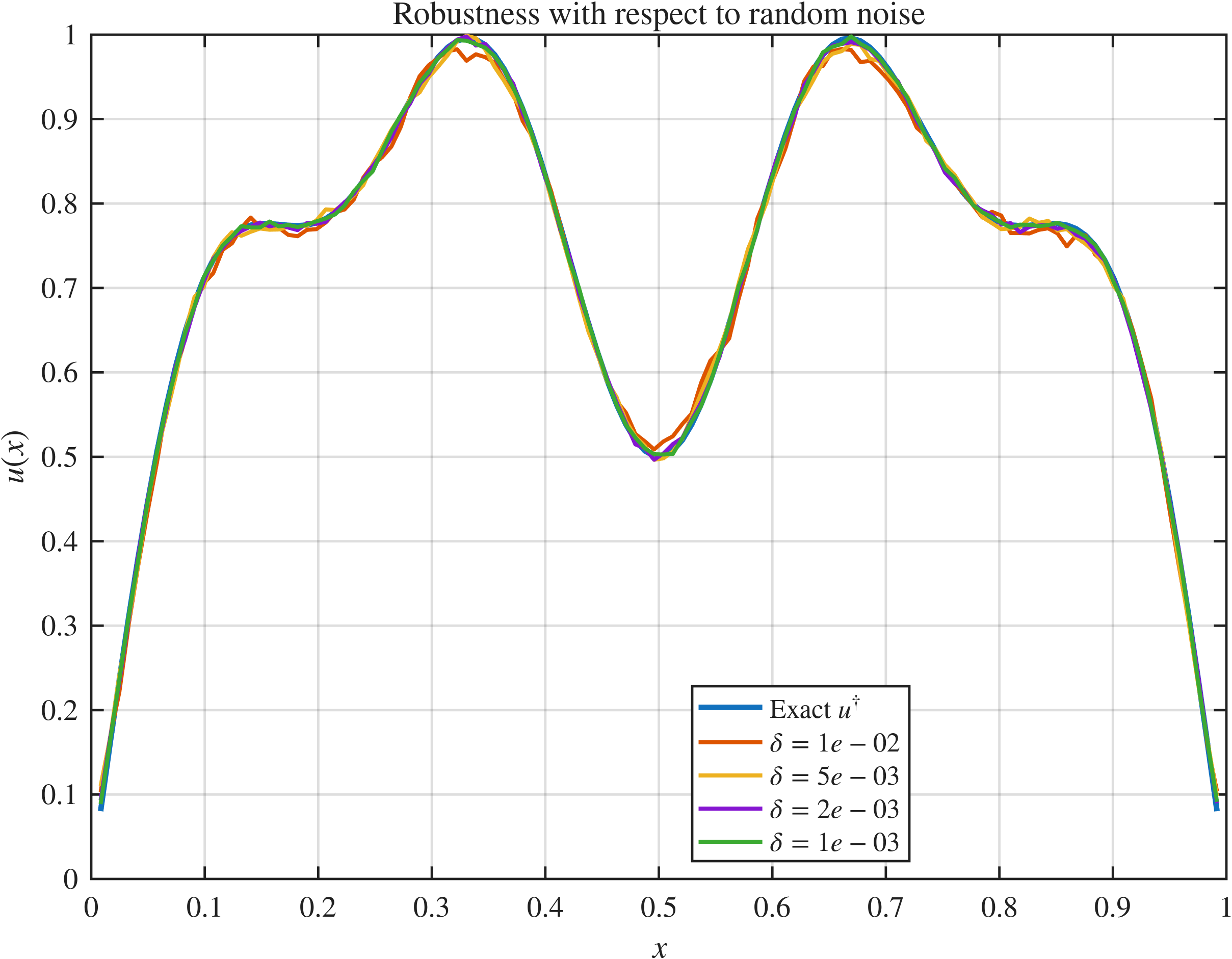}
    \caption{Reconstructions at each noise level.}
    \label{fig:noise-profiles}
\end{subfigure}
\hfill
\begin{subfigure}[t]{0.48\textwidth}
    \centering
    \includegraphics[width=\textwidth]{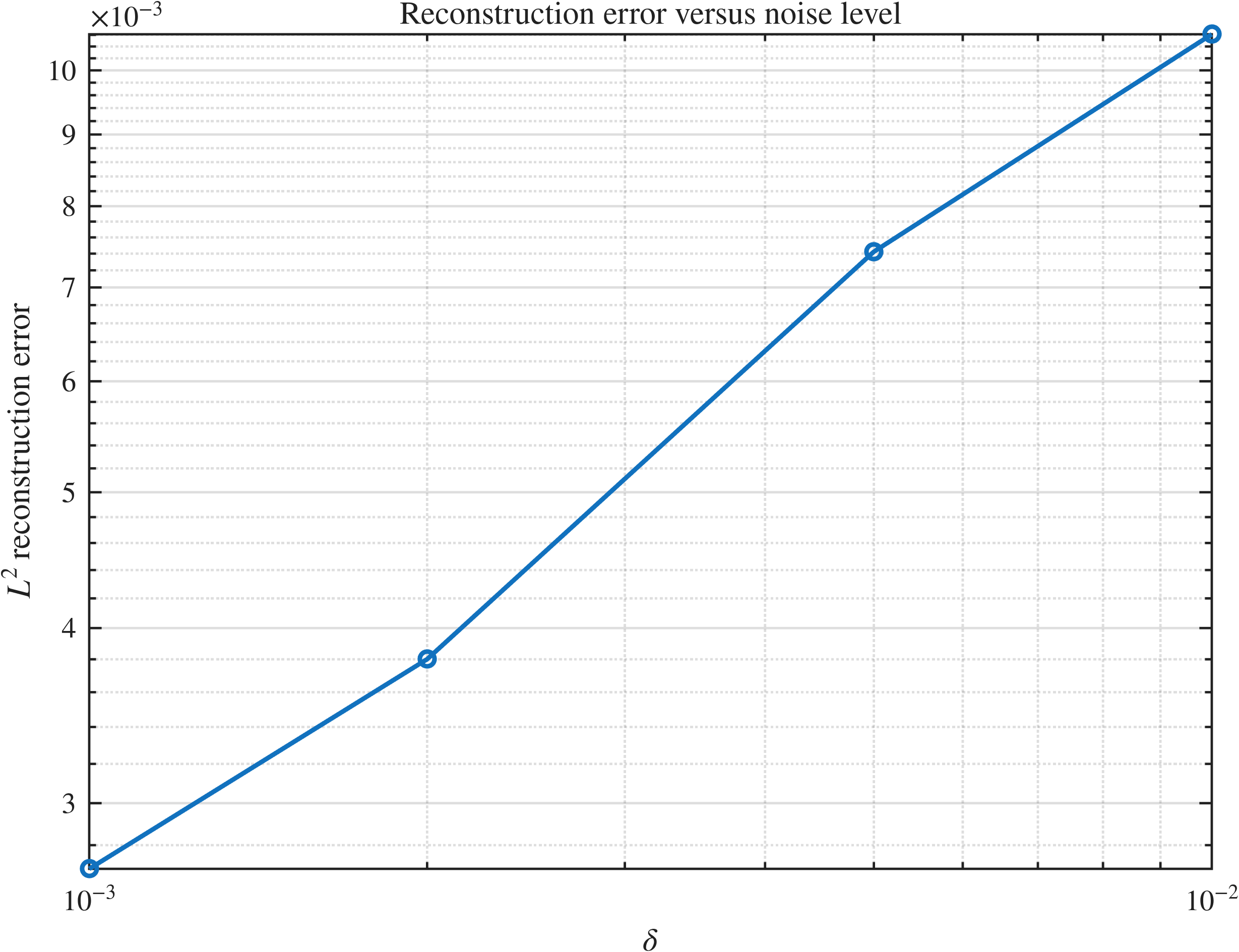}
    \caption{$L^2$-error versus $\delta$.}
    \label{fig:noise-errors}
\end{subfigure}
\caption{Robustness of the reconstruction to the noise level.}
\label{fig:noise-study}
\end{figure}

\begin{table}[t]
\centering
\caption{Reconstruction error versus noise level.}
\label{tab:noise-robustness}
\begin{tabular}{ccc}
\toprule
$\delta$ & Selected $\alpha$ & $L^2$-error\\
\midrule
$1.0\times10^{-2}$ & $1.57\times10^{-3}$ & $1.11\times10^{-2}$\\
$5.0\times10^{-3}$ & $8.44\times10^{-4}$ & $7.60\times10^{-3}$\\
$2.0\times10^{-3}$ & $3.14\times10^{-4}$ & $3.79\times10^{-3}$\\
$1.0\times10^{-3}$ & $1.91\times10^{-4}$ & $2.72\times10^{-3}$\\
\bottomrule
\end{tabular}
\end{table}

The reconstruction degrades gracefully as $\delta$ grows
(Figure~\ref{fig:noise-study}): the error decreases monotonically with
$\delta$ (Table~\ref{tab:noise-robustness}), and the discrepancy
principle automatically selects a smaller $\alpha$ as the data become
more accurate, exactly as the theory of
Section~\ref{sec:statistical-error} predicts.

\subsection{Monte Carlo verification of the convergence rate}
\label{sec:monte-carlo}

Finally, we test the mean-square rate of Theorem~\ref{thm-heat}
directly. We fix $s=0.55$, $t=0.004$, and a source exponent
$\beta=0.75$, for which
\[
\mathbb E\|u_\alpha^\delta-u^\dagger\|_{H_0^s(\Omega)}^2
=O\bigl(\delta^{4\beta/(2\beta+1)}\bigr)=O(\delta^{1.2}).
\]
The exact discrete solution $u^\dagger$ is built so that the discrete
source condition $z^\dagger=(B^*B)^\beta w$ holds exactly in the
generalized eigenbasis, and $\alpha=c_\alpha\delta^{2/(2\beta+1)}$ is
used with $c_\alpha$ calibrated once on an independent pilot noise
level. For each of $8$ noise levels spanning
$\delta\in[10^{-4},10^{-1}]$, the mean-square error is averaged over
$500$ independent realizations.

\begin{figure}[t]
\centering
\includegraphics[width=0.55\textwidth]{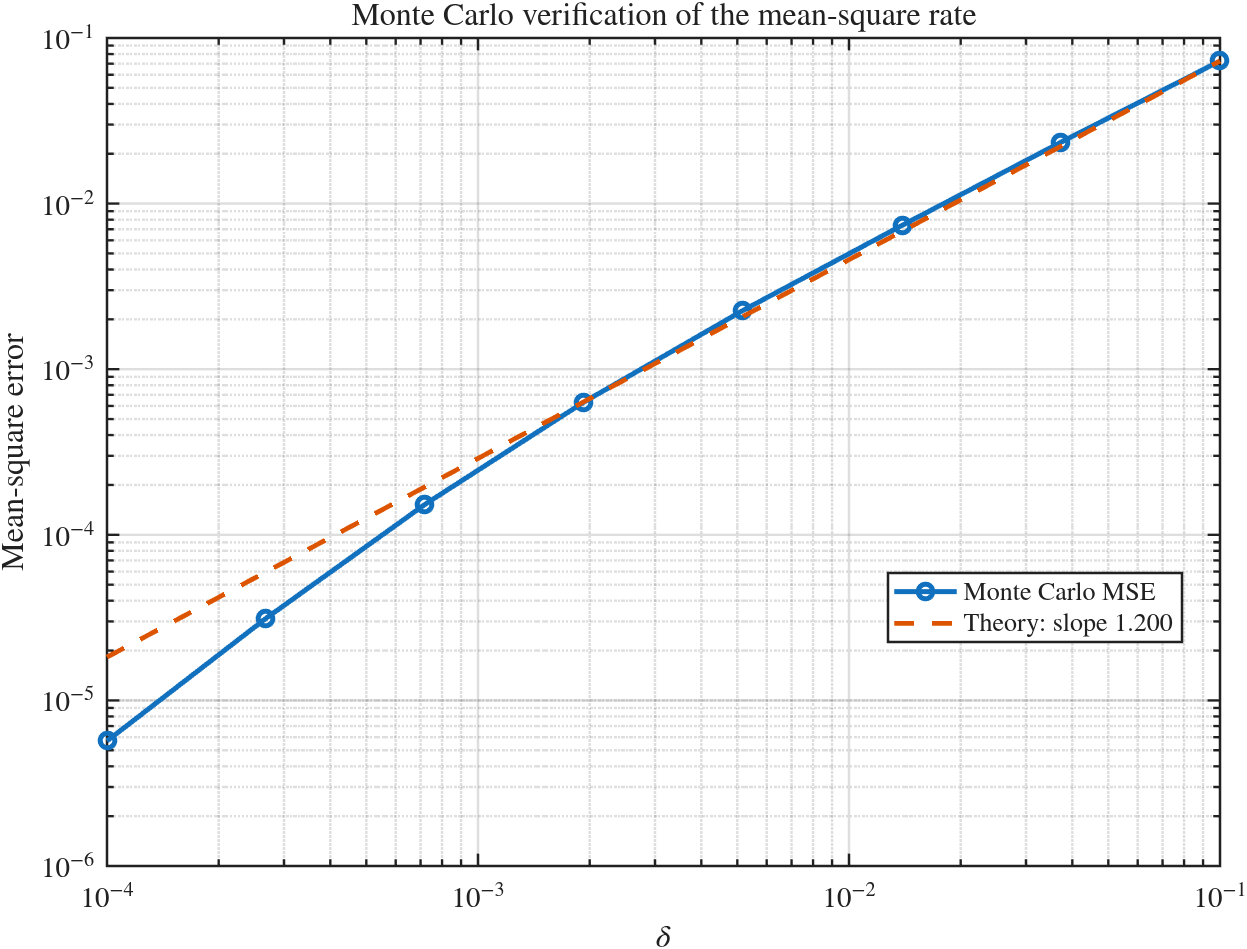}
\caption{Monte Carlo mean-square error versus the theoretical slope
$4\beta/(2\beta{+}1)=1.2$.}
\label{fig:mc-rate}
\end{figure}

\begin{table}[t]
\centering
\small
\setlength{\tabcolsep}{4pt}
\renewcommand{\arraystretch}{0.95}
\caption{Monte Carlo results (500 realizations per noise level). EOC is
the experimental order of convergence between consecutive rows.}
\label{tab:monte-carlo}
\begin{tabular}{ccccc}
\toprule
$\delta$ & $\alpha$ & MSE & Std.\ error & EOC\\
\midrule
$1.0\times10^{-4}$ & $1.12\times10^{-4}$ & $1.08\times10^{-5}$ & $3.71\times10^{-8}$ & --\\
$2.68\times10^{-4}$ & $2.47\times10^{-4}$ & $5.08\times10^{-5}$ & $1.48\times10^{-7}$ & 1.565\\
$7.20\times10^{-4}$ & $5.44\times10^{-4}$ & $1.98\times10^{-4}$ & $4.44\times10^{-7}$ & 1.377\\
$1.93\times10^{-3}$ & $1.20\times10^{-3}$ & $6.55\times10^{-4}$ & $1.36\times10^{-6}$ & 1.215\\
$5.18\times10^{-3}$ & $2.64\times10^{-3}$ & $2.00\times10^{-3}$ & $4.92\times10^{-6}$ & 1.131\\
$1.39\times10^{-2}$ & $5.81\times10^{-3}$ & $6.03\times10^{-3}$ & $2.14\times10^{-5}$ & 1.117\\
$3.73\times10^{-2}$ & $1.28\times10^{-2}$ & $1.80\times10^{-2}$ & $8.79\times10^{-5}$ & 1.111\\
$1.0\times10^{-1}$ & $2.82\times10^{-2}$ & $5.37\times10^{-2}$ & $3.39\times10^{-4}$ & 1.106\\
\bottomrule
\end{tabular}
\end{table}

Figure~\ref{fig:mc-rate} and Table~\ref{tab:monte-carlo} show the MSE
tracking the theoretical slope closely across the full two decades of
$\delta$, with the EOC decreasing monotonically from $1.565$ toward
$1.106$ as $\delta$ grows, bracketing the theoretical value $1.2$ from
both sides. A least-squares fit over the six interior noise levels
(excluding both extremes, where discretization and coarse-noise
effects respectively are most likely to intrude) gives a fitted
exponent $p_{\mathrm{fit}}=1.180$, within $2\%$ of the theoretical rate
$p_{\mathrm{th}}=4\beta/(2\beta+1)=1.200$ -- strong numerical
confirmation of Theorem~\ref{thm-heat} and
Corollary~\ref{cor:optimal_parameter}.

\bigskip
Together, these experiments show that the restricted fractional
Tikhonov method reconstructs accurately and stably, that Morozov's
discrepancy principle is a reliable and fully automatic
parameter-selection rule as Theorem~\ref{thm:discrepancy} predicts, and
that the Monte Carlo results confirm the theoretical mean-square rate
to within a few percent.

\section{Conclusion}\label{sec:conclusion}

We developed a fractional-Sobolev Tikhonov framework for linear inverse
problems based on the restricted Dirichlet fractional energy. The
associated functional is strictly convex, coercive, and weakly lower
semicontinuous, so its minimizer exists uniquely and depends Lipschitz
continuously on the data. The identification
\(D(A_s^{1/2})=H_0^s(\Omega)\), \(A_s=I+(-\Delta)^s\), provides an
isometric passage to a classical Hilbert-space Tikhonov problem,
yielding explicit mean-square error estimates and order-optimal
\emph{a priori} and \emph{a posteriori} parameter rules -- the latter
via Morozov's discrepancy principle, without requiring the
source-condition exponent in advance.

The partial-observation example shows the abstract theory applies
beyond semigroup-generated forward maps. For backward fractional heat
inversion, the identity \(B^*B=A_s^{-1}e^{-2tA_s}\) enables a detailed
spectral analysis: the source condition can be written mode by mode,
the singular values decay exponentially, and the effective number of
recoverable modes grows only polylogarithmically in the inverse noise
level, reflecting an exponential regularity condition on the exact
solution.

After Bourgain--Brezis--Mironescu normalization, the fractional
functionals \(\Gamma\)-converge in strong \(L^2(\Omega)\) to the
classical \(H_0^1\)-Tikhonov functional, with equicoercivity giving
convergence of the corresponding minimizers.

Numerical experiments corroborate the theory quantitatively: a Monte
Carlo study over \(500\) realizations per noise level gives a fitted
mean-square convergence exponent within \(2\%\) of the theoretical
rate \(4\beta/(2\beta+1)\) of Theorem~\ref{thm-heat}, and the
discrepancy principle used throughout the remaining experiments --
rather than an ad hoc heuristic -- is exactly the a posteriori rule of
Theorem~\ref{thm:discrepancy}.

Natural extensions include nonlinear and constrained inverse problems,
a posteriori theory tailored to the matched spectral structure of
Section~\ref{sec:application} (e.g.\ via the Lepskii balancing
principle), variable-order and anisotropic nonlocal penalties, and
quantitative discretization-error analysis for the restricted
fractional operator.

\medskip

\subsection*{Future directions}
The present work establishes a rigorous Hilbert-scale framework for
fractional Tikhonov regularization based on the restricted fractional
Laplacian. Several natural extensions remain open, including nonlinear
inverse problems, more general nonlocal operators, variable-order
fractional models, and adaptive parameter-choice strategies. From the
computational perspective, efficient finite-element discretizations and
fast solvers for large-scale multidimensional problems deserve further
investigation. It would also be of considerable interest to extend the
analysis to statistical and Bayesian inverse problems, thereby providing a
unified framework for deterministic and stochastic fractional
regularization. We believe that the ideas developed here provide a
foundation for these and related problems.

\bigskip


\subsection*{Funding}
This work was supported by the Department of Science and Technology
(DST), Government of India, through the INSPIRE Faculty Award, grant
no.\ IFA21-MA 158.

\subsection*{Data availability}
The simulation code used to generate the numerical results is available from the corresponding author upon reasonable request.

\subsection*{Conflict of interest}
The authors declare that they have no conflict of interest.

\newcommand{\etalchar}[1]{$^{#1}$}

\end{document}